\documentclass[11pt]{amsart}

\usepackage{amsmath,amssymb,amsthm,multicol,mathtools,hyperref,dsfont,pinlabel,enumitem,dsfont}
\usepackage{comment}
\usepackage[usenames,dvipsnames]{xcolor}
\usepackage{tikz-cd}

\newcommand\cut{\setminus\!\setminus}
\newcommand\wt[1]{\scalebox{.9}{$\widetilde{
#1
}$}}
\newcommand\wh[1]{\scalebox{.9}{$\widehat{
#1
}$}}

\newcommand\F{\mathds{F}}

\newcommand\Q{\mathds{Q}}

\newcommand\bbm{\begin{bmatrix}}
\newcommand\ebm{\end{bmatrix}}

\definecolor{myPurple}{cmyk}{.5,.75,0,0}
\definecolor{myBlue}{cmyk}{.5,1,0,0}
\definecolor{myGreen}{cmyk}{1,0,1,0}
\definecolor{myGray}{cmyk}{0,0,0,.5}
\definecolor{myPink}{cmyk}{0,.73,.2,0}
\definecolor{myRed}{cmyk}{0,1,1,0}

\newcommand\White[1]{\color{white}#1\color{black}}
\newcommand\MobPos{\raisebox{-2pt}{\includegraphics[height=11pt]{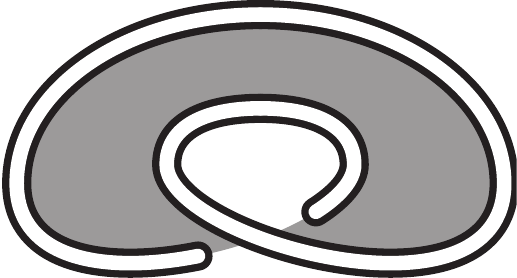}}}
\newcommand\MobNeg{\raisebox{-2pt}{\includegraphics[height=11pt]{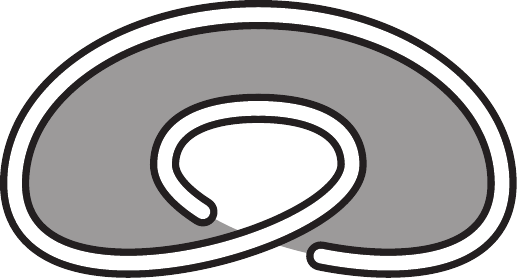}}}

\theoremstyle{plain}
\newtheorem{theorem}{Theorem}[section]

\newtheorem{prop}[theorem]{Proposition}
\newtheorem{cor}[theorem]{Corollary}
\newtheorem{conjecture}[theorem]{Conjecture}

\newtheorem*{P:GeomEss}{Proposition \ref{P:GeomEss}}
\newtheorem*{T:BadPlumb}{Theorem \ref{T:BadPlumb}}
\newtheorem*{T:PlumbEss}{Theorem \ref{T:PlumbEss}}
\newtheorem*{T:CBEss}{Theorem \ref{T:CBEss}}
\newtheorem*{C:StateEss}{Corollary \ref{C:StateEss}}
\newtheorem*{T:Singular}{Theorem \ref{T:Singular}}
\newtheorem*{T:CC}{Theorem \ref{T:CC}}
\newtheorem*{T:canon}{Theorem \ref{T:canon}}
\newtheorem*{T:DeplumbUnique}{Theorem \ref{T:DeplumbUnique}}
\newtheorem*{C:DeplumbUnique1}{Corollary \ref{C:DeplumbUnique1}}
\newtheorem*{C:DeplumbUnique2}{Corollary \ref{C:DeplumbUnique2}}
\newtheorem*{C:EssGraph}{Corollary \ref{C:EssGraph}}
\newtheorem*{T:compressible_attach_annulus}{Theorem \ref{T:compressible_attach_annulus}}
\newtheorem*{C:compressible_attach_annulus}{Corollary \ref{C:compressible_attach_annulus}}
\newtheorem*{P:Compute}{Problem \ref{P:Compute}}

\theoremstyle{definition}

\newtheorem{notation}[theorem]{Notation}
\newtheorem{definition}[theorem]{Definition}
\newtheorem{question}[theorem]{Question}
\newtheorem{problem}[theorem]{Problem}
\newtheorem{example}[theorem]{Example}
\newtheorem{rem}[theorem]{Remark}

\numberwithin{equation}{section}

\begin{document}


\title[How natural is Murasugi sum?]{How natural of a geometric operation is Murasugi sum?}

\author{Thomas Kindred}


\address{
Department of Mathematical Sciences \\ 
Burton Hall 115, Smith College \\
Northampton, MA 01063}
\email{tkindred@smith.edu}


\keywords{}

\begin{abstract}
Gabai proved that any Murasugi sum of $\pi_1$-essential Seifert surfaces is also $\pi_1$-essential, and Ozawa extended this result to unoriented spanning surfaces.
We show, however, that the analogous statement about geometrically essential surfaces is untrue. (A spanning surface is geometrically essential if it cannot be compressed or boundary compressed to another spanning surface.)

We also ask when plumbing an unknotted annulus (with any number of twists) onto a compressible spanning surface yields a compressible surface.  We provide positive and negative examples, and we establish a simple sufficient condition. As an application, we obtain a new constructive proof of a result of Hatcher and Thurston about essential spanning surfaces for 2-bridge knots and links.
\end{abstract}

\maketitle



\section{Introduction}
Many techniques in the study of 3-manifolds $M$, and the knots and links $L\subset M$, involve analysis of the compact surfaces $F$ embedded in $M$---especially those with $\partial F=L$, which are called {\it spanning surfaces}.  Every link in every 3-manifold, however, has infinitely many distinct spanning surfaces.  In order to make the study of these surfaces more tractable, and more likely to yield pertinent information about the link or the manifold that contains it, it is common to restrict one's attention to so-called {\it essential} surfaces.

Two flavors of essential surfaces appear commonly in the literature.  One flavor involves an ``algebraic'' notion of $\pi_1$-essentiality, defined in terms of maps on fundamental groups. The other flavor involves ``geometric'' notions that can be understood in terms of surgery operations. See Definitions \ref{D:GeomEss} and \ref{D:AlgEss}.  The loop theorem implies that these notions coincide for 2-sided surfaces, including Seifert surfaces for links in $S^3$, but in general not every geometrically essential spanning surface is $\pi_1$-essential.

We find that the distinction between these two notions of essentiality is particularly salient when studying Murasugi sums and their effects on spanning surfaces in $S^3$. 
Murasugi sum, or generalized plumbing, is a way of gluing two spanning surfaces $F_0$ and $F_1$ along a disk $U$ to obtain another spanning surface $F=F_0*F_1$ (there is one extra condition---see Definition \ref{D:Plumb}). 
Gabai proved that oriented Murasugi sum respects several geometric properties of Seifert surfaces, including the (unified, in the case of Seifert surfaces)
 property of being essential, establishing the mantra, ``Murasugi sum is a natural geometric operation'' \cite{gab1,gab2}.  Ozawa proved more generally that any Murasugi sum of $\pi_1$-essential spanning surfaces (orientable or nonorientable) is also $\pi_1$-essential \cite{ozawa11}. 
%
Our main result shows that Gabai's mantra, and Ozawa's theorem, do not extend from $\pi_1$-essential surfaces to geometrically essential ones.  

 \begin{T:BadPlumb} 
 A Murasugi sum of geometrically essential spanning surfaces need not be geometrically essential. 
 \end{T:BadPlumb}

Figure \ref{Fi:BadPlumb} shows an example of this phenomenon. 

We also provide several related examples and results. In particular,
Example \ref{Ex:GabaiConverse} describes a Murasugi sum $F=F_0*F_1$ in which both $F_0$ and $F_1$ are compressible (hence inessential in both senses) but $F$ is essential (in both senses), and
Example \ref{Ex:FalseConverse} describes how a subtle modification of that example yields a surface $F$ that is, perhaps surprisingly, compressible.
 Theorem \ref{T:compressible_attach_annulus} then characterizes the subtlety captured in those two examples.
\begin{T:compressible_attach_annulus}
Suppose $D$ is a compressing disk for a spanning surface $F_1$, $\alpha$ is a properly embedded arc in $F_1$ with $|\alpha\cap \partial D|=1$, and $F_2$ is an unknotted annulus with any number of twists. Then any surface $F$ obtained by plumbing $F_2$ onto $F_1$ along $\alpha$ is compressible---this is true for algebraic and geometric compressibility.

Moreover, if $F=F_1*F_2$ as above, $\gamma$ is a core curve of $F_2$, and $\alpha'$ is a properly embedded arc in $F_2$ that is disjoint from $F_1$ with $|\alpha'\pitchfork\gamma|=1$, then $F$ has a compressing disk $X$ with $|\alpha'\pitchfork\partial X|=1$.
\end{T:compressible_attach_annulus}

See Section \ref{S:Background}, especially Definition \ref{D:AlgEss} and Example \ref{Ex:Geometrically_Incompressible}, for details regarding the terminology in Theorem \ref{T:compressible_attach_annulus}. 

As a corollary to Theorem \ref{T:compressible_attach_annulus}, we obtain a new constructive proof of a result due, essentially, to Hatcher and Thurston \cite{ht}.

\begin{C:compressible_attach_annulus}[Theorem 1(c) of \cite{ht}]
Consider a spanning surface $F=F_1*\cdots*F_k$ for a 2-bridge knot or link $L_{p/q}$ constructed from a continued fraction expansion $\frac{p}{q}=a+[b_1,\dots,b_k]$. The following are equivalent:
\begin{enumerate}[label=(\arabic*)]
\item\label{case:1} $F$ is $\pi_1$-essential. 
\item\label{case:2} $F$ is geometrically essential.
\item\label{case:3} Each $|b_i|\geq 2$.
\end{enumerate}
\end{C:compressible_attach_annulus}

In \cite{ht}, Hatcher and Thurston prove that conditions \ref{case:2} and \ref{case:3} are equivalent, using an induction argument on paths in the Farey graph, and they assert on page 226 that conditions \ref{case:1} and \ref{case:2} are equivalent.  They do not, however, provide an explicit proof of the latter equivalence, so in this sense Corollary \ref{C:compressible_attach_annulus} fills a gap in the prior literature.  Our new constructive proof is also noteworthy in light of Example \ref{Ex:432}, which shows that compressing disks for certain Hatcher-Thurston surfaces can be exceedingly complicated. We note that Corollary \ref{C:compressible_attach_annulus} is also finding immediate utility in \cite{cklsv2} and \cite{cklsv3}.

We emphasize the explicitly constructive nature of the results and techniques employed throughout.  In particular, the success that these techniques find in this paper provides motivation for the following (see \textsection\ref{S:Caps} for details):

\begin{P:Compute}
Write a computer program that, given as input a positive integer $n$ and a link diagram $P$ (described by, say, a Gauss code or a PD code), determines whether or not either checkerboard surface from $P$ has a compressing disk or $\partial$-compressing disk in which all subdisks have height at most $n$.  Then use the resulting data to catalog essential checkerboard surfaces by crossing number.
\end{P:Compute}

This paper is organized as follows.  In \textsection \ref{S:Background}, we discuss algebraic and geometric notions of essential spanning surfaces, and we review Murasugi sums, providing a thorough survey of many ways this operation has been applied in the literature. In \textsection\ref{S:Caps}, we introduce our main techniques for later proofs, and we give several warm-up examples. In \textsection\ref{S:BadPlumb}, we prove our main result, Theorem \ref{T:BadPlumb}---most of the work consists of proving the related Proposition \ref{P:GeomEss}. In \textsection\ref{S:Final}, we consider the question of when plumbing an essential annulus 
onto a compressible surface may yield an essential surface; this culminates with Theorem \ref{T:compressible_attach_annulus} and our new constructive proof of Corollary \ref{C:compressible_attach_annulus}. The Appendix provides further context and detailed discussion around our definition of geometrically essential spanning surfaces.

\section{Background}\label{S:Background}

In this section, we review spanning surfaces, algebraic and geometric notions of essential surfaces, and Murasugi sums.

\subsection{Spanning surfaces}

\begin{definition}\label{D:SpanningSurface}
A {\bf spanning surface} $F$ for a link $L$ is a compact surface, orientable or nonorientable, with no closed components which is (simultaneously) embedded with $\partial F=L$. An oriented spanning surface is called a {\bf Seifert surface}. We regard two spanning surfaces $F$ and $F'$ as equivalent, denoted $F\approx F'$, if they are {\bf freely isotopic}, meaning that there is an ambient isotopy of $S^3$ that takes $F$ to $F'$.
\end{definition}

\begin{notation}\label{N:Span}
Throughout, given a submanifold $S$ of an some ambient manifold, $\nu S$ will denote a closed regular neighborhood, $\mathring{\nu}S$ will denote the interior of this neighborhood, and $\mathring{S}$ will denote the interior of $S$.  The notations $F$ and $L$ will always denote a spanning surface in $S^3$ and its boundary.
\end{notation}

\begin{rem}\label{R:Exterior}
It is common to think of a spanning surface $F$ for a link $L\subset S^3$ as being properly embedded in the link exterior $S^3\setminus\mathring{\nu} L$, so that $\partial F$ intersects each meridian on $\partial\nu L$ transversally in one point. In this paper, however, (until the appendix) we will find it more convenient to take the perspective of Definition \ref{D:SpanningSurface}.
\end{rem}

\begin{figure}[h]
\begin{center}
{\includegraphics[height=1in]{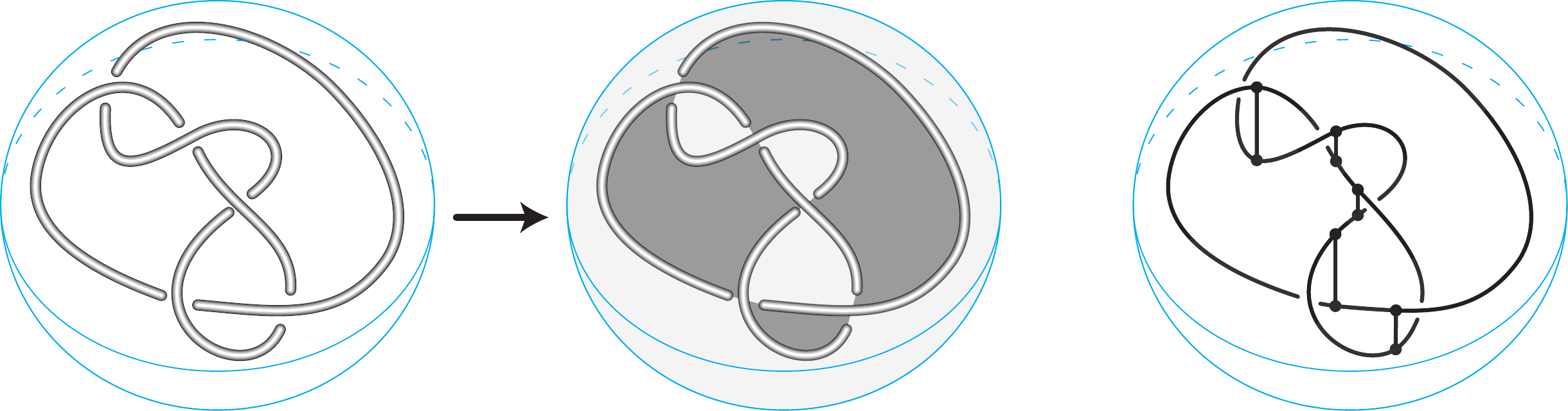}}
\caption{Constructing checkerboard surfaces}
\label{Fi:Chessboard}
\end{center}
\end{figure}

\begin{definition}
Given a diagram $P$ of $L$, one can construct two spanning surfaces $B$ and $W$ by coloring the regions of $S^2\setminus P$ black and white in checkerboard fashion. The interiors of these {\bf checkerboard surfaces} $B$ and $W$ intersect in {\it vertical arcs} which project to the crossings of $P$. 
\end{definition}

Figure \ref{Fi:Chessboard} shows this construction and the spatial graph $B\cap W$ comprised of $L$ and the vertical arcs at the crossings. The construction generalizes as follows:

\begin{definition}
Given a diagram $P$ of a link $L$, smoothing each crossing  in one of two ways, $\raisebox{-.02in}{\includegraphics[width=.125in]{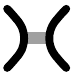}}
\overset{\color{Gray}{_{{A}}}\color{black}}{\longleftarrow}\raisebox{-.02in}{\includegraphics[width=.125in]{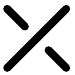}}
\overset{_{{B}}}{\longrightarrow}\raisebox{-.02in}{\includegraphics[width=.125in]{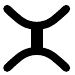}}$
yields a {\it state} $x$ of $P$, comprised of {\it state circles} and \color{Gray}$A$\color{black}- and $B$-labeled edges. One constructs an associated {\it state surface}  $F_x$ by capping off the state circles with mutually disjoint {\it state disks} (that are also disjoint from crossings and transverse to the projection sphere) and attaching a half-twisted band at each crossing\cite{ozawa11,ak}. %
The isotopy class of $F_x$ may depend on the {\it layering} of the disks relative to the projection sphere; to avoid such ambiguity, we assume, unless stated otherwise, that all state circles are capped with disks {\it lying entirely on the same side} of the projection sphere $S^2$. 
\end{definition}

\begin{figure}[h]
\begin{center}
{\includegraphics[width={4.5in}]{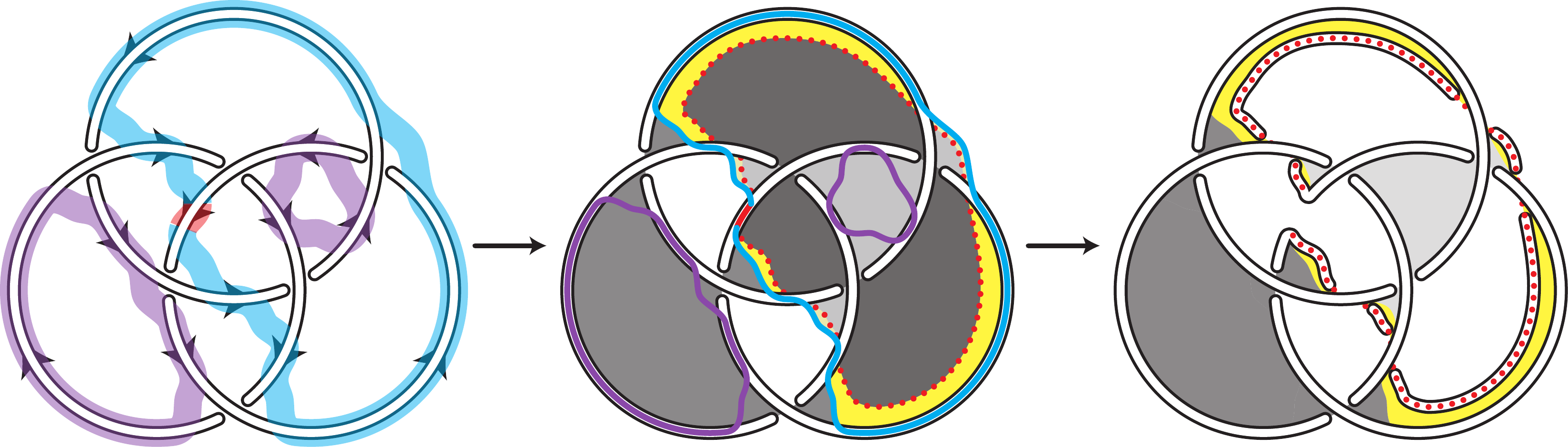}}
\caption{Realizing a Seifert surface for the Borromean rings as a checkerboard surface}
\label{Fi:StateToCBEx}
\end{center}
\end{figure}

\begin{prop}[Proposition 1.3.1 of \cite{TkThesis}]\label{P:StateToCB}
Any state surface $F_x$ from any diagram $P$ is (freely) isotopic to a checkerboard surface of some diagram $P'$.
\end{prop}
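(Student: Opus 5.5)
The plan is to isotope $F_x$ so that all state disks lie in a single sphere, in such a way that the whole surface becomes the ``black'' surface of a new diagram $P'$. The key observation is this: a checkerboard surface is exactly a collection of disjoint disks, namely the shaded regions of the projection sphere, joined by half-twisted bands at crossings. $F_x$ is already of this form, except that its state disks are nested rather than disjoint in the projection. By assumption all state disks lie on the same side of $S^2$, say above it. Each state circle $c$ bounds a disk $D_c$, and the disks are stacked according to the nesting of the state circles in $S^2$. We will remove this nesting one disk at a time, inducting on the number of nested pairs of state circles.

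\emph{Step 1 (innermost case).} If no state circle lies inside another, push each state disk down onto its region of $S^2$. The state disks then become disjoint regions of the projection sphere, joined by half-twisted bands at the crossings. Color these regions black. The complementary regions are white, since every band meets two black regions at a crossing in checkerboard fashion. So $F_x$ is the black checkerboard surface of $P$ itself, perhaps after isotoping the crossing bands to the standard picture. Some of these disks may have been adjacent to no crossing; such a disk is an isolated component of $F_x$, which is a disk spanning an unlinked split unknot and is already a checkerboard region.

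\emph{Step 2 (removing nesting).} Suppose some state circle $c$ contains other state circles inside it. Choose $c$ to be outermost among circles whose interiors contain other circles. Take the disk $D_c$ together with everything attached inside it, and isotope this whole sub-configuration by swinging it over the point at infinity of $S^2$; equivalently, flip it to the outside using a rotation of the sphere. The effect on the diagram is that the circles formerly inside $c$ now lie outside $c$, while $c$ itself bounds its disk on the complementary side. Concretely, we replace $D_c$ by the disk in $S^2$ bounded by $c$ on its other side. Since all state disks lie above $S^2$, the disk $D_c$ and the disk $S^2\setminus \mathring D_c$ pushed up slightly are isotopic rel $c$ in the complement of the other state disks. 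This holds provided no other state disk separates them, i.e.\ provided the circles on the far side of $c$ are handled in the right order. The bands at the crossings along $c$ simply follow along, and the projection changes by a planar isotopy together with moves across infinity. The result is a new diagram $P''$ with a state $x''$ with strictly fewer nested pairs, and $F_{x''}\approx F_x$. The example of the Borromean rings Seifert surface in Figure~\ref{Fi:StateToCBEx} illustrates exactly this move.

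\emph{Main obstacle.} The delicate step is making the inversion in Step 2 honest. We must ensure that the region chosen for $c$'s disk contains no other state disks stacked above it, so that the isotopy misses the rest of $F_x$. We must also ensure that the nesting count genuinely drops. I would handle this by working on $S^2$ with a chosen base point $\infty$ and choosing $c$ so that one side of $c$ contains no state circles. Then the disk on that side can be used without obstruction. Since $S^2$ is a sphere, every circle bounds an empty side precisely when it is innermost relative to one side. This suggests reorganizing the induction as a statement about the dual tree of state circles on $S^2$: we want a choice of sides making all disks pairwise disjoint. That is always possible by choosing, for each circle, the side facing a fixed leaf... but actually the cleaner route is to relocate disks vertically in layers. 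Repeating Step 2 until no nesting remains reduces to Step 1 and yields the diagram $P'$.
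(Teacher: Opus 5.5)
Your Step 2 has a genuine gap. Replacing $D_c$ by the disk on the other side of $c$, together with planar isotopy and moves across $\infty$, never changes the state circles as curves on $S^2$. It only re-chooses which of the two disks bounded by each fixed circle serves as its state disk, and that cannot succeed in general.

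For the state disks to become regions of a single diagram, the chosen sides $S_c$ must have pairwise disjoint interiors. So no $S_c$ may contain another state circle $d$, since both sides of $d$ would then meet $\mathring{S}_c$. Hence every state circle would need a side containing no other state circles; equivalently, the dual tree of state circles would have to be a star.

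Three nested state circles violate this, because the middle one has state circles on both sides. This is exactly the Borromean example you cite. For the standard diagram with all components oriented counterclockwise, the Seifert circles are the boundaries of $\{w\ge 1\}$, $\{w\ge 2\}$, $\{w\ge 3\}$, where $w$ is the winding number. In your procedure, once the outer disk has been swung over $\infty$, any disk on the far side of the middle circle must either pass over the swung disk or cut through it. So the isotopy ``rel $c$ in the complement of the other state disks'' does not exist, and the nesting can never be fully removed. Your claim that a disjoint choice of sides ``is always possible'' is therefore false. Stacking disks vertically in layers does not change their projections, so it cannot remove nesting either.

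The missing idea is that the diagram must change by acquiring new crossings, not merely by an isotopy of $S^2$. The paper removes one non-innermost state disk at a time, as follows.
\begin{itemize}
\item Take such a circle $x_0$ with state disk $U$, and choose an arc $\alpha\subset x_0\cap L$.
\item Push $\beta=x_0\setminus\mathring{\alpha}$, rel endpoints, slightly into $\mathring{U}$ to an arc $\beta'$ whose projection meets $P$ generically. Then $\beta\cup\beta'$ bounds a thin bigon $U_0\subset U$.
\item Isotope $U$ vertically, fixing $\partial U$, so that $U_0$ becomes disks in $S^2$ joined by half-twisted bands at the points where $\beta'$ crosses $P$.
\item Slide $\alpha$ across $U\setminus\mathring{U}_0$ to $\beta'$.
\end{itemize}
This is an honest isotopy of $F_x$. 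It produces a state surface of a new diagram with one fewer non-innermost state disk, and induction on that number ends in your Step 1.
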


\begin{proof}
For every state circle of $x$ that is innermost on the projection sphere $S^2$, push its state disk into $S^2$.
If every state disk of $F_x$ is innermost, lying in  $S^2$, then  $F_x$ is now a checkerboard surface.  Otherwise, choose a state circle $x_0$ in $x$ that is not innermost on $S^2$, and let $U$ denote its state disk. See Figures \ref{Fi:StateToCBEx} and \ref{Fi:StateToCB}). Choose an arc $\alpha\subset x_0\cap L$ (solid red in the figures).  Take the arc $\beta=x_0\setminus\text{int}(\alpha)$ (light blue) and, fixing its endpoints, push it slightly into $\text{int}(U)$ such that its projection to $S^2$ intersects $P$ generically; call the result $\beta'$ (dotted red).  Now $\beta\cup\beta'$ bounds a bigon $U_0$ (yellow) in $U$. While fixing $\partial U$, isotope $U$ vertically so that $U_0$ becomes a union of disks in $S^2$ and half-twist bands near the crossings between $\beta'$ and $P$. Finally, isotope $\alpha$ through $U\setminus\text{int}(U_0)$ (darkest gray) to $\beta'$. The result is a state surface (for a new diagram) with one fewer non-innermost state disk than $F_x$ had.  Repeat until every state disk is innermost.
\end{proof}

\begin{figure}[h]
\begin{center}
{\includegraphics[width={4.5in}]{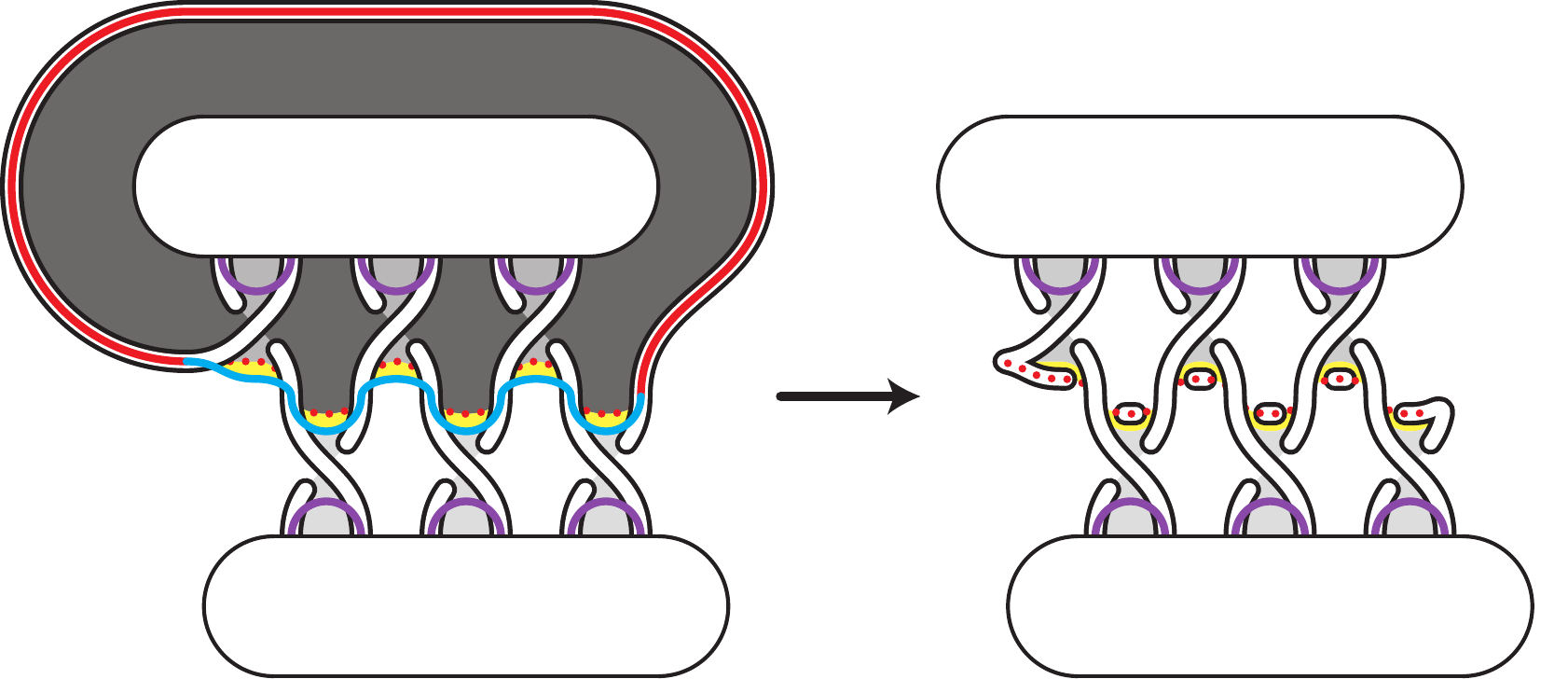}}
\caption{State surfaces are isotopic to checkerboard surfaces.}
\label{Fi:StateToCB}
\end{center}
\end{figure}

One may cut $S^3$ along a spanning surface $F$ to obtain a compact 3-manifold $S^3\cut F$ with boundary. Formally, this space is the metric closure of $S^3\setminus F$, where points are equivalence classes of Cauchy sequences that are identified if their interpolation converges. It is homeomorphic to $S^3\setminus\mathring{\nu}F$, but with extra structure from $F$ and $L$ encoded in its boundary. When $F$ is orientable, $S^3\cut F$ is a {\it sutured manifold}, and the extra structure on its boundary is a copy of $L$, which cuts $\partial(S^3\cut F)$ into two copies of $F$. When $F$ is nonorientable, however, this copy of $L$ does not separate $\partial (S^3\cut F)$, so $S^3\cut F$ is not quite a sutured manifold.  Nevertheless, this perspective will often prove useful, especially for careful statements of definitions, so we find it worthwhile to establish the following notation.   

\begin{notation}\label{N:h_F}
Given a spanning surface $F\subset S^3$, write $ h_F:S^3\cut F\to S^3$ for the quotient map that reglues corresponding pairs of points from $\mathring{F}$ in $\partial(S^3\cut F)$. Write $\wt{L}={ h_F}^{-1}(L)\subset\partial(S^3\cut F)$ and $\wt{F}= h_F^{-1}(\mathring{F})=\partial(S^3\cut F)\setminus \wt{L}$, so that  $h_F$ restricts to a homeomorphism $S^3\cut F\setminus\wt{F}\to S^3\setminus\mathring{F}$ and to a 2:1 covering map $\wt{F}\to\mathring{F}$.
\end{notation}

\subsection{Geometrically and algebraically essential surfaces}

There are two common notions of essentiality  for properly embedded surfaces in a 3-manifold: one is ``geometric,'' motivated by surgery interpretations, while the other is ``algebraic," captured by properties of the fundamental group. Both the algebraic and geometric notions of essentiality involve notions of ``incompressibility'' with ``$\partial$-incompressibility''.  When one is specifically interested in spanning surfaces for links in a 3-manifold, and not in properly embedded surfaces more generally, it makes sense to tweak the geometric notion of $\partial$-compressibility accordingly.  See \textsection1 of \cite{ht}, \textsection2 of \cite{ak}, or the Appendix of this paper for further discussion.  

\begin{figure}[h]
\begin{center}
\includegraphics[height=.75in]{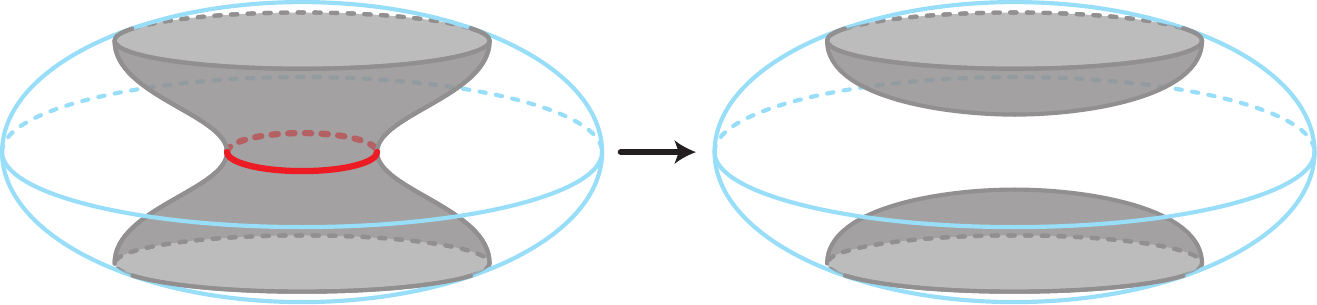}\\
\includegraphics[height=.75in]{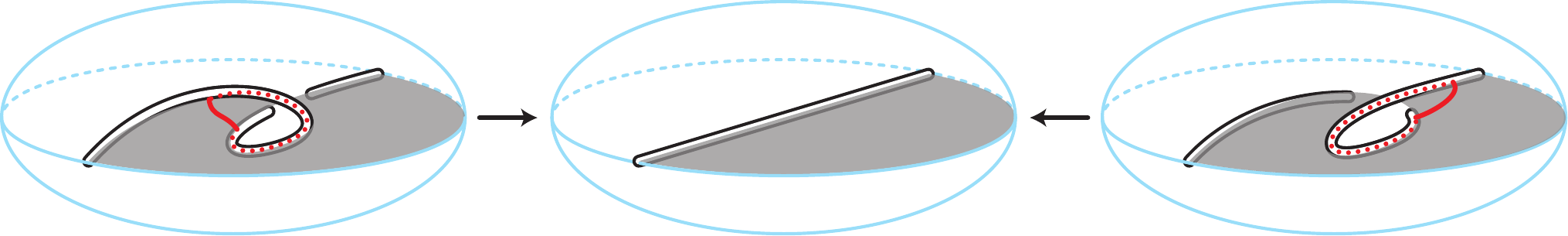}
\caption{Geometric compression (top) and $\partial$-compression (bottom) of a spanning surface}
\label{Fi:Compressions}
\end{center}
\end{figure}

\begin{definition}\label{D:GeomEss}
A surface $F$ spanning a link $L$ is {\bf geometrically essential} if $F$ cannot be compressed or ${\partial}$-compressed to a spanning surface (see Figure \ref{Fi:Compressions}).
That is, $F$ is geometrically essential if {\it both}:
\begin{enumerate}
\item For every embedded disk $D\subset S^3\setminus L$ with $D\cap F=\partial D$, the circle $\partial D$ bounds a disk in $F$, and\footnote{We use ``circle" as shorthand for ``simple closed curve". A circle in a surface is {\it essential} if it does not bound a disk in that surface.}
\item For every embedded disk $D\subset S^3\setminus L$ with $\partial D=\alpha\cup\beta$ for arcs $\alpha\subset F$ and $\beta\subset L$, the arc $\alpha$ is $\partial$-parallel in $F$.
\end{enumerate}
If $F$ satisfies (1), it is called {\bf geometrically incompressible}, whether or not it satisfies (2). Otherwise, there is an essential circle in $F$ that bounds a disk $D\subset S^3\setminus L$ with $D\cap F=\partial D$, called a (geometric) {\bf compressing disk} for $F$.
\end{definition}


\begin{rem}
If $F$ is geometrically incompressible but geometrically inessential, then $F$ is meridianally $\partial$-compressible, as defined in \cite{ak}. An equivalent condition is that $F$ is isotopic to $F'\natural\MobPos$ or $F'\natural\MobNeg$ for some spanning surface $F'$. See the Appendix.
\end{rem}

\begin{definition}\label{D:AlgEss}
A surface $F$ spanning a link $L$ is {\bf $\boldsymbol{\pi_1}$-essential} if :
\begin{enumerate}
\item Inclusion ${\text{int}(F)\hookrightarrow S^3\setminus L}$ induces an injection of fundamental groups, and 
\item $F$ is not a M\"obius band spanning the unknot, $F\not\approx\MobPos,\MobNeg$.
\end{enumerate}
If $F$ satisfies (1), it is called {\bf $\pi_1$-injective}, whether or not it satisfies (2).
\end{definition}

\begin{rem}
If $F$ is ${\pi_1}$-essential, {then} $F$ is geometrically essential.  
\end{rem}

\begin{rem}
Following \textsection1 of \cite{ht}, and assuming that $L$ is non-split, we note that $F$ is $\pi_1$-injective if and only if $\partial\nu F\cut\nu L$ is incompressible and $\partial$-incompressible in $S^3\cut\nu L$. Indeed, the loop theorem implies that $F$ is ${\pi_1}$-injective if and only if $\partial\nu F\cut\nu L$ is incompressible in $S^3\cut\nu L$. 
Further, if 
$F$ is $\partial\nu F\cut\nu L$ is incompressible  but $\partial$-compressible in $S^3\cut\nu L$, then $\partial\nu F\cut\nu L$ is a $\partial$-parallel annulus, and therefore $F=\MobPos,\MobNeg$.

It follows in particular that a Seifert surface is ${\pi_1}$-essential if and only if it is geometrically incompressible.  
\end{rem}

\begin{figure}[h]
\begin{center}
\labellist \small\hair 4pt
\pinlabel {$F_0$} at 425 46
\endlabellist
\;\hfill
{\includegraphics[width=
1.8in]{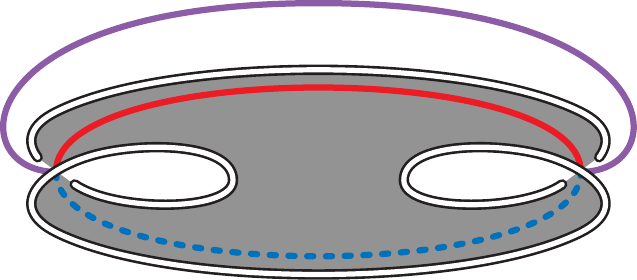}\hfill
\includegraphics[width=
1.8in
]{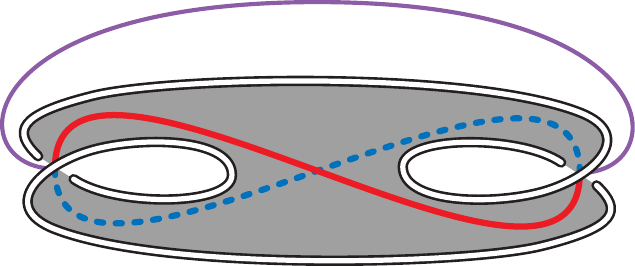}}
\hfill\;
\caption{Left: A geometrically compressible surface. Right: A geometrically incompressible surface $F_0$ which admits an algebraic compressing disk $D$: $\partial D$ is red and blue, and $\text{int}(D)\cap S^2$ is purple.}
\label{Fi:Ex1}
\end{center}
\end{figure}

\begin{example}\label{Ex:Geometrically_Incompressible}
Figure \ref{Fi:Ex1}, left, shows a surface $F_1$ which is geometrically compressible. The arcs $\alpha$ (red) and $\beta$ (blue) together bound a compressing disk $D$ which intersects the projection sphere $S^2$ in an arc $\delta$ (purple); $\delta$ thus cuts $D$ into two disks, one above $S^2$ whose boundary also includes $\alpha$ and one below $S^2$ whose boundary also includes $\beta$. Many figures in this paper use red, blue, and purple in this way; note that $\alpha$ is solid whereas $\beta$ is dashed because $D$ attaches to the front of $F$ along $\alpha$ and to the back of $F$ along $\beta$.

Figure \ref{Fi:Ex1}, right, shows a surface $F_0$ which is geometrically incompressible  but not $\pi_1$-injective. Indeed, if $F_0$ admitted a geometric compression, then the resulting surface would be a disk with the same nonzero boundary slope as $F_0$; meanwhile, the red and blue arcs on $F_0$ 
form a loop in the kernel of the induced map on fundamental groups. 

Recalling Notation \ref{N:h_F}, if one thinks of this loop as lying on $\wt{F_0}$, bounding a compressing disk $\wt{D}$ in $S^3\cut {F_0}$ for $\wt{F_0}$, then $D= h_{F_0}(\wt{D})\subset S^3$ is a disk whose interior is embedded in $S^3\setminus F_0$, and whose boundary is immersed (in this case with non-removable self-intersections). 
\end{example}

\begin{rem}
The loop theorem implies that every $\pi_1$-non-injective spanning surface $F$ admits a disk $D$ constructed in the same manner as the disk $D$ in Example \ref{Ex:Geometrically_Incompressible}. We call such $D$ an {\bf algebraic compressing disk}. It is just like a typical compressing disk, except that $\partial D$ is immersed in $\mathring{F}$ and may (or may not) self-intersect.
\end{rem}

Both examples in Figure \ref{Fi:Ex1} show that boundary sum does not respect either version of incompressibility.  In both examples, however, both boundary summands are inessential.  Later, we will modify the example shown right in the figure to make it more interesting---see Proposition \ref{P:GeomEss}.

\subsection{Murasugi sum}\label{S:Plumb}


\begin{definition}\label{D:Plumb}
Let $V\subset S^3$ be an embedded disk with $V\cap F=\partial V$ such that
\begin{enumerate}
\item $\partial V$ bounds a disk $U\subset F$.
\item Denoting $S^3\cut(U\cup V)=B_0\sqcup B_1$, neither $F_i=F\cap B_i$ is a disk.
\end{enumerate}
Then $U$ is a {{\bf plumbing disk}} for $F$, and $V$ is an associated {{\bf plumbing cap}}. 

Say that $F$ is the {\bf Murasugi sum} of $F_0$ and $F_1$ along $U$, and write $F_0*F_1=F$.  This operation 
is also called {\bf generalized plumbing}, although it is often convenient for linguistic reasons to use various forms of the phrasing ``plumbing'' more liberally, provided this does not lead to confusion. For example, we may call the associated decomposition a {\bf deplumbing} (see Figure \ref{Fi:Plumb}), and we call the operation $F\to F'=(F\setminus U)\cup V$ {\bf replumbing} (see Figure \ref{Fi:Replumb}).
\end{definition}

\begin{figure}[h]
\begin{center}
\labellist
\pinlabel{$=$} at 165 30
\pinlabel{$*$} at 325 30
\endlabellist
\includegraphics[width={4.5in}
]{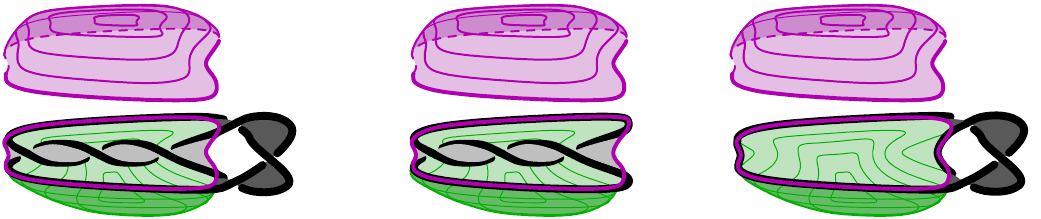}
\caption{Decomposing under Murasugi sum, or deplumbing}
\label{Fi:Plumb}
\end{center}
\end{figure}

\begin{figure}[h]
\begin{center}
\labellist
\pinlabel{$\longrightarrow$} at 185 35
\endlabellist
\includegraphics[height=.6in]{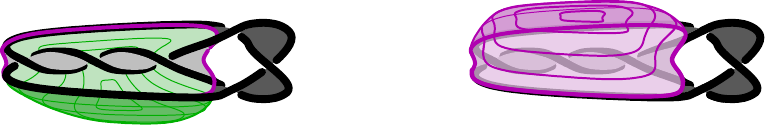}
\caption{Replumbing replaces a plumbing disk with an associated plumbing cap}
\label{Fi:Replumb}
\end{center}
\end{figure}

\begin{rem}\label{R:Boundary_Sum}
The simplest types of Murasugi sums are {\bf boundary sums} $F=F_1\natural F_2$.  Although one typically thinks of this operation as gluing $F_1$ and $F_2$ along a pair of arcs in their boundaries, one can think of it instead as gluing $F_1$ and $F_2$ along a pair of disks as suggested in Figure \ref{Fi:Boundary_Sum}.
\end{rem}

\begin{figure}[h]
\begin{center}
\labellist
\pinlabel{$\natural$} at 330 110
\pinlabel{$=$} at 660 110
\endlabellist
\includegraphics[width={4.5in}
]{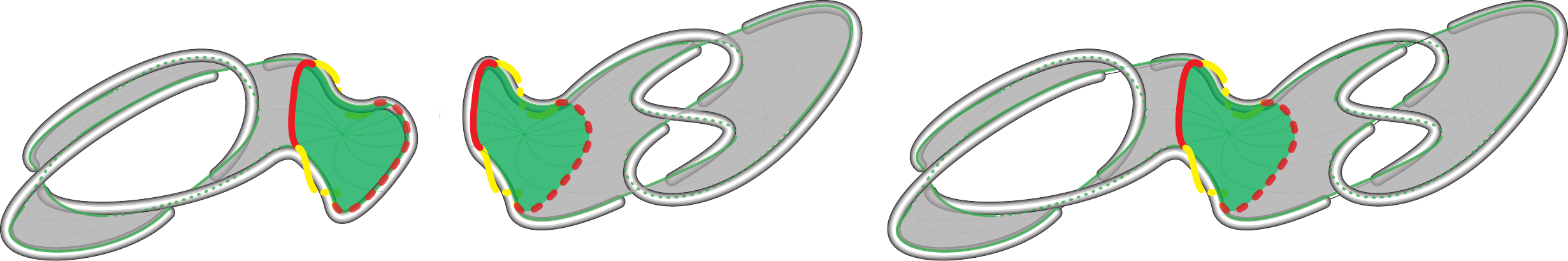}
\caption{Viewing boundary sum as Murasugi sum along a (green) bigon}
\label{Fi:Boundary_Sum}
\end{center}
\end{figure}

\begin{rem}\label{R:Plumbing}
Other than boundary sums, the simplest Murasugi sums are gluings along (topological) squares. In the context of spanning surfaces, the term ``plumbing'' (as opposed to generalized plumbing) is sometimes reserved for this specific operation.  One can always specify the surface that results from such a plumbing (up to free isotopy) by identifying the two factor surfaces $F_1$ and $F_2$, a properly embedded arc $\alpha_i$ in each surface $F_i$, a choice of local orientation on $F_i$ near $\alpha_i$, and an additional binary choice.  One can then canonically take a regular neighborhood $U_i$ of $\alpha_i$ in $F_i$; perturb $\mathring{U}_i$ in the direction specified by the orientations of $S^3$ and $F_i$ near $\alpha_i$, while fixing $\partial U_i$ and the rest of $F_i$; cut out the interior of the 3-ball swept out by the isotopy, leaving $F_i$ in a 3-ball $B_i$ with $F_i\cap\partial B_i=U_i$; and glue $B_1$ to $B_2$ by an orientation-reversing homeomorphism of their boundaries that identifies $U_1$ with $U_2$.  Indeed, there are generally two possible choices for this final gluing, which is why the specifying data included an ``additional binary choice.'' Depending on the symmetry of the factors $F_i$, however, some of this information might be unnecessary.  

In particular, if $F_2$ is an unknotted annulus or M\"obius band (and one wishes to avoid boundary sums), then one can describe a plumbing $F=F_1*F_2$ by specifying just $F_1$, $F_2$, and a properly embedded arc $\alpha\subset F_1$ and a local orientation of $F_1$ near $\alpha$.  The sufficiency of this data underpins the traditional means of describing arborescent surfaces \cite{bs}.  We will return to this topic in \textsection\ref{S:Final}.\end{rem}

A great deal is known about {\it oriented} plumbings $F=F_0*F_1$, which were first described and utilized by Murasugi in the context of knot groups \cite{mur63}. For example,  Harer showed that every fiber surface in $S^3$ can be constructed by plumbing Hopf bands and performing twisting operations introduced by Stallings \cite{harer,stallings}. Harer conjectured further that plumbing {\it and deplumbing} Hopf bands suffices, and Giroux-Goodman later proved this fact using contact topology \cite{girgoo}.  It remains an open problem to give an a more elementary proof of Harer's conjecture.  

Gabai proved that there are several geometric properties which $F$ possesses if (and usually only if) $F_0$ and $F_1$ do: 
\begin{theorem}\label{T:gabai}[\cite{gab1,gab2}]
If $F_0*F_1=F$ is a Murasugi sum of Seifert surfaces with each $\partial F_i=L_i$ and $\partial F=L$, then:
\begin{enumerate}[label=(\arabic*)]
\item\label{part:1} $F$ is {essential} if $F_0$ and $F_1$ are essential.
\item $F$ has {minimal} genus if and only if $F_0$ and $F_1$ both have minimal genus.
\item $L$ is a {fibered link with fiber} $F$ {if and only if} each $L_i$ is fibered with fiber $F_i$.
\item\label{part:4} $S^3\setminus \overset{_\circ}{\nu}L$ has a {nice codimension 1 foliation}  {if and only if} both $S^3\setminus \overset{_\circ}{\nu}L_i$ do.
\end{enumerate}
\end{theorem}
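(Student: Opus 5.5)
Since this is Gabai's theorem, the plan is to follow his sutured-manifold approach rather than argue directly in $S^3$. Let $M=S^3\cut F$, $M_0=S^3\cut F_0$, $M_1=S^3\cut F_1$, each a sutured manifold with suture the appropriate copy of the link. Set $S=U\cup V$, a 2-sphere splitting $S^3$ into $B_0\sqcup B_1$. The first structural step is to describe $M$ in terms of $M_0$ and $M_1$.
\begin{itemize}
\item The disk $V$ lies in $M$ and is properly embedded there. Cutting $M$ along $V$ gives $M\cap B_0$ and $M\cap B_1$, which I expect to agree with $M_0$ and $M_1$ after cutting each along a disk parallel to $U$.
\item Conversely, $M_0$ contains a copy of $V$ (the 3-ball $B_1$ with $F_1\setminus U$ removed becomes a product piece).
\item Using these, I would show that $M$ is obtained from $M_0\sqcup M_1$ by a gluing along disks that respects the $R_\pm$ structure.
\end{itemize}
Getting this bookkeeping exactly right, with $R_+(M)$ and $R_-(M)$ split correctly across $\partial U$, is the foundation for all four parts.

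For part \ref{part:1} and part (2), the key claim is that $M$ is taut if and only if $M_0$ and $M_1$ are taut.
\begin{itemize}
\item The Euler characteristic is additive: $\chi(F)=\chi(F_0)+\chi(F_1)-1$. So genus minimality of $F$ is equivalent to Thurston-norm minimality of $R_\pm(M)$.
\item For the direction "factors essential $\Rightarrow$ $F$ essential", take a compressing disk $D$ for $F$ and put it transverse to $V$ with $|D\cap V|$ minimal.
\item Closed curves of $D\cap V$ are removed by an innermost-disk argument, since $V$ is a disk.
\item For arcs, take an outermost arc in $D$. It cuts off a subdisk $D'$ lying in $B_0$ or $B_1$ with $\partial D'$ on $F_i\cup V$. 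Replumbing, i.e.\ swapping $U$ and $V$ as in Definition \ref{D:Plumb}, turns $D'$ into a disk in $S^3\setminus F_i$ bounded by a curve in $F_i$.
\item Incompressibility of $F_i$ then forces $\partial D'$ to bound a disk in $F_i$. Isotoping across that disk lowers $|D\cap V|$, a contradiction.
\end{itemize}
The main obstacle is that the outermost arc may have endpoints on $\partial U$, where $F_i$ meets $F_{1-i}$. Then the disk in $F_i$ bounded by $\partial D'$ might contain parts of $U$ in a way that obstructs the isotopy.

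To handle this, I would adopt Gabai's actual strategy: rather than a naive outermost-arc argument, use a sutured manifold hierarchy. Decompose $M$ along the product disks coming from $V$ to reduce to $M_0$ and $M_1$, and invoke the theorem that a sutured manifold decomposition preserves tautness in both directions when the decomposing surface is a product disk.

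For part (3), $L$ is fibered with fiber $F$ exactly when $M$ is a product sutured manifold. The plan is:
\begin{itemize}
\item Show that decomposition along product disks preserves and reflects being a product.
\item The backward direction ($M_0,M_1$ products $\Rightarrow$ $M$ a product) then follows by gluing the product structures along $V$.
\item The forward direction is harder, since $V$ need not be vertical in an arbitrary product structure on $M$. For this I would use the criterion that a taut $M$ is a product iff it admits a complete product-disk decomposition, or else Stallings' criterion that the commutator subgroup is finitely generated together with a Mayer--Vietoris/van Kampen decomposition of $\pi_1(M)$.
\end{itemize}

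For part \ref{part:4}, the plan is:
\begin{itemize}
\item Use Gabai's theorem that a taut sutured manifold hierarchy produces a taut (nice) foliation. The taut-iff statement above then gives foliations on $S^3\setminus\mathring\nu L$ from those of the pieces.
\item For the converse, restrict a taut foliation to a neighborhood of each $L_i$'s complement.
\item Alternatively, reduce both directions to the existence of a taut Seifert surface, via Gabai's equivalence between tautness of the complementary sutured manifold and existence of a nice foliation.
\end{itemize}
The hardest step overall is establishing that tautness passes both up and down through the disk decomposition along $V$.
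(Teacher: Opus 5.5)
The paper does not prove this theorem (it only cites Gabai), and your plan has a genuine gap at its central step: $\widetilde{V}$ is not a product disk.

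Suppose $U$ is a $2n$-gon, and label so that $B_1$ lies on the $R_+$ side of $U$. Then $\partial\widetilde{V}$ crosses the suture $\widetilde{L}$ in $2n$ points. It lies on $R_-$ along the edges of $\partial U$ contained in $\partial F_0$, and on $R_+$ along the edges contained in $\partial F_1$. So it is a product disk only for boundary sums. No other single product disk can substitute. A product-disk decomposition cuts $R_+(M)\cong F$ along one arc, and a plumbing of two Hopf bands, for example, is a once-punctured torus, which no arc cuts into two annuli.

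What is true is that decomposing $M$ along $\widetilde{V}$, suitably oriented, gives exactly $M_0\sqcup M_1$, with no further cutting. The $B_0$-copy of $\widetilde V$ plays the role of the $R_+$ side of $U$ in $M_0$, and the $B_1$-copy plays the $R_-$ side of $U$ in $M_1$. But for a disk meeting the sutures $2n\ge 4$ times there is no two-way principle:
\begin{itemize}
\item ``$M_0\sqcup M_1$ taut $\Rightarrow$ $M$ taut'' is a deep theorem of Gabai on decompositions with taut result, not the elementary product-disk lemma.
\item Neither tautness nor product structure passes downward in general. So ``$F$ minimal genus $\Rightarrow$ $F_i$ minimal genus'' and ``$F$ a fiber $\Rightarrow$ $F_i$ fibers'' need arguments specific to Murasugi sums. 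For genus, glue a better Seifert surface for $L_0$ lying in $S^3\cut F_0$ to $F_1\setminus\mathring{U}$ along the edges of $U$ in $\partial F_0$.
\end{itemize}

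Part (1) cannot be routed through tautness at all, because an incompressible Seifert surface need not have minimal genus. Your outermost-arc argument also fails, and for a different reason than the one you name, since every arc of $D\cap V$ ends on $\partial U$. Suppose $D$ compresses $F$ from the $R_+$ side. Then the arcs of $D\cap V$ end on the edges of $\partial U$ in $\partial F_1$. After replumbing, an outermost subdisk in $B_1$ has its $V$-arc on the $R_-$ side of $U$ but its other arc on the $R_+$ side of $F_1$. It is therefore a product disk in $S^3\cut F_1$ crossing $\partial F_1$ twice, not a compressing disk, and all outermost subdisks may lie in $B_1$.

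The missing idea is to use every component $P$ of $D\cap B_0$, not just outermost ones:
\begin{itemize}
\item Since the $B_0$-copy of $V$ lies in $R_+(M_0)$, each $\partial P$ lies in $R_+(M_0)$.
\item So either $P$ compresses $F_0$, or $\partial P$ bounds a disk $\Delta_P\subset R_+(M_0)$.
\item Swap $P$ for an innermost such $\Delta_P$ and push its $V$-part across $V$. This gives disks with boundary on the $R_+$ side of $F$, at least one of them essential, each meeting $V$ in fewer arcs.
\item Iterating, $D\cap V=\varnothing$, and $D$ compresses $F_0$ or $F_1$. For the $R_-$ side, exchange the roles of $B_0$ and $B_1$.
\end{itemize}

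Finally, your converse for (4), restricting a foliation of $S^3\setminus\mathring{\nu}L$, does not make sense: $S^3\setminus\mathring{\nu}L_i$ is not a subspace of $S^3\setminus\mathring{\nu}L$.
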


The converse of \ref{part:1} is false; more on this shortly (
see Figure \ref{Fi:GabaiConverse}).
See \cite{gab2} for details on \ref{part:4}. 

The author proved that $F$ has invertible Seifert matrix if and only if both $F_0$ and $F_1$ do and used this fact to give a simple proof of the theorem, first proven independently by Crowell and Murasugi, that the genus of an oriented alternating link equals half the breadth of its Alexander polynomial and is realized by the algorithmic Seifert surface from any alternating diagram \cite{mur58,crowell,cromur}.

Baader-Graf described a simple geometric method of fiber-detection, leading to a new proof of part (iii) of Theorem \ref{T:gabai} \cite{baadergraf}. Torisu extended (iii) to a statement about tight contact structures \cite{tor}.  Saito--Yamamoto proved that for any oriented plumbing $F=F_0*F_1$ of fiber surfaces, the arc complex for the open book decomposition of $S^3$ with page $F$ has translation distance at most two \cite{saiyam}. 
Extending (ii), Kobayashi proved that a minimal genus Seifert surface $F=F_0*F_1$ is isotopically unique if and only if $F_0$ is also unique and $F_1$ is fibered, or vice-versa \cite{kob}. Hirasawa-Sakuma used Kobayashi's result (with Menasco--Thistlethwaite's flyping theorem \cite{menthis91,menthis93,tait}) to show that certain minimal genus Seifert surfaces for alternating links cannot be constructed by applying Seifert's algorithm to an alternating diagram \cite{hs97}. Kim--Miller--Yoo showed, however, that these surfaces are all isotopic through the 4-ball \cite{kmy24}.

Oriented plumbing has also proven to be a valuable tool for studying polynomial and homological knot invariants. For example, 
Hongler--Weber \cite{hongweb04,hongweb05} used the flyping theorem to show that every oriented alternating link decomposes in a unique way under diagrammatic Murasugi sum of algorithmic Seifert surfaces from alternating diagrams, and they used this decomposition to extend results of Kobayashi--Kodama \cite{kobkod} and Murasugi--Przytycki \cite{murprz}, which also used oriented plumbing, regarding the term of the HOMFLY-PT polynomial of maximum $z$-degree. Costa--Hongler used similar techniques to study Conway polynomials of {\it periodic} alternating links \cite{coshon}. 

Perhaps the most remarkable application of oriented plumbing is Ni's plumbing-to-product formula for knot Floer homology,
\begin{equation}\label{E:HFK}
\wh{HFK}(K,g;\F)\cong\wh{HFK}(K_1,g_1;\F)\otimes\wh{HFK}(K_2,g_2;\F),
\end{equation}
where $\F$ is any field and $g,g_1,g_2$ denote 3-genus \cite{ni}.  Juh\'asz obtained a new proof of (\ref{E:HFK}) which led to a simplified proof of the fact that knot Floer homology detects fibered knots \cite{juh}. 

Rudolph constructed interesting oriented plumbings in the contexts of quasipositivity \cite{rud89} and the slice-ribbon conjecture \cite{rud02}.    

If $F_0*F_1=F$ is a plumbing of Seifert surfaces with plumbing cap $D$, then $|\partial D\cap L|=2n$ for some $n$;\footnote{Here and throughout, bars count connected components.} %
Goda established the following inequality among the handle numbers of the sutured manifolds $S^3\cut {F_i}$ and $S^3\cut F$ \cite{goda}:\footnote{The handle number $h(Y)$ of a compression body $Y$ is the minimal number of 2-handles needed to construct $Y$. The handle number of a sutured manifold $(M,\gamma)$ is $\min\{h(Y):~(Y,Y')\text{ is a Heegaard splitting of }(M,\gamma)\}$.}
\[h(S^3\cut {F_0})+h(S^3\cut{F_1})-n+1\leq h(S^3\cut {F})\leq h(S^3\cut {F_0})+h(S^3\cut {F_1}).\]
Thus, handle number is additive under boundary connect sum and is subadditive under plumbing, with defect bounded by the complexity of the plumbing.  

For any knot $K\subset S^3$ and any $s\in\Q$, let $M(K,s)$ denote the 3-manifold obtained from $S^3$ by performing Dehn surgery along $K$ with surgery slope $s$.  With $n$ as above, Li showed that $M(K,s)$ has a taut foliation for all slopes $1-n< s< n-1$ \cite{li}.

Ozbagci--Popescu-Pampu generalized the notion of Murasugi sum to {\it smooth oriented manifolds of arbitrary dimension} in such a way that part (iii) of Theorem \ref{T:gabai} extends appropriately \cite{ozbpop}. Their paper is also an excellent survey of prior literature.  A recent preprint by Karimi, Kim, Miller, Naylor, and the author considers Murasugi sums of spanning solids for knotted surfaces in $S^4$ \cite{murasugi4D}.

Perhaps the best-studied class of plumbings are the {\it arborescent surfaces}, obtained by plumbing together essential unknotted annuli and M\"obius bands according to the pattern of a tree, not just in the oriented case \cite{sak,gab86arb,kobkod} but also in the unoriented case. See the magnificent treatise by Bonahon--Siebenmann \cite{bs}.  

Unoriented plumbings appear less often in the literature than oriented ones. Recently, the author used replumbings of definite surfaces to give the first purely geometric proof of Menasco--Thistlethwaite's flyping theorem \cite{flyping,menthis91,menthis93,tait}, and to extend that result to virtual links and links in thickened surfaces \cite{virtual}. In a different paper, the author considered replumbing moves in the context of Khovanov homology \cite{khovplumb}.  

The following theorems of Ozawa conclude this survey. The first extends part (1) of Gabai's theorem to the unoriented case, and the second is a direct corollary, since checkerboard surfaces from connected reduced alternating link diagrams are $\pi_1$-essential:

\begin{theorem}
\cite[Lemma 3.4]{ozawa11}\label{T:Ozawa}
If $F=F_0*F_1$ is a Murasugi sum of $\pi_1$-essential spanning surfaces $F_i$, then $F$ is $\pi_1$-essential.
\end{theorem}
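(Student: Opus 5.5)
The plan is to argue by contradiction with an innermost-curve and outermost-arc analysis of an algebraic compressing disk against the plumbing disk and cap. By Definition \ref{D:AlgEss} there are two ways $F$ can fail to be $\pi_1$-essential, and I would treat them separately.

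\emph{The M\"obius band case.} Suppose $F$ is a M\"obius band spanning the unknot. Then $\chi(F)=0$. Since $\chi(F)=\chi(F_0)+\chi(F_1)-1$ and neither $F_i$ is a disk, each $F_i$ is a connected surface with $\chi(F_i)\le 0$. This forces one factor, say $F_1$, to have $\chi=1$, which is impossible, or else both $F_i$ are annuli or M\"obius bands with a total of $0$, which does not add up. So this case is excluded by an Euler characteristic count; components can be handled one at a time.

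\emph{The non-injective case.} Suppose $\operatorname{int}(F)\hookrightarrow S^3\setminus L$ is not $\pi_1$-injective. By the loop theorem (see the remark following Example \ref{Ex:Geometrically_Incompressible}), there is an embedded disk $\wt D\subset S^3\cut F$ whose boundary is an essential curve on $\wt F$. Let $\Sigma=h_F^{-1}(U)\cup V$, the sphere $U\cup V$ lifted; it separates $S^3\cut F$ into pieces corresponding to $B_0$ and $B_1$. I would isotope $\wt D$ to be transverse to $V$ and choose it to minimize $|\wt D\cap V|$.

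\begin{enumerate}
\item \emph{Remove circles.} Closed curves of $\wt D\cap V$ are removed by innermost-disk swaps in $V$. This uses that $S^3\setminus(L\cup F)$ is irreducible away from $L$, or, more simply, cut-and-paste, since $V$ is a disk.
\item \emph{No intersection.} If $\wt D\cap V=\emptyset$, then $\wt D$ lies in one ball $B_i$. Its boundary lies in $F_i$ away from $U$ (or in $U$ doubled). It is then an algebraic compressing disk for $F_i$, because $S^3\setminus L_i$ deformation retracts appropriately. By hypothesis it is therefore inessential in $F_i$, and hence also in $F$, since $F_i\setminus U$ is $\pi_1$-injective into $F$. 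This contradicts the choice of $\wt D$.
\item \emph{Outermost arc.} Otherwise, take an arc of $\wt D\cap V$ that is outermost in $\wt D$. It cuts off a subdisk $E$ lying in some $B_i$, with $\partial E=a\cup b$, where $a\subset V$ and $b\subset\wt F_i$ has endpoints on $\partial V=\partial U$. Replacing $V$ by $U$, the disk $E$ together with a subdisk of $U$ gives a map from a disk into $B_i\setminus L_i$ with boundary on $F_i$. Since $F_i$ is $\pi_1$-injective, the loop formed by $b$ and an arc of $U$ is nullhomotopic in $F_i$. Because $F_i$ is not a disk and $\partial U$ alternates between $L$ and arcs interior to $F$, one can conclude that $b$ is homotopic rel endpoints in $F_i$ into $U$.
\item \emph{Reduce intersections.} Using this homotopy, replace $E$ and push $\wt D$ across $V$. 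This reduces $|\wt D\cap V|$, contradicting minimality. If the loop obtained is only immersed, the step is done at the level of the map $\partial\wt D\to F$, i.e.\ inducting on a complexity of homotopies rather than embedded isotopies.
\end{enumerate}

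\emph{Main obstacle.} The hard part is step 3. The $\pi_1$-injectivity of $F_i$ only gives a homotopy, not an isotopy, and $\partial\wt D$ is only immersed in $F$. So the right framework is to work with maps: represent a nontrivial kernel element by a map of a disk transverse to $V$, and show that its intersection pattern with $V$ can be simplified homotopically. I would first try to formulate everything via van Kampen. $\pi_1(S^3\setminus L)$ is an amalgam over the free group $\pi_1(V\setminus L)$, and $\pi_1(F)$ is an amalgam of the $\pi_1(F_i)$ over $\pi_1(U)=1$. The goal is then to check that the induced map of graphs of groups is injective on vertex groups and respects edge groups, using the non-disk condition and the M\"obius-band exclusion in each factor. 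Once that is in place, a standard normal-form argument gives injectivity.
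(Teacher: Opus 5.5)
The paper gives no proof of this statement; it quotes it from Ozawa. So I am judging your argument on its own terms.

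Your Euler-characteristic treatment of the case where $F$ is a M\"obius band is fine: if $F$ is connected then so are $F_0$ and $F_1$, whence $\chi(F)=\chi(F_0)+\chi(F_1)-1\le -1$. Step 2 can also be made rigorous.

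\textbf{The gap is step 3.} The circle $\partial U=\partial V$ consists of $2n$ arcs lying alternately in $\partial F_0$ and in $\partial F_1$. Each arc is interior to $F$, but an arc of $\partial U\cap\partial F_i$ lies on $L_i$. If an endpoint of your outermost arc $a$ lies on such an arc, the disk you get by ``replacing $V$ by $U$'' touches $L_i$ there, and its boundary passes from one side of $F_i$ to the other. It is not a disk in $S^3\setminus L_i$ with boundary in $F_i$, so $\pi_1$-injectivity of $F_i$ says nothing about it.

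This is not a technicality. Steps 1--4 only ever use $\pi_1$-injectivity of the $F_i$, never the clause that $F_i$ is not a trivial M\"obius band, and the theorem is false without that clause. The boundary sum of two unknotted one-half-twist M\"obius bands is a Murasugi sum of two $\pi_1$-injective surfaces. Yet it spans the unknot, and both cores $c_1,c_2$ map to $\pm$ a generator of $\pi_1(S^3\setminus L)\cong\mathbb{Z}$. Hence some $c_1^{\pm1}c_2^{\pm1}\neq 1$ lies in the kernel (compare Figure~\ref{Fi:Ex1}, right). Your argument must break somewhere in that example, and it breaks exactly at outermost disks of this kind.

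Separately, the conclusion ``$b$ is homotopic rel endpoints into $U$'' is just a restatement of the nullhomotopy. To push $E$ across $\wt{V}$ you need an embedded bigon between $b$ and an arc of $\partial\wt{V}$ in $\wt{F}$ on the $B_i$ side. A homotopy in $F_i$, which forgets the sides of $F$, does not provide one.

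\textbf{What the step actually requires.} Identify the side $M_i$ of $\wt{V}$ in $S^3\cut F$ with $S^3\cut F_i$, so that $\wt{V}$ corresponds to the copy $U'$ of $U$ facing $B_{1-i}$. There are three cases.
\begin{enumerate}
\item Both endpoints of $a$ lie on arcs of $\partial U$ interior to $F_i$. Then $b\cup a'$, with $a'\subset U'$, is an embedded curve in $\wt{F_i}$ bounding a disk in $S^3\cut F_i$, hence a disk in $\wt{F_i}$. That disk cannot contain a piece of $U'$ abutting $\wt{L_i}$. This forces both endpoints onto a single arc of $\partial\wt{V}$ and yields the bigon, contradicting minimality.
\item Exactly one endpoint lies on an arc of $\partial U\cap L_i$. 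The closed-up disk crosses $\wt{L_i}$ once. Banding two parallel copies along that component of $\wt{L_i}$ gives a compressing disk for $\wt{F_i}$ unless $F_i$ is a trivial M\"obius band. This is where the second clause of Definition~\ref{D:AlgEss} enters (equivalently, $\partial$-incompressibility of $\partial\nu F_i\cut\nu L_i$).
\item The endpoints lie on two different arcs of $\partial U\cap L_i$. The closed-up disk crosses $\wt{L_i}$ twice. Essential surfaces do admit such product-type disks, for example the cap of a boundary-sum decomposition of $F_i$. So the outermost disk yields no contradiction, and a further, more global argument is needed.
\end{enumerate}

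\textbf{Your ``main obstacle'' is misdiagnosed.} After the loop theorem, $\wt{D}$ and $\partial\wt{D}\subset\wt{F}$ are embedded, and $\pi_1$-injectivity of $F_i$ is equivalent to incompressibility of $\wt{F_i}$ in $S^3\cut F_i$. Nothing needs to be done at the level of homotopies.

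The van Kampen fallback does not work as sketched either.
\begin{enumerate}
\item $V\setminus L$ is contractible; the relevant surface is the punctured sphere $(U\cup V)\setminus L$.
\item That punctured sphere is often compressible in $B_i\setminus L$, for instance when $L\cap B_i$ is a rational tangle, as in plumbing an unknotted band. So there are no injective edge maps for a normal-form argument.
\item The condition you would need is that the image of $\pi_1(F_i)$ meets the image of the edge group trivially. That condition fails in the M\"obius example above; it is the content of the theorem, not a routine check.
\end{enumerate}
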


\begin{theorem}\cite[Theorem 2.8]{ozawa11}\label{T:ozawafkp}
If $x$ is a homogeneously adequate state, then the state surface $F_x$ 
is $\pi_1$-essential.
\end{theorem}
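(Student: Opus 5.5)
The plan is to reduce to the two ingredients the paper has already pointed to: Theorem \ref{T:Ozawa}, and the fact that checkerboard surfaces from connected, reduced alternating diagrams are $\pi_1$-essential. The bridge between them is a decomposition of $F_x$ as an iterated Murasugi sum along state disks.

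First I would decompose along state circles. Consider the state graph $G_x$: its vertices are the state circles of $x$, and its edges are the $A$- and $B$-labeled crossing edges. Any state circle $c$ that has state circles lying on both sides of it in $S^2$ bounds a state disk $U$. Its two sides give a decomposition $F_x=F_0*F_1$ along $U$. The plumbing cap $V$ is obtained by pushing a parallel copy of $U$ to the other side of $S^2$; the surfaces are layered on one side of $S^2$, so $V$ meets $F_x$ only in $\partial U$. Condition (2) of Definition \ref{D:Plumb} holds whenever each side contains at least one crossing edge. Iterating, I would write
\[
F_x=F^{(1)}*\cdots*F^{(m)},
\]
where each $F^{(j)}$ is a state surface whose state circles are all innermost in $S^2$ relative to that factor. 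By the argument of Proposition \ref{P:StateToCB}, each such $F^{(j)}$ is a checkerboard surface of a diagram $P_j$.

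Next I would identify the factors. Homogeneity means that the edges in each such piece (a block of the decomposition of $G_x$ at these cut circles) all carry the same label. So each $P_j$ has all of its crossings of a single type relative to the checkerboard coloring, and is therefore alternating. Connectedness of each piece of $G_x$ makes $P_j$ connected. Adequacy means no edge of $G_x$ joins a state circle to itself. This translates into the statement that $P_j$ has no nugatory crossing, i.e.\ $P_j$ is reduced.

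Some care is needed with pieces consisting of a single edge, or of parallel edges between just two circles. A single edge gives a disk factor. These I would absorb by not cutting there, which amounts to a boundary-sum-by-a-disk that does not change the isotopy class. Parallel edges between two circles give twisted bands, i.e.\ reduced alternating $(2,n)$ diagrams. A single crossing with $n=1$ would give a M\"obius band, but it cannot occur as a reduced piece: it is a disk when the band joins two distinct circles, which is again absorbed.

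Then I would invoke the known result that a checkerboard surface of a connected, reduced alternating diagram with at least two crossings is $\pi_1$-essential. Its $\pi_1$-injectivity follows from Aumann's theorem, or from a Menasco-style crossing-ball argument. It also avoids the M\"obius-band exception in Definition \ref{D:AlgEss}, since a reduced diagram with at least two crossings does not represent the unknot with a one-crossing surface. Finally I would apply Theorem \ref{T:Ozawa} inductively along the sequence of Murasugi sums to conclude that $F_x$ is $\pi_1$-essential.

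I expect the main obstacle to be the bookkeeping in the decomposition step. One must verify that every cut is a genuine Murasugi sum, with neither side a disk, and that the combinatorial hypotheses on $x$ (homogeneous and adequate) transfer exactly to the hypotheses (alternating, connected, reduced, non-M\"obius) on every factor. The degenerate pieces, namely single-edge blocks and two-circle twist regions, are where this could go wrong, so I would handle them first and separately.
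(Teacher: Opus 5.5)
Your strategy is the one the paper indicates. The paper calls this theorem a direct corollary of Theorem~\ref{T:Ozawa}, since checkerboard surfaces of connected reduced alternating diagrams are $\pi_1$-essential. However, one step fails as written: \emph{adequacy does not make each $P_j$ reduced}.

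For the factor coming from a region $R$ of $S^2\setminus x$, consider the graph whose vertices are the state circles bounding $R$ and whose edges are the crossings in $R$. This is exactly the checkerboard graph of $P_j$ for the surface $F^{(j)}$. A crossing of $P_j$ is nugatory precisely when its edge is a loop \emph{or an isthmus} (cut edge) of that graph, and adequacy rules out only loops. For example, add a Reidemeister~I kink to a reduced alternating diagram so that it stays alternating. Take the checkerboard state whose surface contains the kink's monogon as a state disk. This state is homogeneous and adequate, and all of its circles are innermost, so your decomposition has the single factor $P_1=P$. But the kink crossing is a pendant edge, so $P_1$ is not reduced. The same phenomenon refutes your claim that condition~(2) of Definition~\ref{D:Plumb} holds as soon as each side contains a crossing: a side whose circles and edges form a tree is a disk. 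Your absorption of single-edge pieces does not reach pendant edges or isthmuses inside larger blocks.

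The repair fits your framework. An isthmus crossing is nugatory, with its two sides lying in complementary balls. So cutting its half-twisted band exhibits $F^{(j)}$ as a boundary sum of the checkerboard surfaces of the two sub-diagrams, which is a Murasugi sum by Remark~\ref{R:Boundary_Sum}. Equivalently, flipping one side deletes the crossing by an isotopy of the surface, keeping the diagram alternating. Both pieces are again connected, alternating, and loopless. After splitting at every isthmus, each piece is either a disk, which you absorb, or the checkerboard surface of a connected alternating diagram whose graph has no loops and no isthmuses, i.e.\ a reduced one with at least two crossings. If you want to use Proposition~\ref{P:CBEss} exactly as proved in the paper, which also assumes primeness, split non-prime pieces along diagrammatic connected sums; these are again boundary sums. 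With this, your inductive application of Theorem~\ref{T:Ozawa} goes through.
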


\section{Caps and height}\label{S:Caps}

Next, we introduce the technical machinery that we will need for our proofs.  The underlying ideas here are traditional.  The formalism is adapted from a more extensive treatment in the author's doctoral thesis \cite{TkThesis}, but streamlined to the specific purposes of this paper, chiefly the proofs of Theorem \ref{T:BadPlumb} and Proposition \ref{P:GeomEss}, although we will also make ample use of it in several examples.  We begin with the basic definitions, followed by an example in which the proof is just an outermost disk argument.  Then we introduce the notion of ``height,'' which increases the robustness of outermost-disk-type arguments, and we demonstrate this improved robustness with further examples.  The proof of Proposition \ref{P:GeomEss} in \textsection\ref{S:BadPlumb} will be a much more involved version of this type of argument; the examples in this section are intended largely to function as useful warm-ups for the reader, although some aspects may be of independent interest, as we discuss further in \textsection\ref{S:Final}.

\subsection{Caps and cap systems}

\begin{definition}\label{D:cap}
A {\bf cap} for $F$ is the image $V= h_F(\wt{V})$ of a compressing disk for $\partial (S^3\cut F)$. See Figure \ref{Fi:AlgCaps}. We always assume that $\partial\wt{V}\pitchfork\wt{L}$ in $\partial (S^3\cut F)$.
\end{definition}

Note that if $V$ is a cap for $F$, then we allow $\partial V$ to intersect itself or $L$. Note also that if $\partial V\cap L=\varnothing$, then $\partial V$ cannot be contractible in $F$, or else $\partial\wt{V}$ would be contractible in $\partial (S^3\cut F)$.  If $\partial V$ intersects $L$, however, then $\partial V$ may well be contractible in $F$.  For example, this is the case if $V$ is a plumbing cap.

\begin{definition}\label{D:cap_system}
A {\bf cap system} for $F$ is a union ${W}=\bigcup_i{V}_i$ of caps $V_i= h_F(\wt{V}_i)$ for $F$ with disjoint interiors, such that $\wt{W}=\bigcup_i\wt{V}_i$ cuts $S^3\cut F$ into balls, while $\partial\wt{W}$ contains $\wt{L}$ and cuts $\partial (S^3\cut F)$ into disks.  

If $W$ is a cap system for $F$ and $D$ is a cap for $F$ with $D\pitchfork W$, then the components of $D\cut W$ are called {\bf subdisks}, and for each arc $\alpha$ of $D\cap W$, each component of $D\cut \alpha$ is called a {\bf half-disk}. The same terminology applies to any disk containing a set of mutually disjoint, properly embedded arcs.
\end{definition}

\begin{notation}\label{N:CapLift}
For a cap system ${W}$, $\wt{W}$ denotes the (unique) lift which is comprised of properly embedded disks.  
\end{notation}

\begin{figure}[h]
\begin{center}
\includegraphics[width={4.5in}]{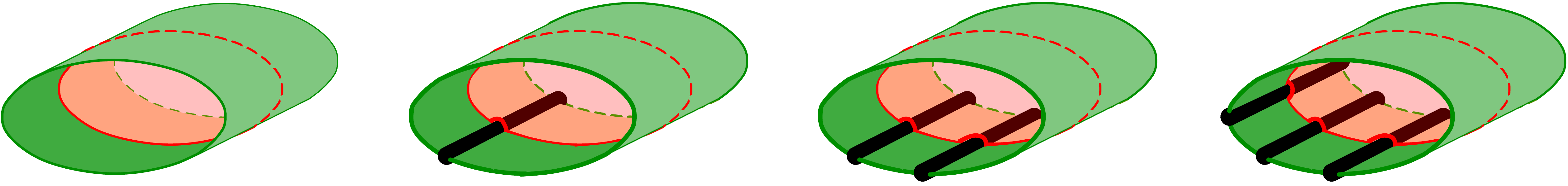}
\caption{Compressing disks for $\partial (S^3\cut F)$. Their images under $h_F$ are caps for $F$.}
\label{Fi:AlgCaps}
\end{center}
\end{figure}

\begin{example}\label{Ex:ChessboardCaps}
If $B$ and $W$ are the checkerboard surfaces from a connected link diagram, then $B$ is a cap system for $W$, and $W$ is a cap system for $B$.
\end{example}

\begin{rem}\label{R:fkp}
If ${W}$ is a cap system for $F$, then $F\cup{W}$ cuts $S^3$ into polyhedra and cuts $S^3\setminus L$ into ideal polyhedra. Futer-Kalfagianni-Purcell used such polyhedral decompositions to establish deep relationships between essential surfaces, hyperbolic geometry,  and colored Jones polynomials \cite{fkpguts,fkpquasi}.  In particular, they obtained an independent proof of Theorem \ref{T:ozawafkp} in the case that $x$ is all-$A$ or all-$B$.
\end{rem}

To extend Example \ref{Ex:ChessboardCaps} to a more general class of examples, it will sometimes be helpful to use the crossing bubbles introduced by Menasco in \cite{men84}. Given a diagram $P$ of a link $L$, insert a tiny ball $C_i$ at each crossing and perturb $P$ to get an embedding of $L$ in $(S^2\setminus C)\cup\partial C$, where $C=\bigsqcup_iC_i$. Do this so that each crossing ball $C_i$ contains precisely one straight line segment orthogonal to $S^2$ whose endpoints lie on $L$---call this a ``vertical arc'' and denote it by $c_i$; also denote $c=\bigsqcup_ic_i$.  The states of $P$ correspond to the submanifolds $x\subset (L\cup\partial C)\cap S^2$ that contain $L\cap S^2$. See Figure \ref{Fi:BubbleSmooth}. 

In this setting, $\partial C_i\cap S^2\cut L$ consists of four arcs on the equator of $\partial C_i$ for each $i$.  The union of $L\cap\partial C_i$ (the overpass and underpass at $C_i$) with either opposite pair of arcs forms a circle on $\partial C_i$, which bounds a disk in $C_i$ called a {\bf crossing band}---it is convenient to require that a crossing band in $C_i$ must contain the vertical arc $c_i$. 

\begin{figure}[h]
\begin{center}
\includegraphics[height=.75in]{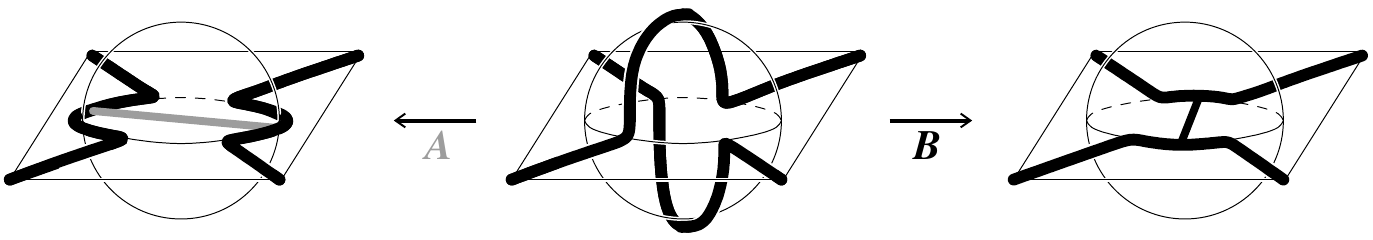}
\caption{Smoothings in the crossing ball setting}
\label{Fi:BubbleSmooth}
\end{center}
\end{figure}

\begin{definition}\label{D:FlatCapSystem}
Let $P\subset S^2$ be a diagram of a link $L$ with the type of crossing ball structure $C=\bigsqcup_iC_i$ described above, and let $F$ be a spanning surface for $L$, such that (1) $F$ intersects each $C_i$ in a crossing band, (2) $S^2\cup C$ cuts $F$ into disks, and (3) $\mathring{F}\cap(S^2\setminus C)$ is simply connected. Let ${W}_C$ be the union of the crossing bands opposite to those in $F$, isotoped so that $W_C\cap F=c$---the vertical arc $c_i$ inside each crossing bubble should cut both of the crossing bands there in half. Then $W=(S^2\cut (C\cup F))\cup{W}_y$ is a cap system for $F$, in which each cap is a disk of $S^2\cut(L\cup C\cup F)$ together with half of each abutting crossing band.  The same setup works more generally for any spanning surface $F$ for which each $F\cap C_i$ is a crossing band containing $v_i$, while $F\cut (S^2\cup C)$ and $(S^2\cut C)\cut F$ are both comprised entirely of disks.  We call $W$ the {\bf flat cap system} for (this positioning of) $F$.
\end{definition}

For example, if $F$ is a checkerboard surface, then its flat cap system is just the opposite checkerboard surface.  The cap systems we employ in this paper will all be flat cap systems. 

\subsection{First example}

In practice, capping structures ${W}$ are useful for determining, e.g., whether $F$ is $\pi_1$-essential, by helping one either find an algebraic compressing disk $D$ or prove that none exists. This works as follows.  One hypothesizes an algebraic compressing disk $D$, and assumes that, among all such disks, $D$ has been chosen to lexicographically minimize $|D\pitchfork{W}|$ and $|\partial D\cap{W}|$.\footnote{Given transverse submanifolds $S,T$ of some ambient manifold, the notations $|S\cap T|$ and $|S\pitchfork T|$ carry the same meaning; we use the latter notation if we wish to emphasize or clarify that $S$ and $T$ are transverse.}
(In some cases, it is redundant to minimize the second quantity; this is the case, for example, if every arc of $W\cap\text{int}(F)$ abuts disks of $W\cut F$ on both sides of $F$.) Then $D$ and $W$ intersect only in arcs, no circles, and no arc of $\partial D\cut W$ is parallel through $F$ to ${W}$. 
In some cases, one reaches a contradiction by finding that no outermost disk of $D\cut W$ is possible. For example:

\begin{prop}\cite[Lemma 3.3]{ozawa11}\label{P:CBEss}
If $P$ is a connected prime reduced alternating link diagram, then both checkerboard surfaces $B$ and $W$ of $P$ are $\pi_1$-essential.
\end{prop}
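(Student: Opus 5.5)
By symmetry it suffices to treat $B$, using $W$ as its cap system (Example \ref{Ex:ChessboardCaps}; $P$ is connected). We must rule out two things: that $B$ is a M\"obius band spanning the unknot, and that $B$ admits an algebraic compressing disk $D$.

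The first is immediate. A connected reduced alternating diagram of the unknot with a M\"obius band checkerboard surface would be the one-crossing diagram, which is not reduced. Alternatively, one can note that a reduced alternating diagram is nontrivial, by Kauffman--Murasugi--Thistlethwaite.

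For the second, suppose $D$ is an algebraic compressing disk for $B$. We lift $D$ to $\wt{D}\subset S^3\cut B$ and choose it to lexicographically minimize $|D\pitchfork W|$ and then $|\partial D\cap W|$.

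\textbf{Step 1: the intersection consists only of arcs.} Each component of $W\cut B$ is a disk meeting $\wt{B}$ on both sides. A standard innermost-circle argument, using irreducibility of the balls into which $\wt{W}$ cuts $S^3\cut B$, removes all circles of $D\cap W$. Minimality then guarantees that no arc of $\partial D\cut W$ is parallel in $B$ into $W$; otherwise we could isotope $D$ across and reduce the complexity.

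\textbf{Step 2: the intersection is nonempty.} If $D\cap W=\varnothing$, then $\wt{D}$ lies in a single ball of $S^3\cut(B\cup W)$ with boundary on one disk of $\partial(S^3\cut B)\cut\wt{W}$. So $\partial D$ bounds a disk in that face, contradicting that $\partial\wt{D}$ is essential in $\partial(S^3\cut B)$.

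\textbf{Step 3: an outermost disk gives a contradiction.} Take an arc of $D\cap W$ that is outermost in $D$, cutting off a subdisk $D_0$ with $\partial D_0=\delta\cup\beta$, where $\delta\subset W$ and $\beta\subset\partial D$ lies in $B$.

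The polyhedron containing $D_0$ is one of the two balls above or below $S^2$. Its boundary is a copy of $S^2$ decomposed by the diagram graph into black and white faces, with the crossings cut open. The arc $\beta$ runs in a single black region (or through a crossing band) between two points of $W$, while $\delta$ runs across a single white face.

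By minimality, $\beta$ cannot return to the same edge of that white face. The alternating condition then controls where $\beta$ can go: as one traverses the boundary of a face, over- and under-crossings alternate, and so the local crossing structure in a given ball forces $\beta$ to enter and leave the black region at one particular crossing. Hence the endpoints of $\delta$ lie on two edges of the same white face that meet at a crossing adjacent to the same black region. Reducedness rules out a nugatory crossing there, and primeness (together with connectivity) rules out a separating circle meeting the diagram twice. Either situation would have to occur if $\delta\cup\beta$ enclosed something nontrivial. So the only remaining possibility is that $\beta$ is parallel into $W$, which contradicts Step 1.

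\textbf{Main obstacle.} I expect the hardest step to be the case analysis in Step 3. The delicate part is tracking precisely how $\beta$ can pass through crossing bands and showing that alternation forces the local picture above. I would handle it by drawing the face of $S^2\cut(B\cup W)$ inside each ball and checking the finitely many ways $\beta$ can meet its vertices.
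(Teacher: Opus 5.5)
Your outline matches the paper's proof. You cap $B$ with $W$, take $D$ minimizing $|D\pitchfork W|$, and note that $D\cap W\neq\varnothing$ because $W$ cuts $B$ into disks. You then seek a contradiction from an outermost subdisk $D_0$, with $\beta$ in a disk $B_0$ of $B\cut W$ and $\delta$ in a disk $W_0$ of $W\cut B$.

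The paper closes this in one sentence. Because $P$ is prime and alternating, $B_0$ and $W_0$ meet, in the ball containing $D_0$, along at most one vertical arc $v_0$. So both endpoints of $\beta$ lie on $v_0$, and $\beta$ is parallel through $B_0$ into $W$.

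Your Step 3 is meant to supply this fact, but as written it does not.
\begin{itemize}
\item Alternation alone does not force $\beta$ to ``enter and leave at one particular crossing.'' In a composite alternating diagram, a black region and a white region can share two edges of $P$, and then they meet along two different vertical arcs in each ball. So primeness is needed at exactly that point.
\item Primeness does not rule out circles meeting $P$ twice. It says that such a circle has no crossings on one side.
\item Most importantly, your closing dichotomy skips a case. You list a nugatory crossing, a nontrivial two-point circle, or else $\beta$ parallel into $W$. Missing is the case where the endpoints of $\beta$ lie on the vertical arcs at the two \emph{ends of a single common edge} $e$ of the two regions. There $\partial D_0$ encloses only a crossingless subarc of $e$, so neither reducedness nor primeness is violated. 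Yet $\beta$ joins two distinct vertical arcs and is not parallel into $W$. This is precisely the case that alternation must exclude.
\end{itemize}

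Here is how to make Step 3 precise.
\begin{itemize}
\item At a crossing $c$, each ball sees the vertical arc twice. Each copy is the edge between the black and white faces on either side of one end of a single strand through $c$. For one ball these are the two ends of the under-strand; for the other ball, the two ends of the over-strand.
\item Every edge of an alternating diagram runs from an over-crossing to an under-crossing. Hence, in each ball, the vertical-arc edges correspond bijectively to the edges of $P$, each joining the two regions on either side of its edge.
\item So if $B_0$ and $W_0$ met along two distinct vertical arcs in one ball, their regions would share two distinct edges of $P$. A circle through those two regions, crossing those two edges, would then have crossings on both sides. (A crossingless side would force the two edges to coincide, using connectivity.) This contradicts primeness.
\end{itemize}
This gives the paper's one-arc fact, hence the contradiction with minimality.

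Reducedness plays no role in this step: the two copies of a single vertical arc in one ball never join the same pair of faces. It is needed only for the M\"obius band clause, which you handled separately.
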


This is a well-known fact---in the lemma cited above, Ozawa provides three earlier references.  Alternatively, in \cite{fkpguts,fkpquasi} Futer-Kalfagianni-Purcell describe polyhedral decompositions of link complements that are closely related to the ones decompositions we describe here.  They call a decomposition {\it prime} if no pair of faces meets along more than one edge.  Their decompositions need not come from a pair of spanning surfaces, but when a prime decomposition does come from two spanning surfaces, say $B$ and $W$, it follows that $B$ and $W$ are both $\pi_1$-essential. In a nutshell, the proof is that if $B$ admitted an algebraic compressing disk $D$, then there would be an outermost disk of $D\cut W$, but primeness prohibits this. In our parlance, the proof goes like this:

\begin{proof}[Proof of Proposition \ref{P:CBEss}]
It suffices to show that $B$ is incompressible, as it can be neither \MobPos~ nor \MobNeg, and by symmetry the same argument will apply to $W$. If $B$ has a(n algebraic) compressing disk, choose one, $D$, that intersects $W$ minimally.  Now $D\cap W\neq\varnothing$ because $W$ cuts $B$ into disks (as $P$ is connected), so there is an outermost disk $D_0$ of $D\cut W$, whose boundary consists of an arc $\beta$ that lies in some disk $B_0$ of $B\cut W$ and an arc $\omega$ that lies in some disk $W_0$ of $W\cut B$.  Yet, because $P$ is prime and alternating, $B_0$ and $W_0$ meet along no more than one arc $v_0$.  Hence, $\beta$ is parallel through $B_0$ to $v_0$, contradicting minimality. 
\end{proof}

When a given polyhedral decomposition is not prime, meaning that outermost disks of $D\cut W$ are possible, Futer-Kalfagianni-Purcell describe how it is sometimes possible to refine it to produce a prime decomposition. Here, we take a different approach.  We keep the decomposition as it is and work ``upward" through the subdisks of $D\cut W$ according to the following notion of {\it height}.  

\subsection{Height}\label{S:Height}

\begin{definition}\label{D:height}
Given a disjoint union $A=\bigsqcup_{i\in I}\alpha_i$ of $\ell$ properly embedded arcs in a disk $D$, let $T$ be the tree with one vertex for each subdisk comprising $D\cut A$ in which two vertices are adjacent whenever the corresponding subdisks abut.  

Define the {\bf height} of each subdisk of $D\cut A$ recursively as follows. Let $T_0=T$. Outermost disks of $D\cut A$, corresponding to leaves in $T_0$, have height 0.   For $i\geq 1$, let $T_{i}$ be the tree obtained from $T_{i-1}$ by deleting each leaf and its incident edge.  Disks of $D\cut A$ that correspond to leaves in $T_i$ have height $i$.   Figure \ref{Fi:TreeLabel} shows an example.

For each half-disk in $D$ (as cut by $A$) the height of the half-disk is the largest height among the subdisks it contains.
\end{definition}


\begin{prop}
Let $A=\bigsqcup_{i\in I}\alpha_i$ be a disjoint union of properly embedded arcs in a disk $D$, and let $D_0$ be a subdisk of $D\cut A$, say of height $n$.  Then either every subdisk of $D\cut A$ has height at most $n$, or there is exactly one half-disk in $D$ of height $n$ that contains $D_0$.
\end{prop}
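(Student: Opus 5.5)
The plan is to translate everything into statements about the tree $T$ and the pruned trees $T_i$ from Definition \ref{D:height}. A subdisk of $D\cut A$ is a vertex of $T$. Cutting $D$ along a single arc $\alpha_i$ corresponds to deleting one edge $e$ of $T$, so a half-disk corresponds to one of the two components of $T\setminus e$. With this dictionary, the height of a vertex $v$ is the index $i$ at which $v$ is deleted as a leaf of $T_i$. The height of a half-disk is the largest vertex height in the corresponding component.

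First I will record the basic facts. Deleting all leaves of a finite tree leaves a tree, or the empty set, so by induction each $T_i$ is a tree or empty. The vertex set of $T_i$ is exactly the set of vertices of height $\geq i$. A degenerate case needs care: if $T_i$ is a single vertex or a single edge, then all of its vertices are leaves, every vertex is deleted, and $T_{i+1}=\varnothing$. I will make sure the recursion is read this way.

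Now let $D_0$ have height $n$, and suppose some subdisk has height greater than $n$, i.e.\ $T_{n+1}\neq\varnothing$; otherwise we are in the first alternative. Then $D_0$ is a leaf of $T_n$, and $T_n$ is neither a single vertex nor a single edge. So $D_0$ has a unique neighbor $w$ in $T_n$, and $w$ is not a leaf of $T_n$, since otherwise $T_n$ would be the single edge $D_0w$. Hence $w\in T_{n+1}$.

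\textbf{Existence.} Let $e$ be the edge $D_0w$ and let $S$ be the component of $T\setminus e$ containing $D_0$. I claim no vertex of $S$ other than $D_0$ lies in $T_n$. Suppose $u\neq D_0$ were such a vertex. The path from $u$ to $w$ in the tree $T_n$ would have to pass through $D_0$, because in $T$ every path from $S$ to $w$ uses $e$. That path would then give $D_0$ a second neighbor in $T_n$, a contradiction. So every vertex of $S$ has height at most $n$, and $D_0$ has height exactly $n$. Therefore the half-disk corresponding to $S$ has height $n$ and contains $D_0$.

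\textbf{Uniqueness.} Let $e'$ be any edge, and let $S'$ be the side of $T\setminus e'$ containing $D_0$, with height exactly $n$. Then $S'$ contains no vertex of $T_{n+1}$. Since $T_{n+1}$ is connected and contains $w$, it lies entirely on the other side of $e'$. In particular $e'$ separates $D_0$ from $w$. But $D_0$ and $w$ are adjacent, so $e'=e$. Hence the half-disk found above is the only one.

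The main obstacle I expect is purely bookkeeping. The key points are the connectivity of each $T_i$ and the correct handling of the case where $T_n$ is a single edge, where both endpoints receive height $n$. That case is precisely what forces the first alternative in the statement.
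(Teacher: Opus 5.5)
Your proof is correct and follows the same idea as the paper's. $D_0$ has at most one neighbor of height exceeding $n$ (your $w$), the desired half-disk is the one cut off by the arc between $D_0$ and $w$, and if no such neighbor exists then every height is at most $n$. Your tree formulation makes explicit the existence and uniqueness checks, and the degenerate case where $T_n$ is a single vertex or single edge, all of which the paper's three-sentence argument leaves implicit.
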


\begin{proof}
There is at most one subdisk of $D\cut A$ incident to $D_0$ whose height exceeds $n$.  If there is such a subdisk, then there is only one half-disk of $D$ that contains $D_0$ but not this other subdisk.  If there is no such subdisk, then every subdisk of $D\cut A$ has height at most $n$.
\end{proof}

\begin{figure}[h]
\begin{center}
\labellist \small \hair 4pt
\pinlabel {$0$} at 50 100
\pinlabel {$\White{3}$} at 200 95
\pinlabel {$2$} at 252 40
\pinlabel {${2}$} at 330 78
\pinlabel {$\White{1}$} at 162 30
\pinlabel {$0$} at 127 12
\pinlabel {$0$} at 200 12
\pinlabel {$\White{0}$} at 290 14
\pinlabel {$\White{1}$} at 330 103
\pinlabel {$0$} at 323 126
%
\endlabellist
\includegraphics[height=1.5in]{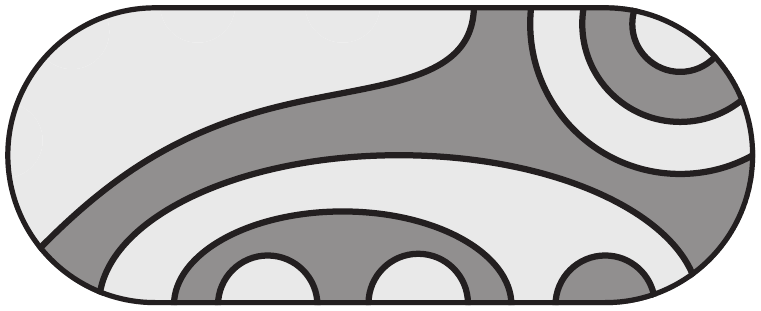}\hfill
\caption{A disk $D$ cut by arcs and labeled by height}
\label{Fi:TreeLabel}
\end{center}
\end{figure}
 


\subsection{Further examples}

In general, given a cap system $W$ for a surface $F$, one can try to determine whether or not $F$ is $\pi_1$-essential or geometrically essential by hypothesizing an appropriate compressing disk or $\partial$-compressing disk $D$ which lexicographically minimizes $|D\pitchfork{W}|$ and $|\partial D\cap W|$ and characterizing the possibilities for subdisks of $D\cut W$ of height 0, then for those of height 1, and so on, working ``upward.'' (When looking for a subdisk of height 1, e.g., we are really thinking about the half-disk of height 1 that contains it---this allows us to utilize the information we have already gathered about subdisks of height 0.)
This approach is useful from a problem-solving perspective, as the process may terminate either with a contradiction or with the successful construction of a compressing disk---the process is unsuccessful only if the cases proliferate too much. 

Several examples follow. In each, $F$ is some spanning surface, and $W$ is a flat cap system for $F$ as in Definition \ref{D:FlatCapSystem}, $D$ is a hypothesized algebraic compressing disk, which we assume has been isotoped to minimize $|D\cap W|$, and we characterize the possible subdisks of $D\cut W$ according to height.  The motor of the argument in each example is that each arc of $\partial D\cap C$ must contain an endpoint of an arc of $D\cap W$, and no arc of $\partial D\cut C$ has both endpoints on the same crossing ball (due to the minimality of $|D\cap W|$). We keep our arguments terse by allowing this motor to run silently.

Heuristically, the reader can sidestep the formality of cap systems by treating a cap system interchangeably with the projection sphere $S^2$ itself.  Technically, this is not quite correct, because we need to be more careful than this near crossings (and, sometimes, other places that $\text{int}(F)$ intersects $S^2$).  Still, it can be useful.

In Figures \ref{Fi:Pretzel222333}-\ref{Fi:GabaiConverse}, which accompany these examples, we will shade possible height 0 (outermost) subdisks of $D\cut W$ red or blue, according to whether they are on the near or far side of the projection sphere $S^2$. For subdisks of $D\cut W$ of positive height, however, shading would obscure too much of the figure, so instead we typically show just $\partial D$ (in red and blue according to whether the incident component of $D\cut W$ is on the near or far side of $S^2$, solid or dotted according to front or back) and $D\cap W$ (in purple). The same applies to Figures \ref{Fi:BadPlumb}-\ref{Fi:GeomEssStep6}.

\begin{figure}[h]
\begin{center}
\;\hfill\includegraphics[height=1.5in]{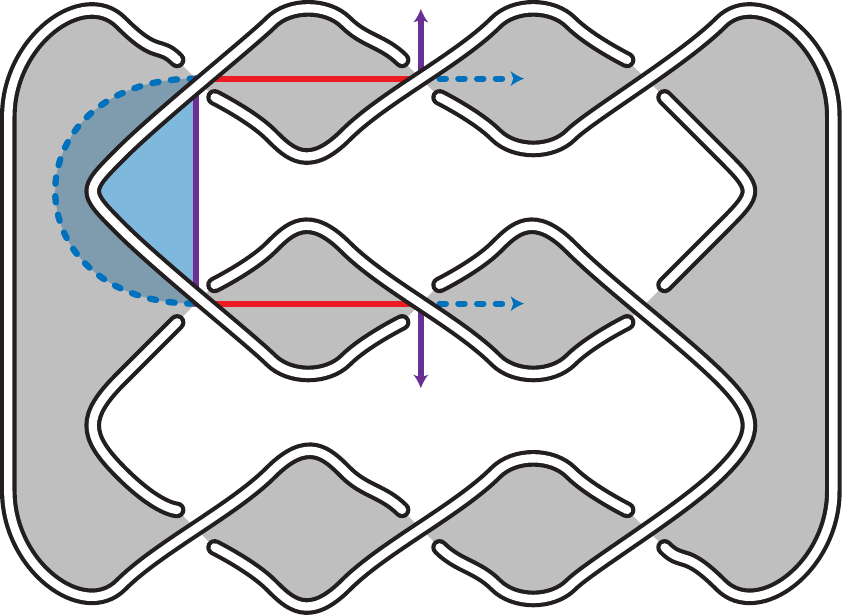}\hfill
\includegraphics[height=1.5in]{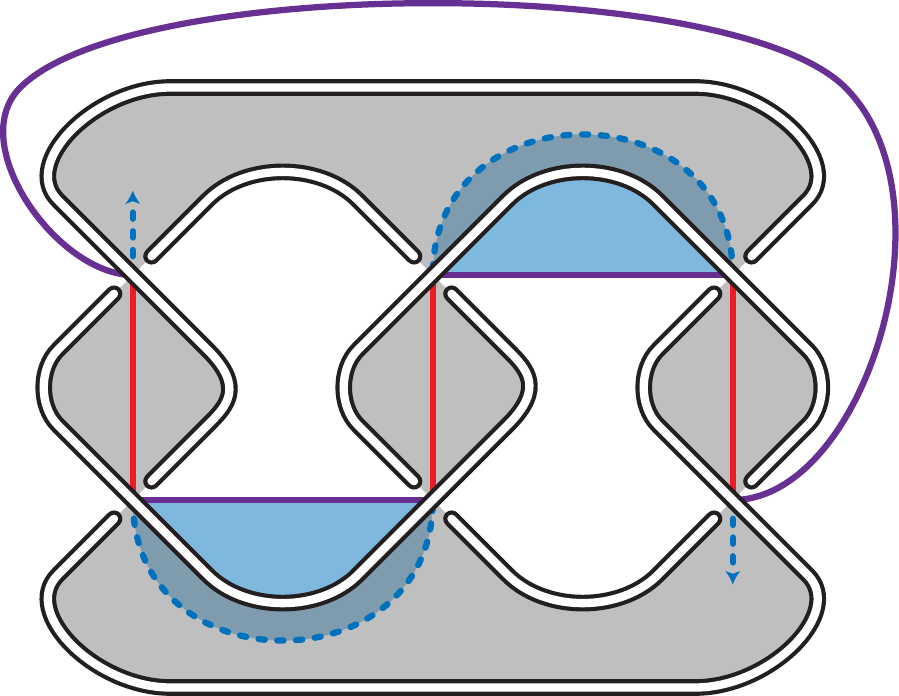}\hfill\;
\caption{$\pi_1$-essential checkerboard surfaces for the $(-3,3,-3)$ and $(-2,2,-2)$ pretzel links}
\label{Fi:Pretzel222333}
\end{center}
\end{figure}

\begin{example}
The $(-4,4,-4)$ pretzel surface $F$ shown left in Figure \ref{Fi:Pretzel222333} and its flat cap system admit subdisks of height 0--one is shown, and the others are the same up to symmetry (two of them on the other side of $S^2$). Yet, they admit no subdisk of height 1, so $F$ is $\pi_1$-essential.
\end{example}

\begin{example}\label{Ex:222}
The $(-2,2,-2)$ pretzel surface $F$ shown right in Figure \ref{Fi:Pretzel222333} and its flat cap system admit subdisks of height 0 and 1 (shown, up to symmetry), but not of height 2, so $F$ is $\pi_1$-essential. 
\end{example}

\begin{rem}
In Example \ref{Ex:222}, if we were interested only in proving that $F$ is $\pi_1$-essential (rather than building familiarity with our techniques), we could have provided either of the following proofs:\begin{enumerate} 
\item Compressing $F$ would yield a disjoint union of a disk and an annulus, but this is impossible since each pair of link components has nonzero linking number.
\item $F$ can be constructed by performing a Stallings twist on a boundary connect sum of two Hopf bands of opposite signs. Therefore, $F$ is a fiber surface, hence incompressible.
\end{enumerate}
\end{rem}

\begin{figure}[h]
\centering
 \labellist\small
 \pinlabel {$F_0$} at 190 65
 \pinlabel {$F_1$} at 492 5
 \pinlabel {$F$} at 790 65 
 \endlabellist
\includegraphics[width=
2.5in]{figures/GabaiConverseB11}\hfill\raisebox{.5in}{$*$}\hfill
\raisebox{.125in}{\includegraphics[width=
1in]{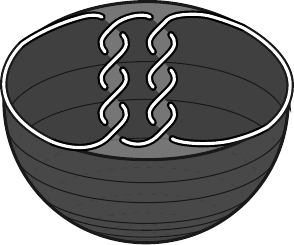}}\hfill\raisebox{.5in}{$=$}\hfill
\includegraphics[width=
2.5in]{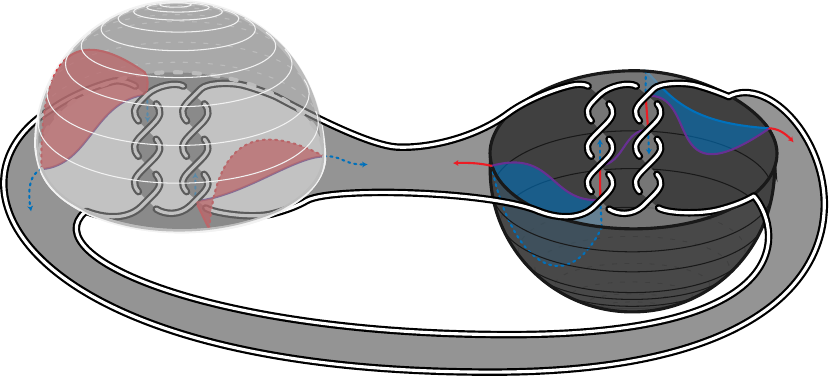}
\caption{An essential Seifert surface $F$ obtained by plumbing an essential surface onto a compressible one.}
\label{Fi:GabaiConverse1}
\end{figure}

\begin{example}\label{Ex:GabaiConverse}
Consider the surface $F=F_0*F_1$ constructed in Figure  \ref{Fi:GabaiConverse} by plumbing an essential pair of pants $F_1$ onto a compressible surface $F_0$ (obtained by plumbing another such pair of pants onto a trivial annulus).  The flat cap system $W$ for $F$ admits subdisks of height 0 (shown), but none of height 1. Therefore, $F$ is indeed $\pi_1$-essential.
\end{example}

Example \ref{Ex:GabaiConverse} demonstrates that it is possible to plumb an essential Seifert surface onto a compressible one in a way that yields an essential surface.  Gabai gave a similar example of this phenomenon in Figure 2 of \cite{gab1}.  The next example shows that it is in fact possible to plumb two Seifert surfaces, {\it both} of them compressible, in a way that yields an essential surface. To the author's knowledge, it is the first such example in the literature.

\begin{example}\label{Ex:StrongConverse}
Consider the surface $F$ shown right in Figure \ref{Fi:GabaiConverse}, obtained by plumbing the two compressible Seifert surfaces $F_1,F_2$ shown left in each figure.   (Figure \ref{Fi:GabaiConverseHeight1} may help the reader visualize $F_2$.)  We claim that $F$ is essential. Indeed, with the flat cap system from Figure \ref{Fi:GabaiConverse}, it admits four subdisks of height 0, as shown in the figure. It also admits a subdisk $D$ of height 1, as shown in Figure \ref{Fi:GabaiConverseHeight1}. These, however, are the only subdisks of any heights that it admits.   
\end{example}

\begin{figure}[h]
 \begin{center}
\includegraphics[width=
2.5in]{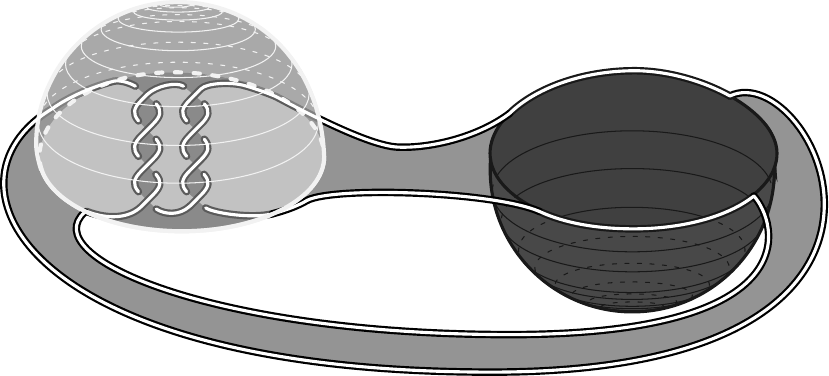}\hfill\raisebox{.5in}{$*$}\hfill
\raisebox{.125in}{\includegraphics[width=
1in]{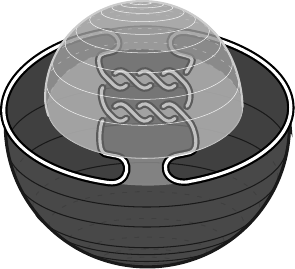}}\hfill\raisebox{.5in}{$=$}\hfill
\includegraphics[width=
2.5in]{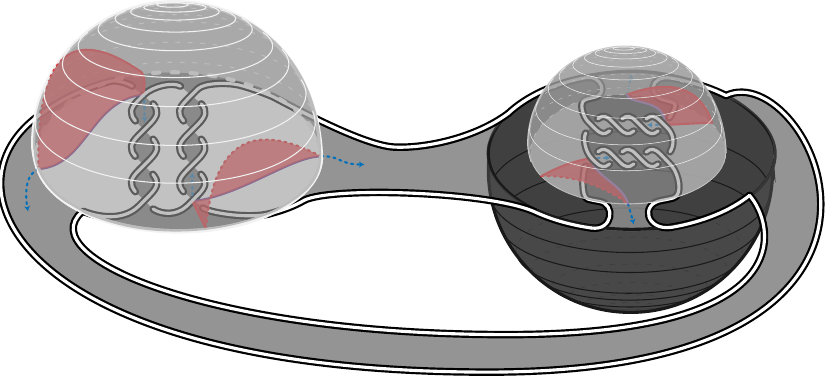}
\caption{An essential Seifert surface $F$ obtained by plumbing together two compressible surfaces}\label{Fi:GabaiConverse}
\end{center}
\end{figure}

\begin{figure}[h]
 \begin{center}
\includegraphics[width={4.5in}]{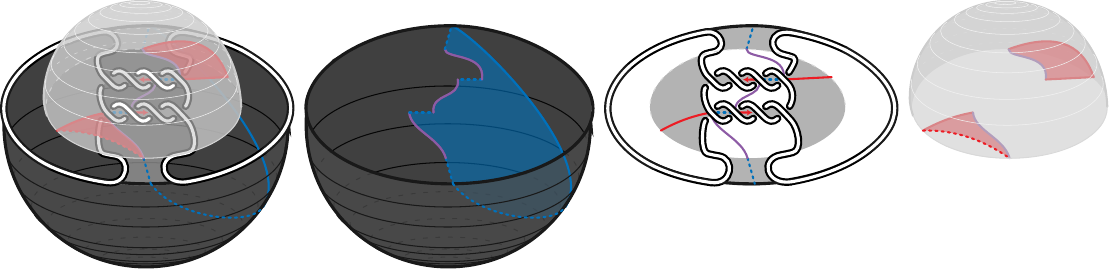}
\caption{Far left: the height 1 subdisk $D$ for the surface $F=F_1*F_2$ shown right in Figure \ref{Fi:GabaiConverse}. Center and right: the parts of $F_2\subset F$ and $D$ below, near, and above $S^2$.}
\label{Fi:GabaiConverseHeight1}
\end{center}
\end{figure}

The two preceding examples motivate several natural questions, to which we will return in \textsection\ref{S:Final}.  For now, however, we proceed to our main result.

\section{Geometric essentiality under unoriented plumbing}\label{S:BadPlumb}

\begin{figure}[h]
\begin{center}
\labellist\small\hair 4pt
\pinlabel plumbing at 800 145
\endlabellist
\includegraphics[width={6.5in}]{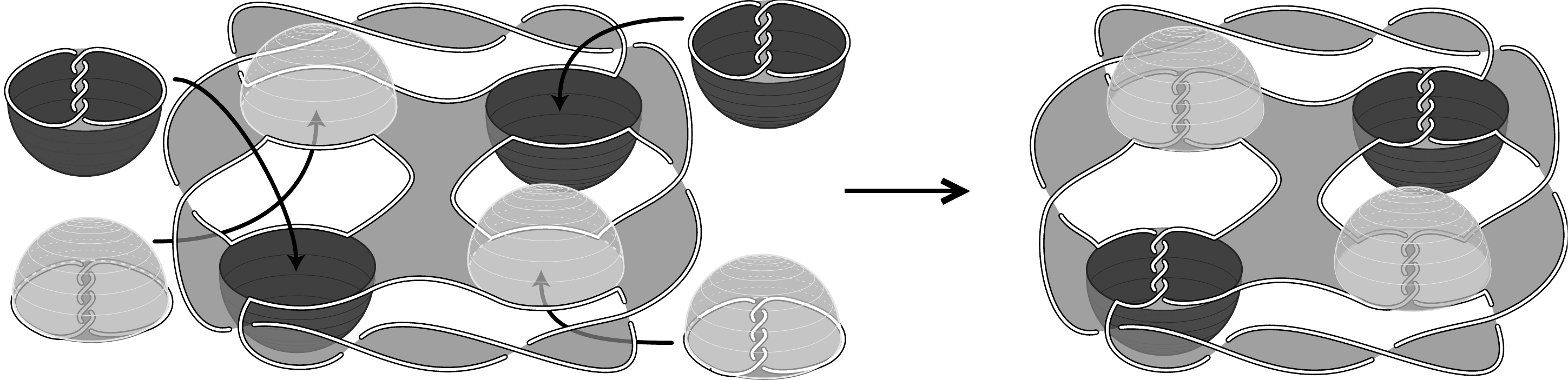}
\caption{Constructing a surface $F_1$ that is geometrically essential but $\pi_1$-inessential}
\label{Fi:Ex2}
\end{center}
\end{figure}

In this section, we will use cap systems and height to prove that the surface $F_1$ constructed in Figure \ref{Fi:Ex2} is geometrically essential, giving our first main result:

 \begin{theorem}\label{T:BadPlumb}
 A Murasugi sum of geometrically essential surfaces need not be geometrically essential.
 \end{theorem}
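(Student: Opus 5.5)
The plan is to prove Theorem \ref{T:BadPlumb} with one explicit counterexample. The construction in Figure \ref{Fi:Ex2} plumbs a second factor onto a modification of the surface $F_0$ from Figure \ref{Fi:Ex1}, right. The plumbing is chosen so that the sum $F_1$ is geometrically essential but not $\pi_1$-injective. The algebraic compressing disk of Example \ref{Ex:Geometrically_Incompressible} survives, with immersed boundary.

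We then deplumb $F_1$ along a disk $U$ whose cap $V$ is chosen so that the pieces are: one factor geometrically essential (ideally even $\pi_1$-essential, for instance an unknotted annulus or M\"obius band with enough twists), and the other factor $F_1$ itself. Equivalently, we plumb a third geometrically essential surface onto $F_1$ so that the immersed boundary of the algebraic compressing disk can be resolved into an embedded essential circle. The resulting sum $F=F_1*F_2$ is then geometrically compressible.

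The steps, in order, are as follows.
\begin{enumerate}
\item Exhibit an explicit geometric compressing disk for $F$. As in Example \ref{Ex:Geometrically_Incompressible}, this is drawn as red and blue arcs on the front and back of $F$, joined by a purple arc in $S^2$. Check that its boundary is essential in $F$. This is immediate: it is nontrivial in homology, or compressing changes the Euler characteristic in a way incompatible with being a spanning surface disk.
\item Verify that the second factor is geometrically essential. If it is an unknotted twisted annulus or band with at least two half-twists (or a reduced alternating checkerboard surface), this follows from Proposition \ref{P:CBEss}, together with the remark that $\pi_1$-essential implies geometrically essential.
\item Prove that $F_1$ is geometrically essential. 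This is the content of Proposition \ref{P:GeomEss}.
\end{enumerate}

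The main obstacle is step 3. Since $F_1$ is $\pi_1$-inessential, we cannot simply rule out all algebraic compressing disks. We must instead show that every cap $D$ with embedded boundary is either trivial or a meridional $\partial$-compression. The approach is the height argument of \textsection\ref{S:Height}, using the flat cap system $W$ of Definition \ref{D:FlatCapSystem}.

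Suppose $D$ is a geometric compressing or $\partial$-compressing disk minimizing $|D\pitchfork W|$ and then $|\partial D\cap W|$. We classify the outermost (height $0$) subdisks of $D\cut W$, which should appear only near the twisted regions. We then classify half-disks of height $1$, $2$, and so on, proceeding as in Examples \ref{Ex:222} and \ref{Ex:StrongConverse}. At each height, every surviving configuration must be one of two kinds:
\begin{itemize}
\item it is forced into the immersed pattern of the known algebraic compressing disk, whose boundary crosses itself, so it cannot be embedded; or
\item it is parallel to $W$, contradicting minimality.
\end{itemize}
The process terminates when no subdisk of some height $n$ exists. The delicate point is tracking front versus back attachment along $\partial D$, so that the embeddedness of $\partial D$ actually excludes the self-intersecting configurations. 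For $\partial$-compressing disks, we must also handle arcs of $\partial D$ on $L$ and show that they are $\partial$-parallel.
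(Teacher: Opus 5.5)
Your plan matches the paper's proof: the paper plumbs a Hopf band onto the surface $F_1$ of Figure~\ref{Fi:Ex2}, displays a geometric compressing disk for the sum in Figure~\ref{Fi:BadPlumb}, and puts the real work into Proposition~\ref{P:GeomEss}. That proposition is the height argument on the flat cap system that you outline: no geometric compressing disk has all subdisks of height at most $1$ (although an algebraic one does), height-$2$ configurations cannot be completed, and the $\partial$-compressing case adds only one extra outermost type. Two corrections: in step 3 the goal is that no geometric compressing or $\partial$-compressing disk exists at all, since a meridional $\partial$-compression is a witness of inessentiality rather than an allowed outcome; and you should commit to a specific second factor and plumbing (the paper's Hopf band) rather than leaving them open, because the counterexample depends on that choice.
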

 
 \begin{proof}
 By plumbing a Hopf band onto the surface in Figure \ref{Fi:Ex2} as shown in Figure \ref{Fi:BadPlumb}, one can obtain a geometrically compressible surface. Indeed, the boundary of a compressing disk and its intersection with the projection sphere are colored in the rightmost part of the figure. The theorem now follows the following proposition.
\end{proof}
 
 \begin{figure}[h]
 \begin{center}
  \labellist
\small\hair 4pt
\pinlabel {${*}$} [l] at 475	 150
\pinlabel {${=}$} [l] at 640 150
\endlabellist
\includegraphics[width={6.5in}]{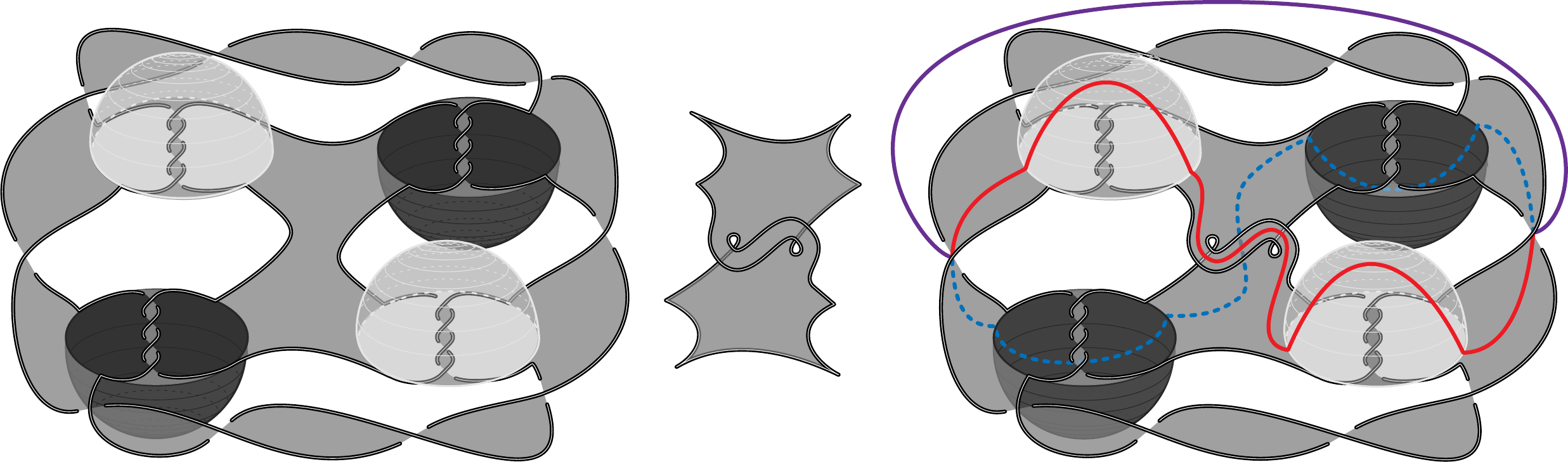}
\caption{A geometrically inessential plumbing of geometrically essential surfaces}
\label{Fi:BadPlumb}
\end{center}
\end{figure}

\begin{prop}\label{P:GeomEss}
The surface $F_1$ constructed in Figure \ref{Fi:Ex2} is geometrically essential but $\pi_1$-inessential.
\end{prop}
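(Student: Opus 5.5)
The proof splits into an easy half ($\pi_1$-inessential) and a hard half (geometrically essential).

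\emph{$\pi_1$-inessentiality.} The surface $F_1$ of Figure \ref{Fi:Ex2} is built from the surface $F_0$ of Figure \ref{Fi:Ex1}, right, which carries an algebraic compressing disk $D$ with immersed boundary (Example \ref{Ex:Geometrically_Incompressible}). I would check that the plumbing in Figure \ref{Fi:Ex2} is performed away from $D$, so that $D$ persists. Then $\partial D$ gives a loop on $\wt{F_1}$ that bounds a disk in $S^3\cut F_1$. It remains to show this loop is essential in $F_1$. The plan is to compute its image in $H_1(F_1)$, or in $H_1$ of a subsurface retracting onto the $F_0$ part, and see that it is nonzero. If the class vanishes, I would instead use that $F_1$ deformation retracts to a graph in which the loop is a nontrivial reduced word.

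\emph{Geometric incompressibility.} A geometric compression of $F_1$ along an embedded circle would reduce $-\chi$ and leave a spanning surface (plus possibly a closed piece). I would first use boundary slope, linking numbers, and Euler characteristic, as in Example \ref{Ex:Geometrically_Incompressible}, to rule out as many separating and nonseparating compression outcomes as possible. For the remaining cases I would use the flat cap system $W$ of Definition \ref{D:FlatCapSystem}. Suppose $D$ is an embedded compressing disk minimizing $|D\pitchfork W|$ and then $|\partial D\cap W|$. Then I would classify subdisks of $D\cut W$ by height (Definition \ref{D:height}), in order:
\begin{itemize}
\item list all possible height $0$ subdisks, up to the symmetry of the diagram;
\item then list the height $1$ half-disks built from those;
\item and so on upward.
\end{itemize}
The motor throughout is that each arc of $\partial D\cap C$ has an endpoint on $D\cap W$, and no arc of $\partial D\cut C$ returns to the same crossing ball.

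Embeddedness of $\partial D$ is the key new constraint. The algebraic disk inherited from $F_0$ appears as a legitimate sequence of subdisks, but its boundary is forced to cross itself. So the enumeration must show that every admissible tower either terminates in a contradiction or reproduces this forced self-intersection. This is also why $F_1$ is not simply a checkerboard surface of a prime alternating diagram, where Proposition \ref{P:CBEss} would apply directly.

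\emph{$\partial$-incompressibility.} For geometric $\partial$-incompressibility, by the remark following Definition \ref{D:GeomEss} it suffices to exclude $F_1\approx F'\natural\MobPos$ or $F'\natural\MobNeg$. I would run the same height argument on $\partial$-compressing disks, whose boundary arcs $\beta\subset L$ give extra outermost-type subdisks at $L$. Alternatively, I would show that the link or its components are incompatible with such a summand, for example via linking numbers or the boundary slope changing by $\pm2$.

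The main obstacle is the case proliferation in the height analysis at heights $\geq1$. The half-disks built there can mimic the immersed algebraic disk, and excluding them requires a careful argument that the boundary curves are forced to intersect. I would first try to organize the cases by which crossing balls $\partial D$ passes through, and use the symmetry of Figure \ref{Fi:Ex2} to reduce the number of cases.
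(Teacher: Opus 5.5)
Your proposal follows essentially the same route as the paper. The algebraic compressing disk of Figure \ref{Fi:Ex1} survives the plumbings, giving $\pi_1$-inessentiality. Geometric essentiality comes from minimizing $|D\pitchfork W|$ for the flat cap system and enumerating subdisks by height: embeddedness rules out the height-$\le 1$ configuration (which does exist as an algebraic disk), a would-be height-$2$ subdisk cannot be completed, and the same enumeration, with one extra outermost type at $L$, handles $\partial$-compressing disks. Your preliminary slope/linking/Euler characteristic filters and the slope-based alternative for the M\"obius-summand case are unnecessary, and the latter would likely not suffice on its own, but they do not affect the main argument.
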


\begin{proof}
Certainly $F_1$ is algebraically compressible (i.e. $\pi_1$-non-injective), as the plumbed-on annuli do not obstruct the algebraic compressing disk from Figure \ref{Fi:Ex1}.  To see that $F_1$ is geometrically essential, isotope $F_1$ to appear as the checkerboard surface shown in Figure \ref{Fi:GeomEssStep1} (again using the method described in the proof of Proposition \ref{P:StateToCB}),
and consider the resulting flat cap system ${W}$.  Write $\text{int}(F_1)\cap{W}=v$, so that $F_1\cap {W}=L\cup v$; $v$ consists of vertical arcs, one at each crossing.

Suppose first that $F_1$ is (geometrically) compressible.  Choose a compressing disk $D$ for $F_1$ which minimizes $|D\pitchfork{W}|$.  Then $D\cap{W}$ consists entirely of arcs, each with endpoints on distinct arcs of $v$. Likewise, each arc of $\partial D\cut v$ has endpoints on distinct arcs of $v$, and each point of $\partial D \cap v$ is an endpoint of an arc of $D\cap W$ and of two arcs of $\partial D\cut v$.  

Now consider the possibilities for the subdisks of $D\cut{W}$. Start with those of height 0, each of whose boundary consists of just two arcs, one in $D\cap W\subset {W}\cut v$ and one in $\partial D\cut v\subset F\cut v$.
Up to symmetry, there are three types of height 0 subdisks, two of each type in each ball of $S^3\cut(F_1\cup{W})$; all twelve possible subdisks appear in Figure \ref{Fi:GeomEssStep1}.

\begin{figure}[h]
\begin{center}
\includegraphics[width=
3.6in]{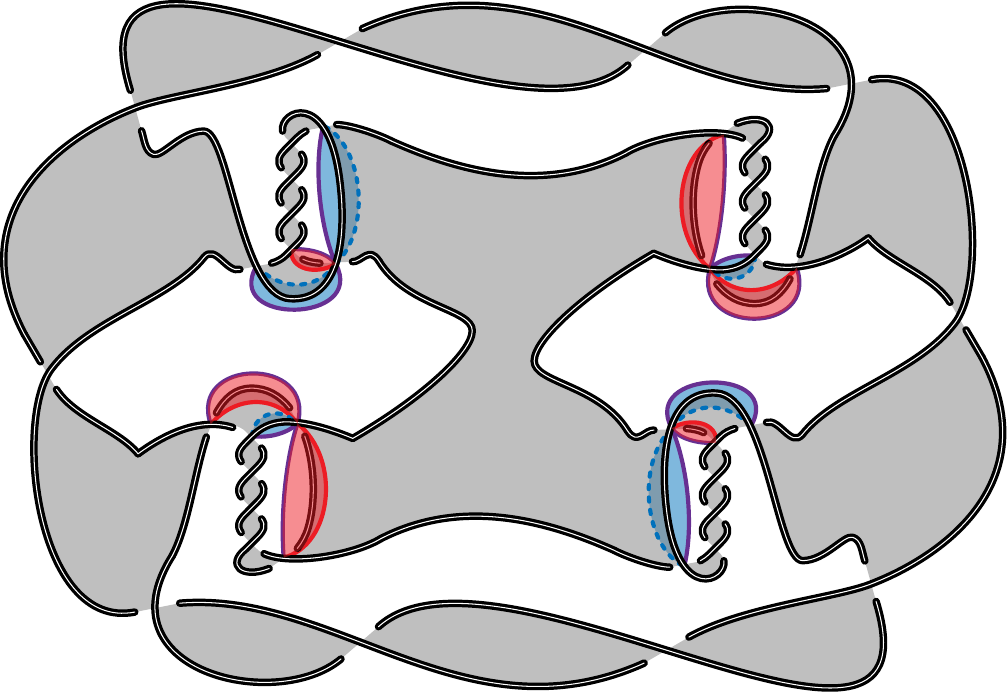}
\caption{Possible height 0 subdisks in the proof of Proposition \ref{P:GeomEss}}
\label{Fi:GeomEssStep1}
\end{center}
\end{figure}

There are three types of height 1 subdisks (up to symmetry) that abut a single outermost subdisk, having pattern $\raisebox{-2pt}{\includegraphics[height=12pt]{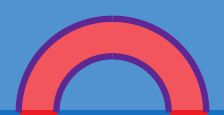}}$ (or the same with colors reversed)
; Figure \ref{Fi:GeomEssStep2} indicates all three types.  There are also two possible types of height 1 subdisks which abut two outermost subdisks, having pattern $\raisebox{-2pt}{\includegraphics[height=12pt]{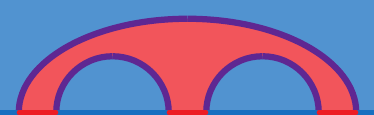}}$; Figure \ref{Fi:GeomEssStep3} indicates both types.  There are nopossible height 1 subdisks which abut more than two outermost subdisks.

\begin{figure}[h]
\begin{center}
\;\hfill
\includegraphics[width=
3.025in]{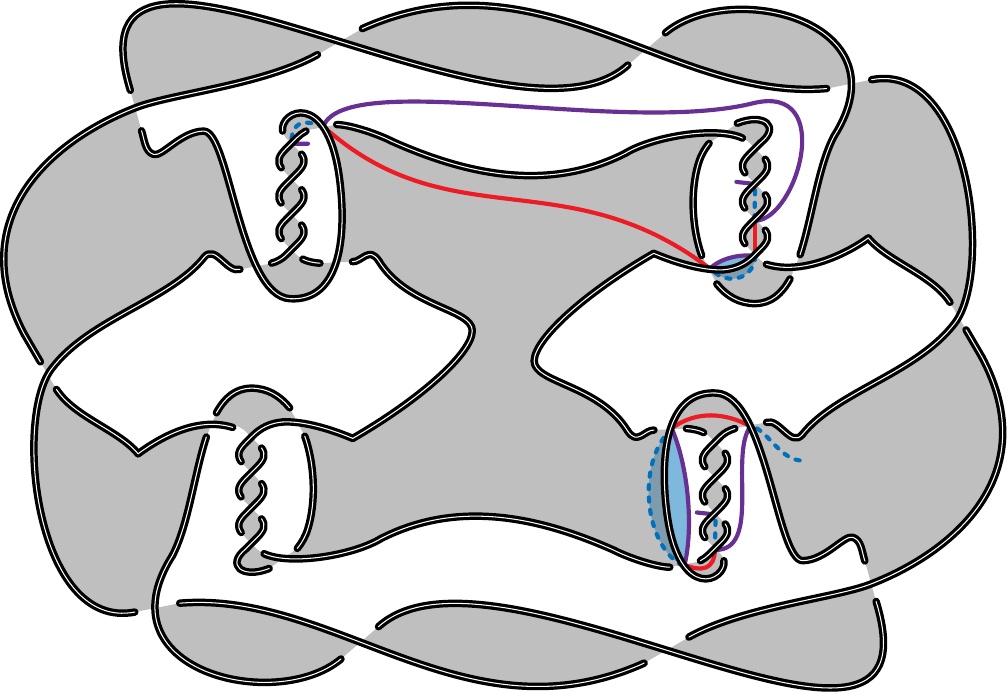}
\hfill
\includegraphics[width=
3.025in]{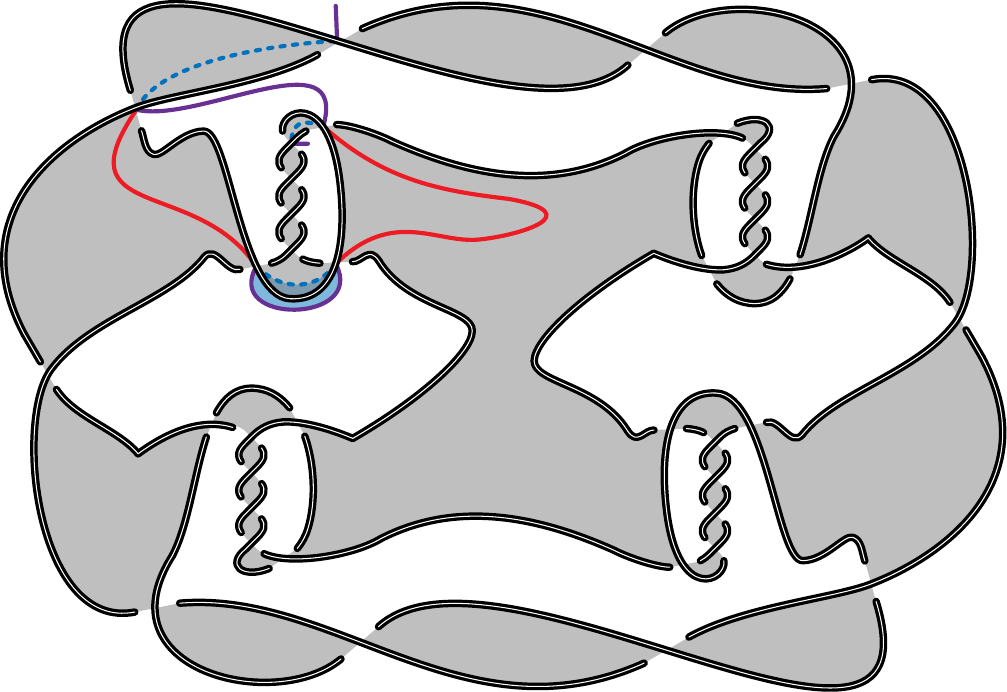}
\hfill\;
\caption{
Height 1 subdisks abutting a single outermost disk}\label{Fi:GeomEssStep2}
\end{center}
\end{figure}

\begin{figure}[h]
\begin{center}
\;\hfill
\includegraphics[width=
3.025in]{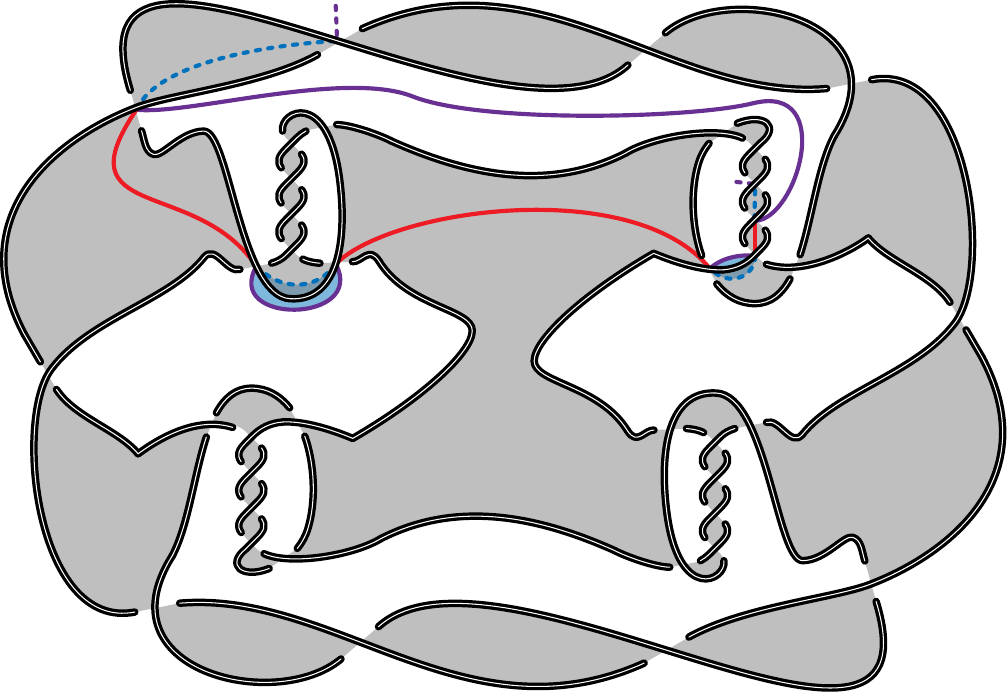}
\hfill
\includegraphics[width=
3.025in]{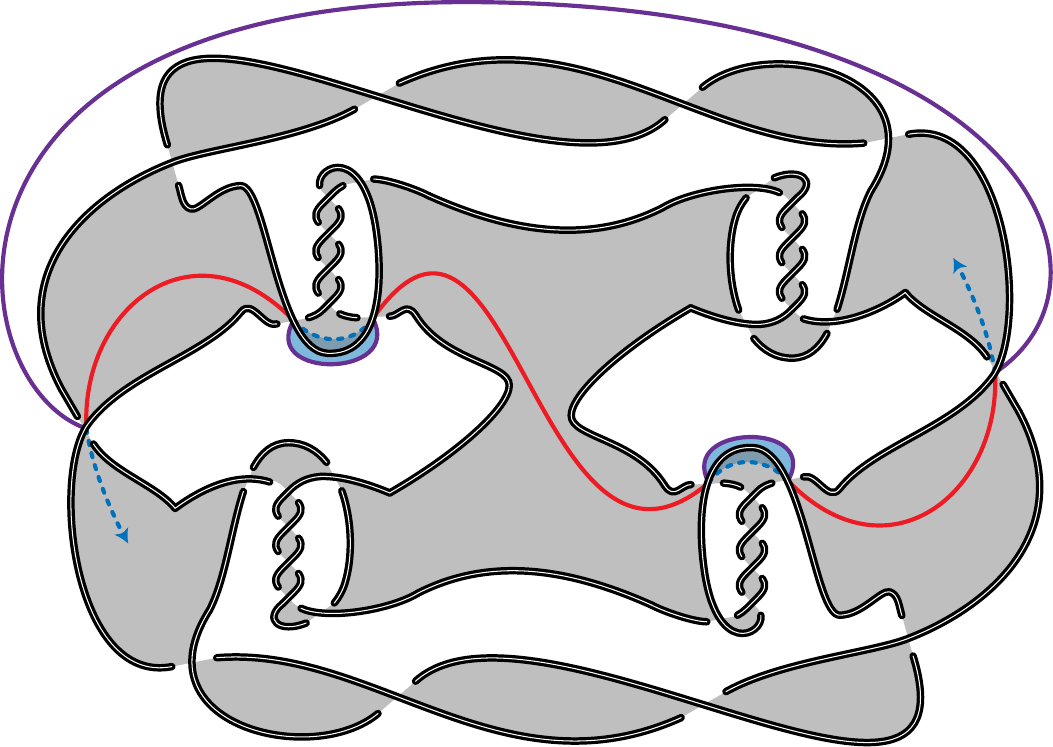}
\hfill\;
\caption{Height 1 subdisks abutting two outermost disks}
\label{Fi:GeomEssStep3}
\end{center}
\end{figure}

There are no geometric compressing disks in which every subdisk has height at most 1. (There is, however, such an {\it algebraic} compressing disk!) If there were a subdisk $D_2$ of height 2, it could not abut any type of subdisk shown in Figure \ref{Fi:GeomEssStep2}, nor on the left in Figure \ref{Fi:GeomEssStep3}, hence would necessarily abut the type shown right in Figure \ref{Fi:GeomEssStep3}. Yet, the furthest that one can then build out $D_2$ is as shown left in Figure \ref{Fi:GeomEssStep4}, where we have the pattern $\raisebox{-2pt}{\includegraphics[height=12pt]{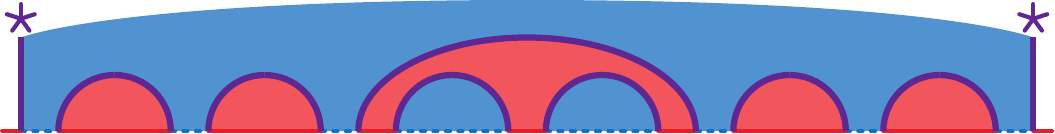}}$: from there, one can proceed no further within $D_2$ because the two starred purple arcs lie in different disks of $W\cut F$ and can cut off no subdisk of height 0 nor 1.  We thus conclude that $F$ is geometrically incompressible.

\begin{figure}[h]
\begin{center}
\;\hfill
\includegraphics[width=
3.15in]{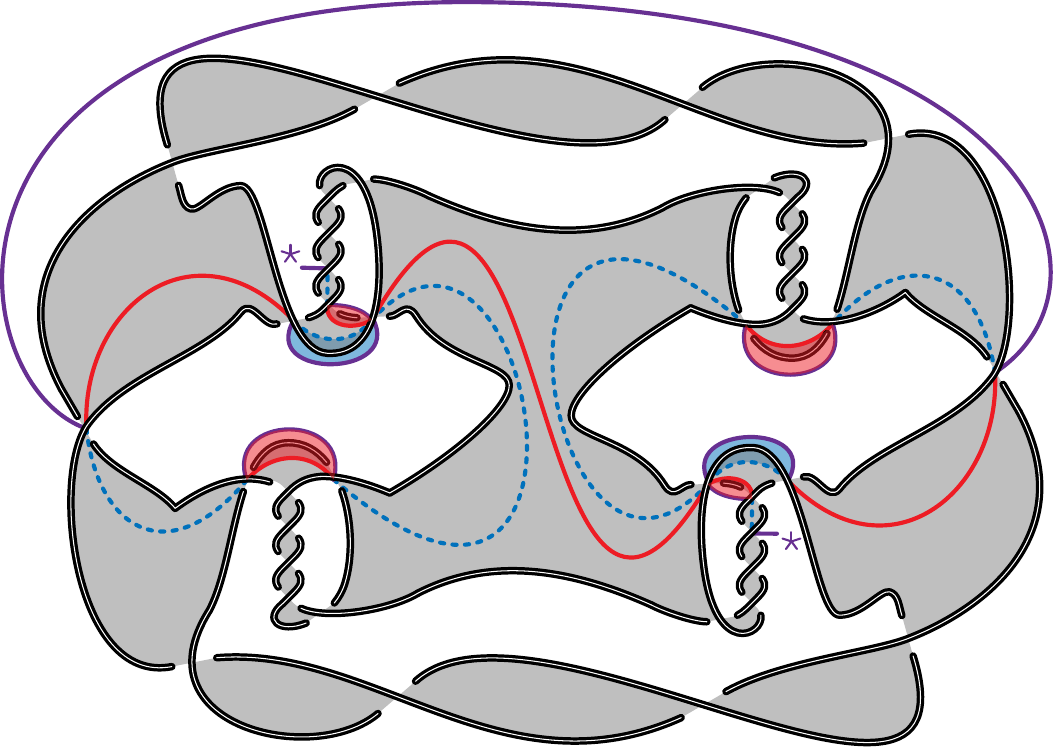}\hfill
\includegraphics[width=
3.15in]{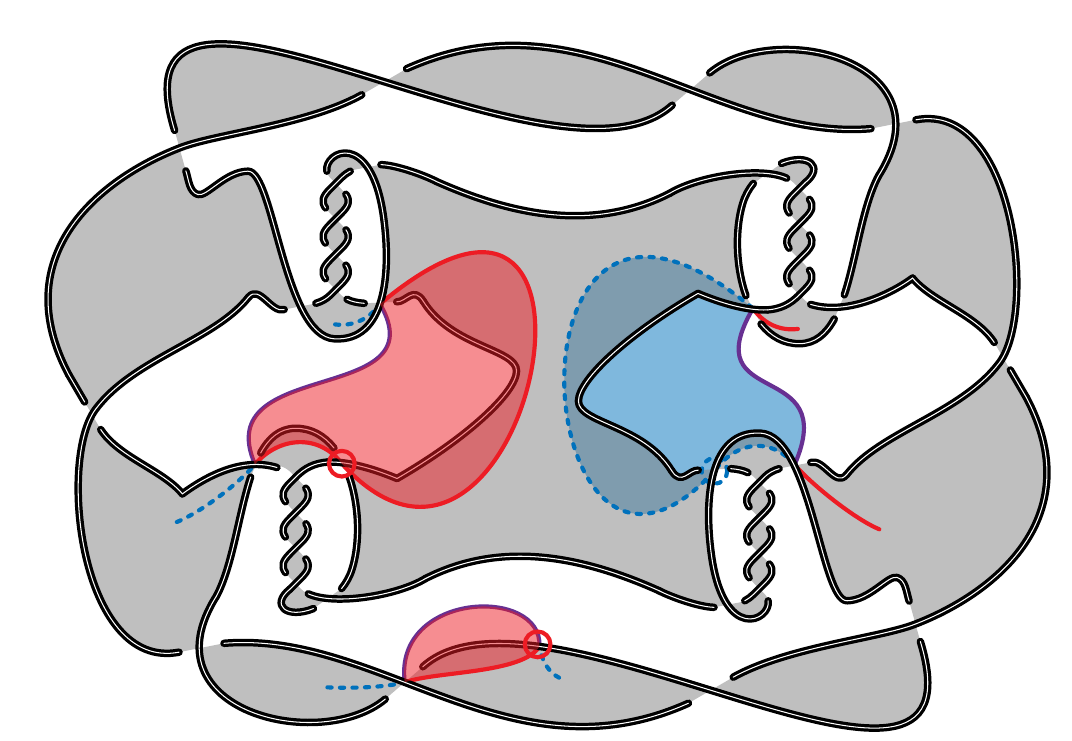}
\hfill\;
\caption{Left: the main contradiction in the proof of Proposition \ref{P:GeomEss}. Right: possible height 0 subdisks in a boundary compressing disk.}
\label{Fi:GeomEssStep4}
\end{center}
\end{figure}

Finally, we must adapt the argument above to show that $F_1$ is not geometrically $\partial$-compressible.  
Suppose otherwise.  As before, choose a $\partial$-compressing disk $D$ for $F_1$ which minimizes $|D\pitchfork{W}|$, and consider the possible configurations of the subdisks of $D\cut{W}$, starting with those of height 0. 

In addition to the types from Figure \ref{Fi:GeomEssStep1}, there is, up to symmetry, one additional type of possible outermost disk of $D\cut{W}$. Figure \ref{Fi:GeomEssStep4}, right, shows two of the four examples of this type of subdisk and, at the bottom, an example of another type of outermost subdisk which would contradict minimality. This additional type of subdisk, however, cannot abut a subdisk of height 1.  In fact, the only types of height 1 subdisks are still those in Figures \ref{Fi:GeomEssStep2}-\ref{Fi:GeomEssStep3}.  Considering subdisks of height 2 leads to the same contradiction as before, albeit with an additional case $\raisebox{-2pt}{\includegraphics[height=12pt]{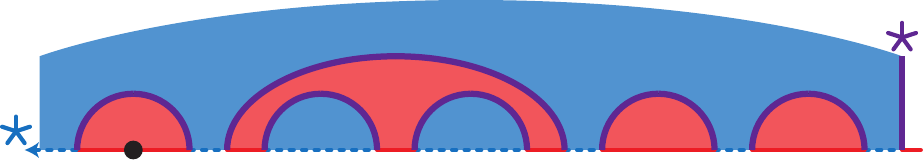}}$, shown in Figure \ref{Fi:GeomEssStep6}, that leads to the same sort of contradiction.
\end{proof}

\begin{figure}[h]
\begin{center}
\includegraphics[width=
3.6in]{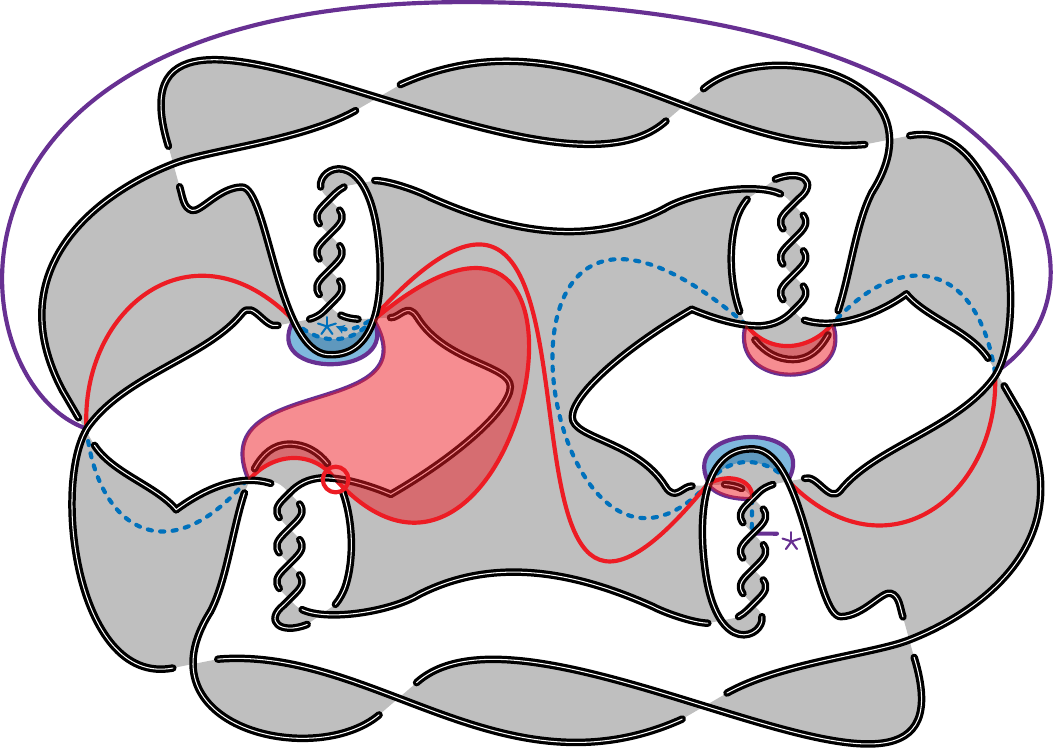}
\caption{The final contradiction in the proof of Proposition \ref{P:GeomEss}.}
\label{Fi:GeomEssStep6}
\end{center}
\end{figure}

If one replaces each $2$-twist annulus in Example \ref{Fi:BadPlumb}, left, with a ($1$-twist)  Hopf band, then the resulting surface is geometrically $\partial$-compressible.  To see this, draw the plumbed-on Hopf band in top-right of the figure in the way shown center in Figure \ref{Fi:BadPlumb}, rather than with the dark lowerered hemisphere. Then there is a $\partial$-compressing disk whose boundary runs over this plumbed-on Hopf band, through the middle of the surface, along the top of the light hemisphere bottom-right, and then across the crossing band at the far-right, where it passes over the overpass. 

What if one instead replaces each two-twist band in Figure \ref{Fi:BadPlumb}, left, with a $\frac{3}{2}$-twist band?

\begin{prop}
Let $F$ be the surface shown left in Figure\ref{Fi:BadPlumb}, but with each plumbed-on two-twist band replaced with a $\frac{3}{2}$-twist band.  Then $F$ is geometrically essential.
\end{prop}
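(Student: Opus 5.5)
The plan is to mimic the proof of Proposition \ref{P:GeomEss} step for step. A $\frac{3}{2}$-twist band is a plumbed-on unknotted M\"obius band with three half-twists. First, following the method in the proof of Proposition \ref{P:StateToCB}, I would isotope $F$ so that it appears as a checkerboard surface of some diagram, as was done for $F_1$ in Figure \ref{Fi:GeomEssStep1}. I would then take its flat cap system $W$ (Definition \ref{D:FlatCapSystem}) and write $\text{int}(F)\cap W=v$, the vertical arcs at the crossings.

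Next, I would suppose that $F$ has a geometric compressing disk $D$ chosen to minimize $|D\pitchfork W|$. The silent motor from \textsection\ref{S:Caps} then applies:
\begin{enumerate}
\item every arc of $D\cap W$ has its endpoints on distinct arcs of $v$;
\item every arc of $\partial D\cut v$ has its endpoints on distinct arcs of $v$.
\end{enumerate}
With these constraints, I would catalog the possible subdisks of $D\cut W$ by height, as in Definition \ref{D:height}. I would start with height $0$, enumerating within each ball of $S^3\cut(F\cup W)$. Then I would treat height $1$, split according to whether the subdisk abuts one outermost subdisk or two, mirroring Figures \ref{Fi:GeomEssStep2} and \ref{Fi:GeomEssStep3}.

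Away from the plumbed-on bands, the local picture is unchanged from the $2$-twist case, so I expect the catalog to coincide with that of Proposition \ref{P:GeomEss} there. Near each $\frac{3}{2}$-twist band, there are three crossings in a twist region instead of four. The key check is that the extra crossing parity does not create a new outermost subdisk cutting across the twist region. Any such arc of $D\cap W$ would have to join two vertical arcs in the same twist region through a bigon face of $W$. Such an arc is parallel through $F$ to $W$, so it is excluded by minimality.

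I would then argue that no compressing disk has all subdisks of height at most $1$, and that a height-$2$ subdisk can only be built out to the analogue of Figure \ref{Fi:GeomEssStep4}, left. There, two starred arcs lie in different disks of $W\cut F$ and neither can cut off a subdisk of height $0$ or $1$, giving a contradiction.

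For $\partial$-incompressibility, I would repeat the argument with a $\partial$-compressing disk $D$. The new outermost subdisks are those meeting $L$, analogous to Figure \ref{Fi:GeomEssStep4}, right. The main obstacle is here. The remark preceding the proposition shows that with $1$-twist Hopf bands a $\partial$-compressing disk runs over the plumbed-on band and passes over an overpass. I must show that with an odd number ($3$) of half-twists the analogous arc cannot close up, because it would have to cross the band's core an odd number of times. Concretely, I would check each candidate outermost $\partial$-subdisk adjacent to a $\frac{3}{2}$-twist band and show either that it contradicts minimality or that it abuts no subdisk of height $1$. The height-$2$ analysis would then produce the same dead end as before, together with the extra case analogous to Figure \ref{Fi:GeomEssStep6}. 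I expect that this case analysis, rather than the incompressibility part, is where new configurations could appear. If it fails, I would fall back on the characterization in the Remark after Definition \ref{D:GeomEss}: show that $F$ is not isotopic to $F'\natural\MobPos$ or $F'\natural\MobNeg$, for instance by comparing boundary slopes or determinants.
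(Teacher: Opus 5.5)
Your skeleton matches the paper's proof, which consists only of the remark that the argument for Proposition \ref{P:GeomEss} goes through with the obvious modifications to the figures. However, the two shortcuts you propose for the new feature (twist regions of three crossings instead of four) are invalid, and the fallback would not work. So as written, the steps that dispose of the twist regions and of $\partial$-compressions are not justified.

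First, minimality of $|D\pitchfork W|$ only guarantees that each arc of $D\cap W$ and each arc of $\partial D\cut v$ has its endpoints on \emph{distinct} vertical arcs. An arc running from $c_\ell$ to $c_{\ell+1}$ is therefore not excluded, so your ``parallel through $F$ to $W$'' step proves nothing. Moreover, with the band drawn as a twisted band (as in the proof of Theorem \ref{T:compressible_attach_annulus}), the bigons of the twist region are faces of $F$, not of $W$. The caps there are the two flanking disks, each of which meets every vertical arc of the twist region.

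The correct reason no outermost subdisk lies inside the twist region is that the twist region is alternating. On each side of $S^2$, a bigon of $F$ between $c_\ell$ and $c_{\ell+1}$ meets one flanking cap along $c_\ell$ and the other flanking cap along $c_{\ell+1}$, roles swapping between the two sides. Hence no bigon shares two vertical arcs with the same cap on the same side, which is what an outermost subdisk would require. What actually changes in passing from four crossings to three is the length of the chains of subdisks that run through the twist region along the flanking caps. Those height-$1$ and height-$2$ configurations are what must be rechecked.

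Second, your parity mechanism for $\partial$-incompressibility is false. The Hopf band and the $2$-twist annulus both have an even number of half-twists. Yet the former admits the $\partial$-compressing disk described just before this proposition, while the latter does not (Proposition \ref{P:GeomEss}). What kills that disk is the length of the twist region, not the parity of the number of half-twists. So you must do the honest case analysis of outermost $\partial$-subdisks and their extensions, as in Figures \ref{Fi:GeomEssStep4} and \ref{Fi:GeomEssStep6}.

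Finally, the fallback gives no obstruction. Here $F'$ ranges over all spanning surfaces of the \emph{same} link $L$, so the determinant of $L$ carries no information, and nothing a priori restricts the boundary slopes such $F'$ can have.
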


\begin{proof}
The proof is the same as that of Proposition \ref{P:GeomEss}, with the obvious modifications in the attending figures.
\end{proof}

The method behind our proof of Proposition \ref{P:GeomEss}  is completely systematized and thus should be programmable:

\begin{problem}\label{P:Compute}
Write a computer program that, given as input a positive integer $n$ and a link diagram $P$ (described by, say, a Gauss code or a PD code), characterizes all subdisks of height at most $n$ for its checkerboard surfaces, and thus determines whether or not either surface has a compressing disk or $\partial$-compressing disk in which all subdisks have height at most $n$.  Then use the resulting data to catalog essential checkerboard surfaces by crossing number.
\end{problem}

\section{Plumbing essential bands onto compressible surfaces}\label{S:Final}

The two surfaces plumbed together in Example \ref{Ex:StrongConverse} are copies of one another.  This surface admits a compressing disk on one side only, and in fact the compressing disk is unique up to isotopy.  This is the key, in some sense, to why that example works.

\begin{example}\label{Ex:FalseConverse}
In Example \ref{Ex:GabaiConverse}, we plumbed two essential pairs of pants onto a trivial annulus.  Suppose we replace one of the two with a single 2-twist annulus, as shown in Figure \ref{Fi:OneBandIsNotEnough}.   Surprisingly, the resulting surface is compressible---the figure shows a compressing disk (if we had replaced {\it each} pair of pants in Example with a 2-twist annulus, compressibility would follow from the classification of essential spanning surfaces for 2-bridge knots and links due to Hatcher-Thurston \cite[Theorem 1(c) and Remark 1]{ht}). 
\end{example}

\begin{figure}[h]
 \begin{center}
\includegraphics[height=2in]{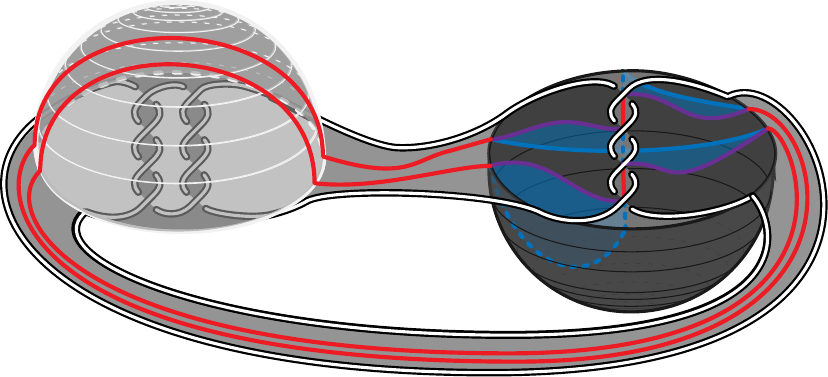}
\caption{A surface with a harder-to-find compressing disk}
\label{Fi:OneBandIsNotEnough}
\end{center}
\end{figure}

This motivates the following question, which one can ask of geometric or algebraic compressibility.

\begin{question}\label{Q:Band}
Suppose $F_1$ is compressible, $F_2$ is an essential annulus or M\"obius band whose core circle is unknotted, and $F=F_1*F_2$ is obtained by plumbing $F_2$ onto $F_1$, say along an arc $\alpha$.  Under what conditions is the resulting surface compressible?  Boundary compressible?
\end{question}

In \textsection\ref{S:Sufficient}, we provide sufficient conditions for the surface $F$ in Question \ref{Q:Band} to be compressible (see Theorem \ref{T:compressible_attach_annulus}) or to be $\partial$-compressible (see Theorem \ref{T:compressible_attach_mob}).  In \textsection\ref{S:2bridge}, we apply those results to 2-bridge knots and links.

\subsection{Sufficient conditions for compressibility and $\partial$-compressibility}\label{S:Sufficient}

\begin{theorem}\label{T:compressible_attach_annulus}
Suppose $D$ is a compressing disk for a spanning surface $F_1$, $\alpha$ is a properly embedded arc in $F_1$ with $|\alpha\pitchfork \partial D|=1$, and $F_2$ is an unknotted annulus with any number of twists. Then any surface $F$ obtained by plumbing $F_2$ onto $F_1$ along $\alpha$ is compressible---this is true for algebraic and geometric compressibility.

Moreover, if $F=F_1*F_2$ as above, $\gamma$ is a core curve of $F_2$, and $\alpha'$ is a properly embedded arc in $F_2$ that is disjoint from $F_1$ with $|\alpha'\pitchfork\gamma|=1$, then $F$ has a compressing disk $X$ with $|\alpha'\pitchfork\partial X|=1$.
\end{theorem}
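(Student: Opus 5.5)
The plan is to build $X$ by hand from $D$, using the plumbing description in Remark \ref{R:Plumbing}. Write $U\subset F_1$ for the square neighborhood of $\alpha$ along which $F_2$ is plumbed. Write $B_2$ for the ball with $B_2\cap F=F_2$ and $B_2\cap F_1=U$, and let $B_1$ be the complementary ball, so that $F_1=F\cap B_1$. Set $p=\alpha\cap\partial D$ and $\delta=\partial D\cap U$. After shrinking $U$, $\delta$ is a single arc crossing $U$, joining the two edges of $U$ along which the rest of $F_1$ attaches. The core $\gamma$ of $F_2$ meets $U$ in an arc joining the other two edges, and we may take $|\gamma\pitchfork\delta|=1$.

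First I will normalize $D$ relative to $B_2$. The key fact is that $B_2$ can be taken to be an arbitrarily thin collar of $U$, together with a neighborhood of the band $F_2\setminus U$, on one side of $F_1$. Using this, I isotope $D$ (rel $F_1$) so that $D\cap B_2$ is either empty or a single small half-disk $D_p$ near $p$. Which case occurs depends on which side of $F_1$ the disk $D$ leaves $\partial D$ from near $p$.
\begin{itemize}
\item In the empty case, $D$ itself is a compressing disk for $F$. Here $\partial D$ stays essential in $F$ because plumbing induces an injection $\pi_1(F_1)\to\pi_1(F)$, since $F$ deformation retracts onto $F_1\cup\gamma$ with $\gamma$ attached along an arc. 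This already gives compressibility of $F$, but not the "moreover" clause.
\item To handle both cases uniformly, and to obtain $|\alpha'\pitchfork\partial X|=1$, I will instead construct $X$ by a handle slide of $\partial D$ over $\gamma$.
\end{itemize}

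The construction of $X$ goes as follows. Replace $\delta$ by the arc $\delta''$ that follows $\delta$ to $\delta\cap\gamma$, runs once around $\gamma$ through the band $F_2\setminus U$, and then finishes along $\delta$. The curve $\partial X=(\partial D\setminus\delta)\cup\delta''$ meets any arc $\alpha'\subset F_2\setminus F_1$ dual to $\gamma$ exactly once, and it is essential in $F$ because its class in $H_1$ involves $[\gamma]$ with coefficient one. To build the disk, take $D$ minus a collar of $\delta$, band it to a disk $E\subset B_2$ bounded by a push-off of $\gamma$, and push the result off $F$.

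The main obstacle is the existence of $E$ with interior disjoint from $F$. A push-off of the core of an annulus with $n$ full twists links $\gamma$ $n$ times, so the obvious spanning disk of the push-off meets $F_2$. I will first try to exploit the freedom in the push-off: instead of pushing $\gamma$ off to one side of $F_2$ everywhere, I push it off to alternating sides of $F_2$ as it passes through $U$. The push-off then crosses $F_1$ near $U$, and the sheet of $D$ near $\delta$ can be used to absorb that crossing. Concretely, I will check that $B_2\cut F_2$ is a ball in which the relevant curve (push-off arc in the band, closed up through the collar of $U$ on the far side) is unknotted and bounds a disk, so the twisting is irrelevant because it is undone inside $B_2\cut F_2$. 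If this fails for some parity of $n$, the fallback is to use the side of $F_1$ opposite to where $D$ approaches $p$ and run the same argument there.

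For the algebraic statement, geometric compressibility already implies algebraic compressibility, so no separate argument is needed. Finally, the intersection count $|\alpha'\pitchfork\partial X|=1$ is read off directly from the construction of $\delta''$.
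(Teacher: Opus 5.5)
There is a genuine gap, and it lies in your choice of $\partial X$, not only in the construction of $E$. You slide $\partial D$ over $\gamma$ exactly once. Already in the simplest example, such a curve bounds no compressing disk unless $F_2$ has exactly one full twist.

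Take $F_1$ to be an unknotted untwisted annulus with core $a=\partial D$, put $b=\gamma$, and let $n$ be the signed number of full twists in $F_2$. Then $F$ is an orientable once-punctured torus. For a suitable orientation, its Seifert form $\theta(x,y)=\mathrm{lk}(x,y^+)$ has matrix $\begin{pmatrix}0&\epsilon\\0&n\end{pmatrix}$ in the basis $a,b$, with $\epsilon=\pm1$.

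Suppose a curve $c=pa+qb$ bounds a disk whose interior misses $F$. Its push-off to that side bounds a parallel disk disjoint from $c$, so the surface framing of $c$ vanishes: $0=\theta(c,c)=q(\epsilon p+qn)$. A compressing disk $X$ with $|\alpha'\pitchfork\partial X|=1$ has $|q|=1$, so it must have $|p|=|n|$; that is, $\partial X$ must run $n$ times along $\partial D$. Your curve has $|p|=|q|=1$, so it fails whenever $|n|\neq 1$, whatever disk you try to build.

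For the same reason, your proposed repair of $E$ cannot work. $B_2\cut F_2$ is a solid torus, not a ball; in the paper, $W_0$ and $W_1$ cut it into the two balls $B_\pm$. The twisting is not undone there, and each trip along $\partial D$ absorbs only one full twist, however the push-off alternates sides.

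The missing idea, which is the heart of the paper's proof, is to take $n$ disjoint parallel copies $D^1,\dots,D^n$ of $D$. Each meets $B_2$ in a disk bounded by arcs $u^j\subset U$ and $v^j\subset V$, and the paper discards these pieces. Using the checkerboard structure of $F_2$ with vertical arcs $c_1,\dots,c_{2n}$, it assembles one disk $X^*\subset B_2$ from $n$ disks in $B_+$ and $n+1$ disks in $B_-$, glued along arcs $w^j_k\subset W_k$. The boundary of $X^*$ on $F$ consists of arcs $\widehat{u}^j\subset U$ and a single arc $\beta$ that runs once around the band, crossing each $c_\ell$ once. Then $X=X^*\cup\bigcup_j(D^j\setminus\mathring{B}_2)$ is a compressing disk whose boundary runs $n$ times along $\partial D$ and once around $\gamma$, exactly as the framing count requires.

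Two smaller points. First, this construction needs $D$ to leave $U$ into $B_2$; when $D\cap B_2=\varnothing$ the paper just keeps $D$. In the example above, take $D$ on the side away from $B_2$. A curve $c$ bounding a disk on that side satisfies $\theta(x,c)=0$ for all $x$, and $\theta(a,c)=\epsilon q$, so $q=0$. Hence your uniform slide cannot work on that side. Your fallback to the other side of $F_1$ presupposes a compressing disk there, which need not exist.

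Second, the statement covers the case where $D$ is only an algebraic compressing disk, for instance when $F_1$ is geometrically incompressible but $\pi_1$-non-injective. In that case ``geometric implies algebraic'' gives nothing, and the construction must be carried out with immersed $\partial D$, as the paper does.
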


\begin{proof}
We will prove this for algebraic compressibility, but precisely the same argument will work for geometric compressibility (still, see footnote \ref{N:geometric}). 
Write $\partial F_i=L_i$, and let $n$ denote the number of full twists in $F_2$ (without sign), so that $L_2$ is a $(2,\pm2n)$-torus link.  

\begin{figure}[h]
\centering
\;\hspace{.5in}
\labellist\small
\pinlabel{ {\color{white} $D\cap B_1$}} at 30 135
\pinlabel{ {\color{white} $D\cap B_+$}} at 105 105
\pinlabel{ {\color{white} $D\cap B_-$}} at 105 50
\pinlabel{ {\color{white} $u$}} at 105 10
\pinlabel{ {\color{white} $v$}} at 105 135
\endlabellist
\includegraphics[height=1.75in]{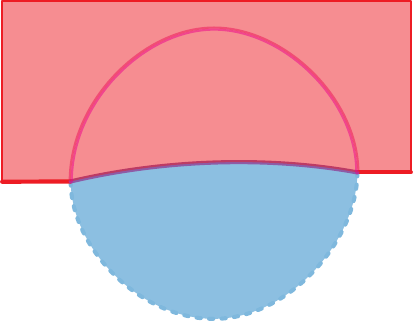}\hfill
\labellist\small
\pinlabel {$U$} at 50 10
\pinlabel {$V$} at 50 140
\endlabellist
\includegraphics[height=1.75in]{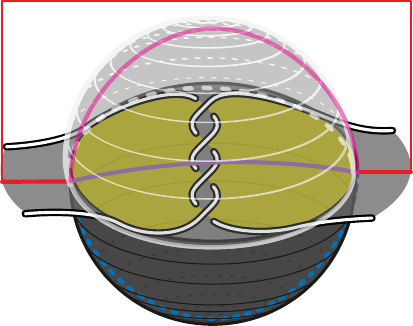}
\hspace{.5in}
\;
\caption{Left: the part of a compressing disk $D$ for $F_1*F_2$ that lies near the annulus $F_2$.  Right: A neighborhood of $F_2$ in $S^3$, with $U$ shown darkest gray, $V$ shown lightest gray, $W_1$ and $W_0$ shown yellow, and $D\cap W$ shown purple. }
\label{Fi:Disks_in_B2}
\end{figure}

It will be convenient to visualize this in the manner suggested in Figure \ref{Fi:Disks_in_B2}, so that $F_2$ consists of the plumbing disk $U$, which lies below $S^2$, together with a band that lies entirely ``flat'' in a disk $W\subset S^2$, except in $n$ half-twisted crossing bands (all twisting the same way); a plumbing cap $V$ associated to $U$ lies above $S^2$. 
Then $F_2$ has the structure of a checkerboard surface relative to the 2-sphere $U\cup W$.  The opposite checkerboard surface is comprised of the two disks into which $F_2$ cuts $W$, together with $2n$ crossing bands that each twist the opposite direction as those in $F_2$.  
These two checkerboard surfaces intersect in vertical arcs $c=c_1\sqcup\cdots\sqcup c_{2n}$, which together cut the second checkerboard surface into two disks, which we denote $W_1$ and $W_0$. Each $W_k$ is a cap for $F_2$ and for $F$. Together with $F_2$, they cut $B_2$ into two 3-balls---write $B_-$ for the one incident to $U$ and $B_+$ for the one incident to $V$.

If $D$ is disjoint from $B_2$ or $n=0$, then $F$ is obviously compressible, so assume otherwise. 
We may assume that we have chosen $D$ so that $D\cap W$ is comprised entirely of arcs, each of whose endpoints lie on opposite arcs of $\partial W\cap\mathring{F}_1$.

Note that $D\cap \mathring{B}_2$ is a disk $D_0$ whose boundary is comprised of an arc $u\subset U$ and an arc $v\subset V$. 
Take $n$ disjoint parallel copies of $D$,\footnote{\label{N:geometric} It may be convenient to think of these copies as all lying in a thin regular neighborhood $\nu D$ (letting $\wt{D}$ denote the (unique) properly embedded disk in $S^3\cut F$ satisfying $ h_F(\wt{D})=D$, $\nu D$ is the image under $h_F$ of a regular neighborhood in $S^3\cut F$ of $\wt{D}$).  In particular, if $D$ is a geometric compressing disk, then one can assume that $\partial \nu D\cap F$ is an embedded annulus, which will guarantee in that case that the compressing disk $X$ that we construct is a geometric one.} denote them $D^1,\dots,D^n$, and write $D^*=\bigsqcup_{j=1}^nD^j$. Then each $D^j\cap B_2$ is a disk $D^j_0$, where each $\partial D^j_0$ is comprised of an arc $u^j\subset U$ and an arc $v^j\subset V$. Index so that $\alpha$ intersects the arcs $u^1,\dots,u^n$ in that order. 
Write $D^*_0=\bigsqcup_{j=1}^nD^j_0=D^*\cap B_2$.

\begin{figure}[h]
\centering
\;\hfill
\labellist\small
\pinlabel {$W_1$} at 35 40 
\pinlabel {$W_0$} at 95 40 
\pinlabel {$\partial_1u^1$} at -2 13 
\pinlabel {$\partial_1u^2$} at 15 32 
\pinlabel {$\partial_0u^1$} at 126 26 
\pinlabel {$\partial_0u^2$} at 128 36 
\pinlabel {$c_1$} at 76 13 
\pinlabel {$c_2$} at 76 30 
\pinlabel {$c_3$} at 76 47 
\pinlabel {$c_4$} at 76 64 
\endlabellist
\includegraphics[width=
3.025in]{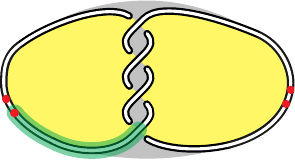}\hfill
\labellist\small
\pinlabel {$w_1^1$} at 35 30 
\pinlabel {$w_1^2$} at 35 45 
\pinlabel {$w_0^1$} at 102 20 
\pinlabel {$w_0^2$} at 102 47 
\pinlabel { {\color{myBlue} part of $\beta_0$}} at 70 -5
\pinlabel { {\color{red} $\beta_1$}} at 79 21 
\pinlabel { {\color{myBlue} $\beta_2$ }} at 81 39 
\pinlabel { {\color{red} $\beta_3$}} at 79 53 
\pinlabel { {\color{myBlue} part of $\beta_4$}} at 70 80 
\endlabellist
\includegraphics[width=
3.025in]{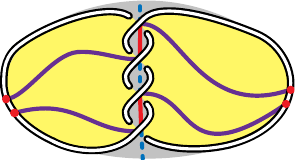}\hfill\;
\caption{Labeling arcs in the proof of Theorem \ref{T:compressible_attach_annulus}}
\label{Fi:Disks_in_B2_2}
\end{figure}

Now, in the following way, replace $D^*_0$ with a single disk $X^*\subset B_2$.  See Figures \ref{Fi:Disks_in_B2_2} and \ref{Fi:Disks_in_B2_3}:
\begin{enumerate}[leftmargin=*, label=(\arabic*)]
\item For $k=1,0$ and $j=1,\dots,n$, write $\partial_k u^j$ for the endpoint of $u^j$ that lies on $\partial W_k$.  Assume without loss of generality that $W_1,W_0$, $u^1,\dots,u^n$, and $c_1,\dots,c_{2n}$ are indexed as shown left in Figure \ref{Fi:Disks_in_B2_2}, so that, if one cuts $\partial F_2$ at the endpoints of all the arcs $u^j$ and $c_\ell$, one of the resulting arcs (highlighted green in the figure) has one endpoint at the overpass endpoint of $c_1$ and the other at $\partial_1u^1$. (If this were not the case, we could make it so by reversing the indices on $W_1$ and $W_0$.)
\item For $k=1,0$, construct systems of disjoint, properly embedded arcs $w_k=w_k^1\sqcup\cdots\sqcup w_k^n\subset W_k$ such that 
each $w_k^j$ shares one endpoint with $u^j$ and
has one endpoint in the interior of the vertical arc $c_{2j-k}$.
\item  Construct a system of disjoint, properly embedded arcs $\wh{u}=\wh{u}^2\sqcup\cdots\sqcup\wh{u}^n\subset U$ such that, for $j=2,\dots,n$, we have $\partial\wh{u}^j=\partial_1u^j\cup\partial_0u^{j-1}$. See Figure \ref{Fi:Disks_in_B2_3}, right.
\item\label{step:4} Construct an arc $\beta\subset F_2\setminus \mathring{U}$ with $\partial\beta=\partial_1u^1\cup\partial_0u^n$ that intersects each vertical arc $c_\ell$ in the single point which is also an endpoint of an arc $w_k^j$ (namely $k=\ell\mod 2$ and $j=\lfloor\frac{\ell+1}{2}\rfloor$). Denote the components of $\beta\cut c$ by $\beta_0,\beta_1,\dots,\beta_{2n}$, such that each vertical arc $c_\ell$ is incident to $\beta_\ell$ and $\beta_{\ell-1}$. 
\item For $j=1,\dots,n$, the arcs $w_1^j$, $\beta_{2j-1}$, $w_0^{j}$, and $v^j$ co-bound a disk $X_{2j-1}\subset B_+$.
\item For $j=2,\dots,n-1$, the arcs $w_1^j$, $\beta_{2j-2}$, $w_0^{j-1}$, and $\wh{u}^j$ co-bound a disk $X_{2j-2}\subset B_-$. Likewise, the arcs $w_1^1$ and $\beta_0$ co-bound a disk $X_0\subset B_-$, and the arcs $w_0^{n}$ and $\beta_{2n}$ co-bound a disk $X_{2n}\subset B_-$.  
\item The union $X^*=\bigcup_{\ell=0}^{2n}X_\ell$ is connected because, for each $\ell=1,\dots,2n$, we have $X_\ell\cap X_{\ell-1}=w_k^j$ for $k=\ell\mod 2$ and $j=\lfloor{\frac{\ell+1}{2}}\rfloor$. Also, 
$\chi(X^*)=(4n)-((2n+1)+4n-1)+(2n+1)=1$, so $X^*$ is a disk.
\end{enumerate}

\begin{figure}[h]
\centering
\;\hfill
\labellist\small
\pinlabel{ {\color{white} $v_1$}} at 70 90
\pinlabel{ {\color{red} $v_2$}} at 80 106
\pinlabel {{ \color{red} $\partial_1u^2$}} at -11 28 
\pinlabel {{ \color{red} $\partial_1u^1$}} at -8 16 
\pinlabel {{\color{red} $\partial_0u^1$}} at 150 23 
\pinlabel {{\color{red} $\partial_0u^2$}} at 152 36 
\endlabellist
\includegraphics[width=.4\textwidth]{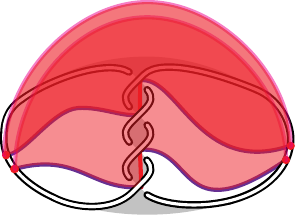}\hfill
\labellist\small
\pinlabel{ {\color{white} $\beta_0$}} at 57 33
\pinlabel{ {\color{white} $\wh{u}^2$}} at 75 8
\pinlabel{ {\color{white} $w_1^1$}} at 40 67
\pinlabel{ {\color{white} $w_2^1$}} at 50 80
\pinlabel{ {\color{myBlue} $\beta_2$}} at 80 80
\pinlabel{ {\color{myPurple} $w_1^0$}} at 90 75
\pinlabel{ {\color{myPurple} $w_2^0$}} at 100 60
\pinlabel{ {\color{myBlue} $\beta_4$}} at 110 91
\pinlabel {{ \color{red} $\partial_1u^2$}} at -11 68 
\pinlabel {{ \color{red} $\partial_1u^1$}} at -8 56 
\pinlabel {{\color{red} $\partial_0u^1$}} at 150 63 
\pinlabel {{\color{red} $\partial_0u^2$}} at 152 76 
\endlabellist
\includegraphics[width=.4\textwidth]{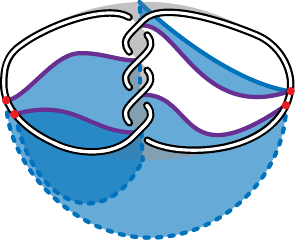}\hfill \;
\caption{The disk $X^*\subset B^2$ intersects $B_+$ in $n=2$ disks (left), and $X^*$ intersects $B_-$ in $n+1=3$ disks (right).}
\label{Fi:Disks_in_B2_3}
\end{figure}

Each of the disks $D_j\setminus\mathring{B}_2$ attaches along a single arc $v^j$ to this disk $X^*$.  Therefore, the union $X=X^*\cup\bigcup_{j=1}^n(D^j\setminus\mathring{B}_2)$ is also a disk, and $\partial X\subset\mathring{F}$. Finally, we note that $\partial X$ intersects each vertical arc $c_\ell$ transversally in a single point, so $\partial X$ is homotopically nontrivial in $F$, and $X$ is a compressing disk for $F$.  Moreover, by construction, $X$ has the extra property described in the second part of the theorem.
\end{proof}

\begin{rem}
Theorem \ref{T:compressible_attach_annulus} becomes false without the hypothesis that $|\alpha\pitchfork \partial D|=1$.  Indeed, the essential surface $F$ constructed in Example \ref{Ex:GabaiConverse} could instead be built from the compressible surface $F_0$ shown left in Figure \ref{Fi:GabaiConverse} by separately plumbing on two annuli, $A,A'$, each with two full positive twists.  The surface $F_0*A$ would be compressible by Theorem \ref{T:compressible_attach_annulus}, and yet plumbing the essential annulus $A'$ onto this surface would yield $F$, which is essential.
\end{rem}

It is also natural to ask how the conclusion of Theorem \ref{T:compressible_attach_annulus} changes when $F_2$ is a M\"obius band, rather than an annulus.  

\begin{theorem}\label{T:compressible_attach_mob}
Suppose $D$ is a compressing disk for a spanning surface $F_1$, $\alpha$ is a properly embedded arc in $F_1$ with $|\alpha\pitchfork \partial D|=1$, and $F_2$ is an unknotted M\"obius band with any number of twists. Then any surface $F$ obtained by plumbing $F_2$ onto $F_1$ along $\alpha$ is $\partial$-compressible---this is true for algebraic and geometric compressibility.
\end{theorem}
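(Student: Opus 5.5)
I plan to adapt the construction from the proof of Theorem \ref{T:compressible_attach_annulus}. Now $F_2$ is an unknotted M\"obius band with an odd number $m=2n+1$ of half-twists. As in Figure \ref{Fi:Disks_in_B2}, I will draw $F_2$ as the plumbing disk $U$ below $S^2$, together with a band lying flat in a disk $W\subset S^2$ except at $m$ half-twisted crossing bands. The plumbing cap $V$ lies above $S^2$. The opposite checkerboard surface relative to $U\cup W$ again has vertical arcs $c_1,\dots,c_{2m}$. The key difference is that, since $m$ is odd, $\partial F_2$ is a single $(2,m)$-torus knot. Consequently the vertical arcs cut the complementary checkerboard structure so that the two sides of the band are interchanged as one travels around the core. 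I will take parallel copies $D^1,\dots,D^k$ of $D$, as in footnote \ref{N:geometric}, so that the construction is geometric when $D$ is. Each $D^j\cap B_2$ is a disk $D^j_0$ bounded by arcs $u^j\subset U$ and $v^j\subset V$.

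The main step is to replace $\bigsqcup_j D^j_0$ by a disk $X^*\subset B_2$, built exactly as in steps (1)--(7) of the annulus proof. The pieces are: arcs $w^j_k$ in the caps running from $\partial_k u^j$ to interior points of vertical arcs; arcs $\wh u^j\subset U$ joining $\partial_1u^j$ to $\partial_0u^{j-1}$; and an arc $\beta\subset F_2\setminus\mathring U$ crossing each $c_\ell$ once. The disks $X_\ell$ lie alternately in $B_+$ and $B_-$. Because the number of vertical arcs is now $2m\equiv 2\pmod 4$, the alternation closes up with the wrong parity at one end. Only one end disk, say $X_0$ (bounded by $w^1_1$ and $\beta_0$), closes up inside $F$. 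At the other end, the final region in $B_-$ is bounded by a cap arc, an arc of $\beta$, and an arc of $L_2=\partial F_2$.

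So I expect the resulting disk $X$ to have boundary consisting of an arc of $F$ together with an arc of $L$, which is the data of a $\partial$-compressing disk rather than a compressing disk. Choosing the number of copies $k$ to be $n$ or $n+1$ should make the Euler characteristic count in step (7) again give $\chi(X^*)=1$. Attaching the outer pieces $D^j\setminus\mathring B_2$ along the arcs $v^j$ then gives a disk $X$ with $\partial X=\alpha_X\cup\beta_X$, where $\alpha_X\subset F$ and $\beta_X\subset L$.

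The main obstacle is checking that $\alpha_X$ is not $\partial$-parallel in $F$, so that the $\partial$-compression produces a genuine spanning surface. My first attempt is to mirror the annulus argument: $\alpha_X$ meets each vertical arc $c_\ell$ transversally exactly once, and it crosses the core of the M\"obius band once. An arc cobounding a disk in $F$ with an arc of $\partial F$ would have to meet that core curve an even number of times. For the algebraic version, I will instead show that the corresponding immersed arc is essential in $\pi_1$ relative to the boundary.

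I would also double-check the degenerate case of a single half-twist. There the conclusion should reduce to $F$ containing a $\MobPos$ or $\MobNeg$ boundary summand, which is meridianally $\partial$-compressible as in the remark following Definition \ref{D:GeomEss}. I would verify this case directly.
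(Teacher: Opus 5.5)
Your overall plan matches the paper's: rerun the annulus construction and let the odd number of half-twists force one contact of $\partial X$ with $L$. However, the bookkeeping that is supposed to produce that contact is wrong, and so is your certificate that $\alpha_X$ is not $\partial$-parallel.

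\textbf{The crossing count and the mechanism.} A M\"obius band with $2n+1$ half-twists has $2n+1$ crossings, hence $2n+1$ vertical arcs, not $2m=4n+2$. In the annulus case there are $2n$ vertical arcs only because $n$ full twists means $2n$ crossings. So your explanation (``$2m\equiv 2\pmod 4$, so the alternation closes up with the wrong parity'') cannot be the mechanism. The annulus case with $n$ odd also has $2n\equiv 2\pmod 4$ vertical arcs, and there the construction closes up as a compressing disk.

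The actual point is that $2n+1$ is odd:
\begin{itemize}
\item Take exactly $n$ copies of $D$, not ``$n$ or $n+1$''. With your count of $4n+2$ arcs, neither choice would match.
\item The $2n$ cap arcs $w_k^j$ then land on $c_1,\dots,c_{2n}$, and $\beta$ crosses those, leaving $c_{2n+1}$ with no cap arc.
\item $\beta$ must therefore get past $c_{2n+1}$ without crossing its vertical arc. It does so by running across $L$ at that crossing (the paper: across the overpass at $c_{2n+1}$).
\end{itemize}
That single contact point is where $\partial X$ meets $L$. The disks $X_0,\dots,X_{2n}$ and the Euler characteristic count are otherwise unchanged. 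Your ``final region bounded by a cap arc, an arc of $\beta$, and an arc of $L_2$'' is only right once you place that arc of $L$ at the unmatched crossing.

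\textbf{Non-$\partial$-parallelism.} The paper does not spell this out, but your proposed certificate fails. Push $\partial X$ slightly off $L$. It differs from $\bigcup_j\partial D^j$ together with $\beta$ closed up by an arc in $U$ only by a mod-$2$ cycle in the disk $U$. The closed-up $\beta$ meets $c_1$ once, so it represents the core $\gamma$ of $F_2$. Hence $[\partial X]=n[\partial D]+[\gamma]$ in $H_1(F;\mathbb{Z}/2)$.

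Now $\gamma\cdot\gamma=1$, and $\partial D\cdot\gamma=1$ because $\gamma$ runs through $U$ along $\alpha$. So $\alpha_X\cdot\gamma\equiv n+1$, which is even whenever $n$ is odd, e.g.\ for three half-twists. Meeting each vertical arc once certifies nothing either, since those are arcs rather than closed curves.

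The fix is to test against a parallel copy of $\partial D$ instead. Since $\partial D$ bounds a disk on one side of $F$, it is two-sided, so $\partial D\cdot\partial D=0$. Therefore $\alpha_X\cdot\partial D\equiv\gamma\cdot\partial D=1$. This gives $[\alpha_X]\neq 0$ in $H_1(F,\partial F;\mathbb{Z}/2)$, so $\alpha_X$ is not $\partial$-parallel. The same homological count works for the immersed (algebraic) version.
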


\begin{proof}
Let $2n+1$ denote the number of half-twists in $F_2$ (without sign). If $n=0$, then $F$ is obviously $\partial$-compressible, so assume $n\geq 1$. The proof is now the same as that of Theorem \ref{T:compressible_attach_annulus}, with the following minor change. 

There are $2n+1$ crossings $c_1,\dots,c_{2n+1}$, rather than $2n$ crossings.
On step \ref{step:4}, $\beta$ should intersect the interiors of $c_1,\dots,c_{2n}$ as before, but $\beta$ should run across the overpass at $c_{2n+1}$, rather than intersecting its interior. This is why the resulting disk $X$ is a $\partial$-compressing disk, rather than a compressing disk. 
\end{proof}

Proposition \ref{P:GeomEss} may feel more surprising in light of Theorems \ref{T:compressible_attach_annulus} and \ref{T:compressible_attach_mob} than it did initially.  How can it be that we were able to take the highly inessential surface $F_0$ shown right in Figure \ref{Fi:Ex1}, plumb on six unknotted annuli (including two via boundary sum) as shown in Figure \ref{Fi:Ex2}, and get a surface that is geometrically essential?  Part of the answer is that $F_0$, while algebraically compressible and geometrically $\partial$-compressible, is geometrically incompressible. The other part of the answer is the following.

\begin{rem}
 It is vital in the hypothesis of Theorems \ref{T:compressible_attach_annulus} and \ref{T:compressible_attach_mob} that $D$ is a compressing disk, rather than a $\partial$-compressing disk.  
This is because, unless $D$ is disjoint from $B_2$, 
the construction in the proofs of those theorems requires taking $n$ copies of $D$ (and transforming them) which will mean that $\partial X$ intersects $L$ in $n$ copies of the point $\partial D\cap L_1$, plus an additional point at the overpass of $c_{2n+1}$ if $F_2$ is a M\"obius band.  Thus, the proof falls apart if $n\geq 2$, or if $n=1$ and $F_2$ is a M\"obius band.  Further, if $n=0$, then $F$ is obviously compressible or $\partial$-compressible simply on account of $F_2$, so there is no result worth stating.  We do, however, recover the following.
\end{rem}

\begin{prop}\label{P:compressible_attach_mob}
Suppose $D$ is a $\partial$-compressing disk for a spanning surface $F_1$, $\alpha$ is a properly embedded arc in $F_1$ with $|\alpha\pitchfork \partial D|=1$, and $F_2$ is a Hopf band. Then any surface $F$ obtained by plumbing $F_2$ onto $F_1$ along $\alpha$ is $\partial$-compressible.
\end{prop}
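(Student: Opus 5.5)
The plan is to rerun the construction from the proof of Theorem \ref{T:compressible_attach_mob} in the special case $n=0$ plus one extra crossing. The previous remark explains why this is the only case that survives when $D$ is merely a $\partial$-compressing disk. A Hopf band has a single half-twisted crossing band, so $F_2$ is an unknotted M\"obius band with $2n+1=1$ half-twists, i.e.\ $n=0$. The construction then uses no parallel copies of $D$ at all. That might seem to make the result ``obvious'' from $F_2$ alone, but a Hopf band is not $\partial$-compressible by itself: it is a fiber surface, hence $\pi_1$-essential. So the real content is that we must use exactly one copy of $D$, routed through $B_2$.

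Concretely, I would set up $F_2$ as in Figure \ref{Fi:Disks_in_B2}. The plumbing disk $U$ lies below $S^2$, the associated cap $V$ lies above, and the band of $F_2$ lies flat in $W\subset S^2$ except for one crossing band. The opposite checkerboard disks $W_1,W_0$ cut $B_2$ into $B_\pm$. I would isotope $D$ to meet $W$ in arcs joining opposite arcs of $\partial W\cap\mathring F_1$. If $D$ misses $B_2$, then $D$ is already a $\partial$-compressing disk for $F$, since the arc $\partial D\cap F_1$ stays essential after the plumbing because it meets $\alpha$ once.

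Otherwise $D\cap B_2$ is a single disk $D_0$, bounded by an arc $u\subset U$ and an arc $v\subset V$; this uses $|\alpha\pitchfork\partial D|=1$. I would replace $D_0$ by a disk $X^*\subset B_2$ in three steps.
\begin{enumerate}
\item Choose arcs $w_1\subset W_1$ and $w_0\subset W_0$ starting at the endpoints $\partial_1u,\partial_0u$ of $u$, ending on the vertical arc $c_1$ or on the overpass there.
\item Let $\beta\subset F_2\setminus\mathring U$ be an arc from $\partial_1u$ to $\partial_0u$ that crosses the crossing band of the Hopf band by running over its overpass, as in the M\"obius modification.
\item Form $X^*$ from a disk in $B_+$ bounded by $w_1,\beta_1,w_0,v$, together with disks $X_0,X_2\subset B_-$ bounded by $w_1\cup\beta_0$ and $w_0\cup\beta_2$ respectively.
\end{enumerate}
An Euler characteristic count as in step (7) of the proof of Theorem \ref{T:compressible_attach_annulus} shows that $X^*$ is a disk. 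Attaching $D\setminus\mathring B_2$ along $v$ then gives a disk $X$.

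It remains to check that $X$ is a $\partial$-compressing disk. $\partial X$ now meets $L$ in two points: the original point $\partial D\cap L_1$ and the new point at the Hopf band's overpass. I would then modify $X$ so that $\partial X$ meets $L$ exactly as the definition requires, either by cutting $X$ along one of these meeting points or by choosing $\beta$ to exit along $L_2$ rather than cross it.

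The main obstacle is this last bookkeeping. One must verify that the boundary splits as an arc in $F$ plus an arc in $L$, and that the arc in $F$ is not $\partial$-parallel. For the latter, I would argue that this arc meets the core of the Hopf band once, or meets the cocore arc $\alpha'$ once, so it cannot cobound a disk with an arc of $L$ in $F$.
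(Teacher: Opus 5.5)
\textbf{Verdict: there is a genuine gap.} Your first identification is wrong: a Hopf band is not a M\"obius band with one half-twist. It is an unknotted \emph{annulus} with one full twist. In the flat model of Figure~\ref{Fi:Disks_in_B2} its band therefore has two half-twisted crossings, there are two vertical arcs $c_1,c_2$, and $L_2$ is the $(2,\pm2)$-torus link. An unknotted band with a single half-twist is the trivial M\"obius band spanning the unknot, which is already $\partial$-compressible by itself; this is the ``obvious'' case $n=0$ of Theorem~\ref{T:compressible_attach_mob}. That identification also clashes with your own correct remark that the Hopf band is $\pi_1$-essential.

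The proposition is really the case $n=1$ of Theorem~\ref{T:compressible_attach_annulus}, and the paper's proof just reruns that construction:
\begin{itemize}
\item take exactly one copy of $D$;
\item let $w_1^1$ and $w_0^1$ end on the two \emph{different} vertical arcs $c_1$ and $c_2$;
\item let $\beta=\beta_0\cup\beta_1\cup\beta_2$ pass through the interiors of both $c_1$ and $c_2$, never over an overpass.
\end{itemize}
Then $\partial X=(\partial D\setminus u)\cup\beta$ meets $L$ only in the single point inherited from $\partial D$, so $X$ is a $\partial$-compressing disk. The role of the Hopf-band hypothesis is precisely that $n=1$, so only one copy of $D$ is used. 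Your step (3), with three pieces $X_0,X_1,X_2$ and three subarcs $\beta_0,\beta_1,\beta_2$, is in fact this two-crossing construction. It is inconsistent with your steps (1)--(2), which have only one crossing.

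Routing $\beta$ over an overpass, as in the M\"obius-band theorem, is exactly what creates your ``main obstacle'': $\partial X$ then meets $L$ in two points. The remark preceding the proposition explains that this extra overpass point is why the construction fails in that setting. Neither proposed repair works:
\begin{itemize}
\item There is no operation of ``cutting $X$ along a meeting point'' that yields a disk whose boundary is one arc in $F$ plus one arc of $L$, as Definition~\ref{D:GeomEss}(2) requires.
\item Running $\beta$ along $L_2$ still leaves $\partial X\cap L$ with two components.
\end{itemize}
A disk whose boundary meets $L$ twice is not a $\partial$-compressing disk, and it can be completely trivial (e.g.\ the plumbing cap of a boundary sum). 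With the correct two-crossing routing, the obstacle never arises.

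A smaller point: to see that the arc $\partial X\cap F$ is not $\partial$-parallel, use the core $\gamma$ of $F_2$ rather than $\alpha'$. The endpoints of $\beta$ lie on the two sides of $U$ contained in $\partial F_2$, and these lie on different components of $\partial F_2$. So $\beta$ meets $\gamma$ an odd number of times, while $\partial D\setminus u$ misses $\gamma$. But a closed curve meets a $\partial$-parallel arc an even number of times. Meeting $\alpha'$ once does not suffice, since a $\partial$-parallel arc cutting off an endpoint of $\alpha'$ also meets $\alpha'$ exactly once.
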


\begin{proof}
The proof is the same as that of Theorem \ref{T:compressible_attach_annulus}.
\end{proof}

\subsection{Applications to 2-bridge knots and links}\label{S:2bridge}
As a corollary of Theorem \ref{T:compressible_attach_annulus}, we obtain a new, constructive proof of \cite[Theorem 1(c)]{ht}, in the case of spanning surfaces. For context, we provide a brief summary of this important result of Hatcher and Thurston, as it pertains to spanning surfaces.  In Theorem 1(b) of \cite{ht}, they prove that every geometrically incompressible spanning surface for a 2-bridge knot or link $L_{p/q}$ can be constructed as follows.  First, take a subtractive continued fraction expansion of $p/q$,
\[\frac{p}{q}=a+[b_1,b_2,\dots,b_k]=a+\cfrac{1}{b_1-\cfrac{1}{b_2-\cdots-\cfrac{1}{b_k}}}.\]
Then take $k$ unknotted bands $F_1,\dots,F_k$, where each $F_i$ has $\frac{i}{2}$ half-twists about its core circle, and plumb $F_i$ onto $F_{i-1}$ for each $i=2,\dots,k$ in such a way that (no plumbing is a boundary sum and) the plumbing squares $F_i\cap F_{i-1}$ and $F_i\cap F_{i+1}$ are disjoint for each $i=2,\dots,k-1$. 
Theorem 1(c) characterizes precisely when the resulting surface is geometrically essential:

\begin{cor}[Theorem 1(c) of \cite{ht}]\label{C:compressible_attach_annulus}
A surface $F=F_1*\cdots*F_k$ constructed from a continued fraction expansion $\frac{p}{q}=a+[b_1,\dots,b_k]$ is geometrically essential if and only if each $|b_i|\geq 2$.
\end{cor}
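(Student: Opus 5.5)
The corollary has two directions, and I will treat them differently. The "if" direction (each $|b_i|\geq 2$ implies essential) should come from the $\pi_1$ side. Each $F_i$ is an unknotted band with $|b_i|\geq 2$ half-twists. If it is an annulus, it has at least one full twist, so it is $\pi_1$-injective; its boundary is a $(2,b_i)$ torus link with nonzero linking, so compressing it would produce two disks, which is impossible. If it is a M\"obius band with at least three half-twists, it is $\pi_1$-injective and not \MobPos\ or \MobNeg, so it is $\pi_1$-essential. Repeated application of Ozawa's Theorem \ref{T:Ozawa} to the plumbings $F_1*\cdots*F_k$ then shows $F$ is $\pi_1$-essential. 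The remark following Definition \ref{D:AlgEss} then makes $F$ geometrically essential. This direction also supplies (3)$\Rightarrow$(1)$\Rightarrow$(2) in the introduction's version.

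For the "only if" direction, suppose some $|b_i|\leq 1$. I will construct a compression or $\partial$-compression explicitly, which is where Theorems \ref{T:compressible_attach_annulus} and \ref{T:compressible_attach_mob} enter.

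If $b_i=0$, then $F_i$ is an untwisted annulus. Its core bounds a disk meeting $F_i$ only in its boundary, and I will push that disk off the other plumbing squares to get a compressing disk for $F$.

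If $|b_i|=1$, then $F_i$ is an unknotted M\"obius band with one half-twist. When $i=1$ or $i=k$, it is a \MobPos\ or \MobNeg\ summand attached along one square. A $\partial$-compressing disk for $F_i$ meeting the plumbing arc once can then be chosen away from the square, giving a $\partial$-compression of $F$.

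When $1<i<k$, I will build the compressing disk inductively. The key observation is that plumbing the two neighbours onto a half-twisted band $F_i$ produces a surface $F_{i-1}*F_i*F_{i+1}$. I will check by a local picture that in this surface the core of $F_{i-1}$, band-summed across $F_i$ with the core of $F_{i+1}$, bounds a disk $D$. The underlying fact is the relation $[\dots,b_{i-1},\pm1,b_{i+1},\dots]=[\dots,b_{i-1}\mp1,b_{i+1}\mp1,\dots]$: the surface is an isotope of one with fewer bands, in which a compression or $\partial$-compression appears whenever the new coefficients are degenerate.

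In the generic case I will instead proceed as follows. Find a compressing disk $D$ (algebraic or geometric) for the partial sum $G=F_1*\cdots*F_j$ such that the next plumbing arc $\alpha$ meets $\partial D$ exactly once. Then apply the "moreover" clause of Theorem \ref{T:compressible_attach_annulus} to get a compressing disk $X$ for $G*F_{j+1}$ meeting the next plumbing arc $\alpha'$ once. This lets the induction continue through annuli. If a M\"obius band appears, I will apply Theorem \ref{T:compressible_attach_mob} (or Proposition \ref{P:compressible_attach_mob} for the boundary case), which gives $\partial$-compressibility.

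The step I expect to be the main obstacle is starting the induction: producing the initial disk at a band with $|b_i|\leq1$ that meets the next plumbing arc exactly once. A second difficulty is making sure the plumbings on both sides, not just the later ones, can be handled. For the earlier side I plan to reverse the order of the continued fraction, since plumbing is symmetric and the chain can be rebuilt from $F_k$ downward. Finally, I must check the end case where the compression is only a $\partial$-compression into a M\"obius band; there the resulting surface must still be a spanning surface, so that the compression genuinely witnesses inessentiality.
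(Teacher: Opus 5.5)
Your ``if'' direction is the paper's argument: each $F_i$ is $\pi_1$-essential, then Theorem~\ref{T:Ozawa}. The ``only if'' direction, however, has two genuine gaps.

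The first is the case $b_i=0$. Pushing the core disk of $F_i$ off the neighbouring squares works only when $i\in\{1,k\}$ or when $F_{i-1}$ and $F_{i+1}$ are plumbed onto the same side of $F_i$. When they are plumbed onto opposite sides, the core of $F_i$ runs through both plumbing squares. Since the annulus $F_i$ is two-sided, a disk bounded by its core approaches $F_i$ from a single side, and on that side one of the neighbouring bands is in the way. This is exactly the phenomenon of Example~\ref{Ex:FalseConverse}, and it is the only case in which Theorem~\ref{T:compressible_attach_annulus} is needed.

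The step you call the main obstacle is easy once this is recognized. Take the core disk $D$ of $F_i$ on the side where $F_{i+1}$ is plumbed. Because $F_{i-1}$ lies on the other side, $D$ is already a compressing disk for $F_1*\cdots*F_i$, so the earlier bands never need handling and no reversal of the chain is required. The plumbing arc for $F_{i+1}$ crosses the annulus $F_i$, so it meets $\partial D$ exactly once. Then apply the theorem and its second part band by band. Stop as soon as a band is plumbed on away from the current disk; that disk then survives the remaining plumbings.

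A related caution: your fallback to Theorem~\ref{T:compressible_attach_mob} when a later band is a M\"obius band yields only a $\partial$-compressing disk. Theorem~\ref{T:compressible_attach_annulus} cannot take that as input, so your induction does not continue past such a band in general. The paper's proof is silent on this point as well.

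The second gap is your treatment of $|b_i|=1$ with $1<i<k$, which does not work. The identity $[\dots,b_{i-1},\pm1,b_{i+1},\dots]=[\dots,b_{i-1}\mp1,b_{i+1}\mp1,\dots]$ only says that the two surfaces span the same link. They are not isotopic: the longer one has one more band, hence Euler characteristic one lower. When the new coefficients still have absolute value at least $2$, e.g.\ $[3,1,3]$ versus $[2,2]$, the shorter surface is essential, so the reduction gives nothing. Your asserted disk bounded by a band sum of the cores of $F_{i-1}$ and $F_{i+1}$ is then left without justification.

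The missing observation is that no compression is needed here at all. $F_i$ is an unknotted M\"obius band with a single half-twist, i.e.\ a kink. So for every $i$ there is a $\partial$-compressing disk for $F$ whose boundary lies in $F_i$ and is isotopic to its core, just as you observed for $i\in\{1,k\}$. That is how the paper disposes of $b_i=\pm1$, uniformly in $i$.
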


\begin{proof}
If each $b_i\geq 2$, then each $F_i$ is isotopic to a checkerboard surface from a reduced alternating link diagram and thus is essential, and so Ozawa's Theorem \ref{T:Ozawa} implies that $F$, too, is essential.

If some $b_i=\pm 1$, then there is a $\partial$-compressing disk for $F$ whose boundary lies entirely in, and is isotopic to the core curve of, $F_i$. 

Finally, suppose that some $b_i=0$, so that $F_i$ is a compressible annulus. If $i=1$ or $i=k$, or if $F_{i-1}$ and $F_{i+1}$ are plumbed onto the same side of $F_i$, then there is a compressing disk for $F$ whose boundary is the core curve of $F_i$. Assume instead that $i\neq1,k$ and that $F_{i-1}$ and $F_{i+1}$ are plumbed onto opposite sides of $F_i$ (as in Figure \ref{Fi:OneBandIsNotEnough} e.g.).  Choose a compressing disk $D$ for $F_i$ that runs along the side of $F_i$ where we will plumb on $F_{i+1}$.  Then $D$ is a compressing disk for $F_1*\cdots*F_i$. Write $\alpha$ for the arc in $F_i$ along which we will plumb $F_{i+1}$, and note that $|\alpha\pitchfork \partial D|=1$.  Then, using (the construction described in the proof of) Theorem \ref{T:compressible_attach_annulus}, replace $D$ with a compressing disk $X$ for $F_1*\cdots*F_i*F_{i+1}$.  Repeat inductively (using the second part of the theorem), either until the next plumbing factor is attached in a way that does not intersect the existing plumbing disk---in which case we keep the existing compressing disk while completing the construction of $F$---or until we have constructed all of $F$. 
\end{proof}

We also note that a spanning surface for a 2-bridge knots or link is geometrically essential if and only if it is $\pi_1$-essential. Hatcher and Thurston assert this without explicit proof in \cite{ht}. This seems like an appropriate place to provide a short proof.

\begin{cor}[Theorem 1(c) of \cite{ht}]
Suppose $F=F_{\mathbf{b}}^\mathbf{v}$ is a Hatcher-Thurston spanning surface for a 2-bridge knot or link $K$.  The following are equivalent.
\begin{enumerate}[label=(\arabic*)]
    \item\label{part:pi_1} $F$ is $\pi_1$-injective.
    \item\label{part:geom_ess} $F$ is geometrically essential.
    \item\label{part:b_ess} $\mathbf{b}$ is essential.
\end{enumerate}
\end{cor}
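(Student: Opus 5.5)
The plan is to prove the cycle of implications \ref{part:pi_1}$\Rightarrow$\ref{part:geom_ess}$\Rightarrow$\ref{part:b_ess}$\Rightarrow$\ref{part:pi_1}. Here I read \ref{part:pi_1} as $\pi_1$-essential. Since $F$ spans a 2-bridge link, it is not $\MobPos$ or $\MobNeg$ except in the trivial one-band cases, and those can be handled directly. I also read ``$\mathbf{b}$ is essential'' as the condition that every entry satisfies $|b_i|\geq 2$.

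\emph{\ref{part:pi_1}$\Rightarrow$\ref{part:geom_ess}.} This is immediate from the remark following Definition \ref{D:AlgEss}. A geometric compressing disk is in particular an algebraic one. A geometric $\partial$-compression to another spanning surface can be ruled out in two ways: either it gives a $\pi_1$-non-injective loop, or it forces $F$ to be $\MobPos$ or $\MobNeg$, which the $\pi_1$-essential condition excludes.

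\emph{\ref{part:geom_ess}$\Rightarrow$\ref{part:b_ess}.} This is the ``only if'' direction of Corollary \ref{C:compressible_attach_annulus}, which I would prove by contrapositive. If some $|b_i|=1$, there is a $\partial$-compressing disk whose boundary runs along the core of the Hopf band $F_i$. If some $b_i=0$, there are two cases. When the neighbors $F_{i\pm1}$ are plumbed on the same side of $F_i$, or $i\in\{1,k\}$, the core of $F_i$ bounds a compressing disk. Otherwise, start from a compressing disk for $F_1*\cdots*F_i$ that meets the plumbing arc of $F_{i+1}$ exactly once. Then apply Theorem \ref{T:compressible_attach_annulus} inductively along the chain $F_{i+1},F_{i+2},\dots$, using its second clause each time to keep the intersection-number-one hypothesis. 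If a factor is plumbed away from the current disk, keep the existing disk unchanged. One point needs care. Theorem \ref{T:compressible_attach_annulus} is stated for annuli, but a later factor may be a M\"obius band. In that case I would apply Theorem \ref{T:compressible_attach_mob} at that step, which yields a $\partial$-compressing disk instead; that still violates \ref{part:geom_ess}.

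\emph{\ref{part:b_ess}$\Rightarrow$\ref{part:pi_1}.} This is the step that actually fills the gap in \cite{ht}. If every $|b_i|\geq 2$, each factor $F_i$ is an unknotted band with at least two half-twists. Such a band is a checkerboard surface of the standard reduced, prime, connected alternating diagram of a $(2,b_i)$-torus link. Proposition \ref{P:CBEss} therefore gives that each $F_i$ is $\pi_1$-essential. I would then apply Ozawa's Theorem \ref{T:Ozawa} to the iterated Murasugi sum $F=(\cdots(F_1*F_2)*\cdots)*F_k$. At each stage the partial sum is $\pi_1$-essential, and the next band is plumbed along a square, which is a genuine Murasugi sum because neither side is a disk. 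By induction on $k$, $F$ is $\pi_1$-essential, hence $\pi_1$-injective.

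The main obstacle is bookkeeping rather than new topology. First, the Hatcher--Thurston notation $F_{\mathbf b}^{\mathbf v}$ has to be matched with the plumbing description $F_1*\cdots*F_k$: the vector $\mathbf v$ records on which side each plumbing occurs, and this is exactly the side information used in the $b_i=0$ case. Second, the $b_i=0$ induction has to be checked carefully when M\"obius factors occur. If a M\"obius factor stops the compressing-disk induction, the resulting $\partial$-compression must still produce a spanning surface, which is true because it is of the meridional type described in the Appendix.
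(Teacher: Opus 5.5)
Your three implications match the paper's proof. \ref{part:pi_1}$\Rightarrow$\ref{part:geom_ess} holds because $K$ is not the unknot, so $F$ is not a trivial M\"obius band and $\pi_1$-injectivity upgrades to $\pi_1$-essentiality. \ref{part:b_ess}$\Rightarrow$\ref{part:pi_1} follows from Proposition~\ref{P:CBEss} on each factor and Ozawa's Theorem~\ref{T:Ozawa} by induction. \ref{part:geom_ess}$\Rightarrow$\ref{part:b_ess} follows from Corollary~\ref{C:compressible_attach_annulus}. The paper does not re-derive that last implication; it cites the corollary together with Theorem~1(c) and Remark~1 of \cite{ht}. Your attempt to re-derive it contains a step that fails.

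The failing step is your M\"obius patch. You are right that Theorem~\ref{T:compressible_attach_annulus} covers only annuli, and the inductive argument in the proof of Corollary~\ref{C:compressible_attach_annulus} does not address M\"obius factors either. However, Theorem~\ref{T:compressible_attach_mob} only shows that the partial sum $F_1*\cdots*F_j$ is $\partial$-compressible, where $F_j$ is the M\"obius factor just added. ``That still violates \ref{part:geom_ess}'' is a claim about all of $F$. It follows only if $j=k$ or no later plumbing meets the disk. Otherwise you must carry a $\partial$-compressing disk across the next plumbing, and the construction cannot do this. It uses $n$ parallel copies of the disk, so the new boundary meets $L$ in $n$ places (one more for a M\"obius band). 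This is exactly the breakdown described in the remark after Theorem~\ref{T:compressible_attach_mob}; only Hopf bands get through (Proposition~\ref{P:compressible_attach_mob}). For example, take $\mathbf b=[4,3,0,3,4]$ with $F_2,F_4$ on opposite sides of $F_3$. Propagation in either direction reaches a three-half-twist M\"obius band followed by a band with two full twists. Nothing in the paper pushes the disk past the latter when it is plumbed onto the disk's side. So drop the patch and rest this direction on the cited result (ultimately \cite[Theorem~1(c)]{ht}), as the paper does, unless you have a genuinely new construction for M\"obius factors.

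Two smaller slips. When $|b_i|=1$, the factor $F_i$ is a one-half-twist M\"obius band, not a Hopf band. Also, the one-band trivial M\"obius case cannot be ``handled directly'': there \ref{part:pi_1} holds while \ref{part:geom_ess} fails. It is excluded only because a 2-bridge link is not the unknot.
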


\begin{proof}
The equivalence of \ref{part:geom_ess} and \ref{part:b_ess} is precisely Corollary \ref{C:compressible_attach_annulus}, or Remark 1 and Theorem 1 (c) in \cite{ht}.
Further, since $K$ is 2-bridge, it is not the unknot, and $F$ is not a trivial M\"obius band. This confirms that \ref{part:pi_1} implies \ref{part:geom_ess}. It thus remains only to prove that \ref{part:b_ess} implies \ref{part:pi_1}.

To this end, suppose that $\mathbf{b}=r+[b_1,\dots,b_k]$ is essential, and write $F=F_1*\cdots*F_k$.  Then each $F_i$ is $\pi_1$-essential, because it can be seen as a checkerboard surface for a reduced alternating diagram \cite[Lemma 3.3]{ozawa11}.  Since any Murasugi sum of $\pi_1$-essential surfaces is $\pi_1$-essential \cite[Lemma 3.4]{ozawa11}, it follows that $F$ is $\pi_1$-essential.
\end{proof}

We note that the compressing disk constructed in the proof of Corollary \ref{C:compressible_attach_annulus} could become extremely complicated. 

\begin{example}\label{Ex:432}
Suppose one constructs a Hatcher-Thurston surface $F$ from the continued fraction $[8,6,8,6,8,0,6,8,6,8,6]$ by plumbing the bands incident to the trivial annulus $F_6$ onto opposite sides of $F_6$, but otherwise plumbing each pair $F_{i-1}$ and $F_{i+1}$ to the same side of $F_i$.   Then, in the construction above, beginning with a compressing disk $D$ for $F_6$ (and thus for $F_1*\cdots*F_6$, we would take three copies of $D$ while constructing a compressing disk $X$ for $F_1*\cdots*F_7$, then we would take four copies of $X$ while constructing a compressing disk for $F_1*\cdots*F_8$, and so on.  Writing $\alpha$ for an arc in $F_6$ dual to the core curve of $F_6$ and disjoint from $F_5$ and $F_7$, and writing $Z$ for the compressing disk we ultimately construct for $F$, we would have $\partial Z\cap\alpha=  3\cdot 4\cdot 3\cdot 4\cdot 3=432$. In this example, we could have similarly constructed a compressing disk $Z'$, attached to the opposite side of $F$ and running along $F_1*\cdots*F_6$ rather than along $F_6*\cdots*F_{11}$, but with $|\partial Z'\cap\alpha|=4\cdot 3\cdot 4\cdot 3\cdot 4=576$.
\end{example}

We suspect that Theorem \ref{T:compressible_attach_annulus} can be improved to establish uniqueness of the compressing disks in the preceding example and more generally:

\begin{conjecture}\label{Conj:Band}
Let $F=F_1*F_2$, $\alpha$, and $\alpha'$ be as in Theorem \ref{T:compressible_attach_annulus}, and assume that $F_2$ has at least one full twist. If every compressing disk $D$ for $F_1$ satisfies either $\alpha\cap\partial D=\varnothing$ or $|\alpha\pitchfork\partial D|=1$, then:
\begin{enumerate}[label=(\arabic*)]
\item There is a bijection between the compressing disks for $F_1$ (up to isotopy) and those for $F$: given a compressing disk $F_1$, modify it (if necessary) as described in the proof of Theorem \ref{T:compressible_attach_annulus} to obtain a compressing disk for $F$.
\item Every compressing disk $D'$ for $F$ satisfies either $\alpha'\cap\partial D'=\varnothing$ or $|\alpha'\pitchfork\partial D'|=1$.
\end{enumerate}
All of this is true for both algebraic and geometric compressing disks.
\end{conjecture}

\begin{rem}
If Conjecture \ref{Conj:Band} is true, then repeated application of this fact proves that the two compressing disks described in Example \ref{Ex:432} are the only compressing disks for that surface $F$, up to isotopy.
\end{rem}

\section*{Appendix: geometric $\partial$-compression of spanning surfaces}

 Definition \ref{D:GeomEss} is not entirely traditional. In this Appendix, we offer further context and motivation for our definition. 
 
 First, we provide equivalent, and in some way simpler, characterizations of geometric essentiality for spanning surfaces, and we prove this equivalence.
 Next, we explain how any geometrically inessential surface can be transformed by a surgery move to a simpler spanning surface for the same link.
 Then, we compare our definition to more traditional geometric notions of incompressibility and boundary-incompressibility for properly embedded surfaces in 3-manifolds.  
 Finally, we mention several related ``Neuwirth'' conjectures, most of them still open, about the existence of properly embedded surfaces in link exteriors.

The following definition first appeared in \cite{ak}:

    \begin{definition}\label{D:geom_b_c}
Let $F$ be a spanning surface for a link $L\subset S^3$, and view $F$ as being properly embedded in the link exterior $E=S^3\setminus\mathring\nu L$, so that $\partial F\subset\partial\nu L$.
We call $F$ {\bf meridianally} {\bf  boundary compressible} if:
\begin{enumerate}[label=(\arabic*)]
\item there is an embedded disk $D\subset E$ with $\mathring{D}\cap F=\varnothing$,
\item $\partial D$ consists of two arcs, $\alpha\subset F$ and $\beta\subset\partial\nu L$, and
\item for one of the arcs $\kappa$ obtained by cutting $\partial F\subset\partial\nu L$ at the shared endpoints of $\alpha$ and $\beta$, the circle $\kappa\cup\beta$ bounds a meridian disk in $\nu L$.
\end{enumerate}
\end{definition}

\begin{prop}
Let $F$ be a geometrically incompressible spanning surface for a link $L\subset S^3$.  The following are equivalent:
\begin{enumerate}[label=(\arabic*)]
\item\label{condition:1} $F$ is meridianally boundary compressible;
\item\label{condition:2} $F$ is freely isotopic to $F'\natural\MobPos$ or $F'\natural\MobNeg$ for some surface $F'$;
\item\label{condition:3} $F$ is not geometrically essential.
\end{enumerate}
\end{prop}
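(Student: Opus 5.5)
I plan to prove the cycle \ref{condition:1}$\Rightarrow$\ref{condition:2}$\Rightarrow$\ref{condition:3}$\Rightarrow$\ref{condition:1}, using throughout the standing hypothesis that $F$ is geometrically incompressible in the sense of Definition \ref{D:GeomEss}.

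For \ref{condition:1}$\Rightarrow$\ref{condition:2}, take a meridianal boundary compressing disk $D$ with $\partial D=\alpha\cup\beta$ and the arc $\kappa\subset\partial F$ from Definition \ref{D:geom_b_c}, so that $\kappa\cup\beta$ bounds a meridian disk $\mu\subset\nu L$. I will push $\beta$ across $\mu$ to $\kappa$, producing a disk $D'$ in $S^3$ whose boundary is $\alpha\cup\kappa$. Then $\partial D'=\alpha\cup\kappa$ is a circle meeting $L$ in the arc $\kappa$. A regular neighborhood of $D'\cup\kappa$ then yields a ball $B$ meeting $F$ in a neighborhood of $\alpha\cup\kappa$. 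Because $\kappa\cup\beta$ is a meridian, the band $\nu\alpha\subset F$ makes exactly one half-twist relative to $D'$, so $F\cap B$ is an unknotted M\"obius band $\MobPos$ or $\MobNeg$ attached to the rest of $F$ along an arc. The surface $F'$ is then the complementary piece, and I will check that $F=F'\natural(F\cap B)$.

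For \ref{condition:2}$\Rightarrow$\ref{condition:3}, write $F=F'\natural M$ with $M$ a trivial M\"obius band. The cocore arc of $M$ disjoint from the summing arc cobounds, with an arc of $L$, a disk in $S^3\setminus L$ whose interior misses $F$. That arc is not $\partial$-parallel in $F$: cutting along it yields $F'$ with boundary still $L$, and condition (2) of Definition \ref{D:GeomEss} fails. (If $F'$ is a disk this is still fine, since $F$ is then a M\"obius band whose cocore is essential.)

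For \ref{condition:3}$\Rightarrow$\ref{condition:1}, $F$ fails Definition \ref{D:GeomEss}(2), since by hypothesis it satisfies (1). So there is a disk $D$ with $\partial D=\alpha\cup\beta$, $\alpha\subset F$ not $\partial$-parallel, and $\beta\subset L$. I will push $\beta$ slightly off $L$ into $\partial\nu L$ and then control the winding of the arc $\beta\subset\partial\nu L$. Its endpoints lie on $\partial F$, and after isotopy rel endpoints within the annulus $\partial\nu L\cut\partial F$ it becomes parallel to an arc $\kappa$ of $\partial F$ up to some number of meridional twists. Since $\beta$ came from an arc of $L$ and the disk is embedded with interior disjoint from $F$, $\beta$ must stay on one side locally. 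Thus $\kappa\cup\beta$ is either inessential in $\partial\nu L$ or a single meridian.

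The step I expect to be the main obstacle is excluding the inessential case. If $\kappa\cup\beta$ bounded a disk in $\partial\nu L$, I would isotope $\beta$ into $\partial F$, so that $\partial D\subset F$. Then $D$ is either a compressing disk, contradicting geometric incompressibility, or $\partial D$ bounds a disk in $F$, in which case $\alpha$ is $\partial$-parallel, contrary to assumption. So $\kappa\cup\beta$ is a meridian, giving \ref{condition:1}. I would handle this carefully with an innermost and twisting argument on $\partial\nu L$, and also with a direct check that the twisting number is $\pm1$ rather than larger, which uses the embeddedness of $D$ near $L$.
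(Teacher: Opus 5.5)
Your proposal is correct and follows the paper's proof essentially step for step. It uses the same cycle $(1)\Rightarrow(2)\Rightarrow(3)\Rightarrow(1)$: the meridian disk is glued to $D$ and a ball neighborhood splits off the trivial M\"obius band; the cocore disk of that band gives the $\partial$-compression; and for $(3)\Rightarrow(1)$, $\beta$ is pushed onto $\partial\nu L$, with incompressibility and non-$\partial$-parallelism of $\alpha$ ruling out the inessential case.

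One wording fix: inside the annulus $\partial\nu L\cut\partial F$ the only possible extra winding is longitudinal, not meridional. The paper rules it out by noting that the projection $\nu L\to L$ is injective on the pushed-off $\beta$, which is your ``$\beta$ came from an arc of $L$''. The meridional winding is automatically at most one because $\beta$ never crosses $\partial F$.
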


\begin{proof}
We will show that \ref{condition:1} $\to$ \ref{condition:2} $\to$ \ref{condition:3} $\to$ \ref{condition:1}.     

\begin{figure}[h]
    \centering
    \labellist \small
    \pinlabel {$\approx$} at 330 150
    \pinlabel {$\approx$} at 690 150
    \pinlabel {$\approx$} at 1050 150
    \pinlabel {$\to$} at 1410 150
    \endlabellist
    \includegraphics[width=\textwidth]{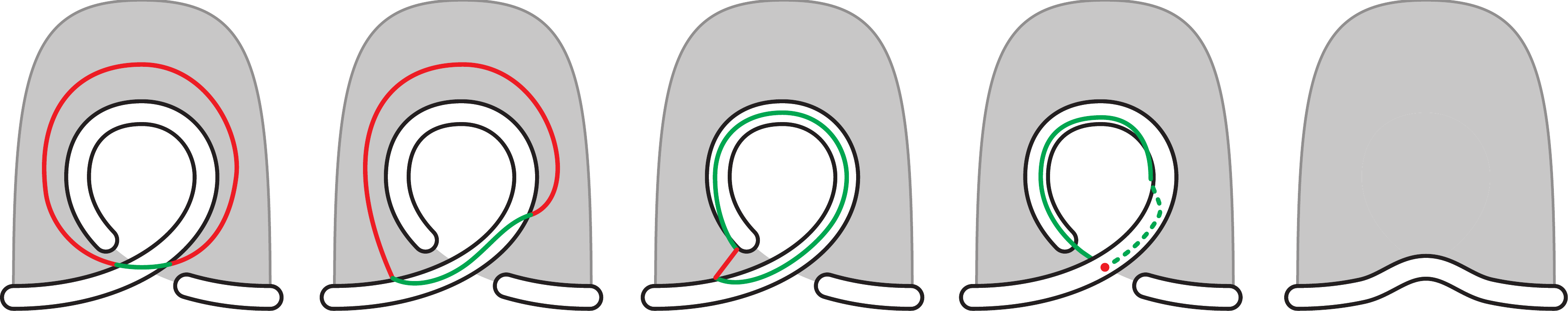}
    \caption{\label{Fi:Boundary_Compression_DeSum}meridianal boundary compression of a spanning surface is equivalent to desumming a trivial M\"obius band.}
\end{figure}

First, suppose that $F$ has a meridianal compression disk $D$, with $D$, $\alpha$, $\beta$, and $\kappa$ as described in Definition \ref{D:geom_b_c}, and let $D'$ be a meridian disk that $\beta\cup\kappa$ bounds in $\nu L$.  Then $D\cup D'$ is an embedded disk in $S^3$. Take a regular neighborhood of $B$ of this disk in $S^3$.  Then $F\cap B$ is a trivial M\"obius band, \MobPos or \MobNeg, and boundary compressing $F$ along $D$ has the same effect as de-summing this band from $F$.  See Figure \ref{Fi:Boundary_Compression_DeSum}. 

Second, suppose that $F\approx F'\natural\MobPos$ or $F\approx F'\natural\MobNeg$ for some surface $F'$.  Then one can find $D$, $\alpha$, $\beta$, and $\kappa$ as shown in Figure \ref{Fi:Unkink}.

    \begin{figure}[h]
    \centering
    \labellist\small
    \pinlabel {$\overset{\text{add negative kink}}{\longleftarrow}$} at 277 75
    \pinlabel {$\underset{\partial\text{-compress}}{\longrightarrow}$} at 277 35
    \pinlabel {$\overset{\text{add positive kink}}{\longrightarrow}$} at 595 75
    \pinlabel {$\underset{\partial\text{-compress}}{\longleftarrow}$} at 595 35
    \endlabellist
    \includegraphics[width=\textwidth]{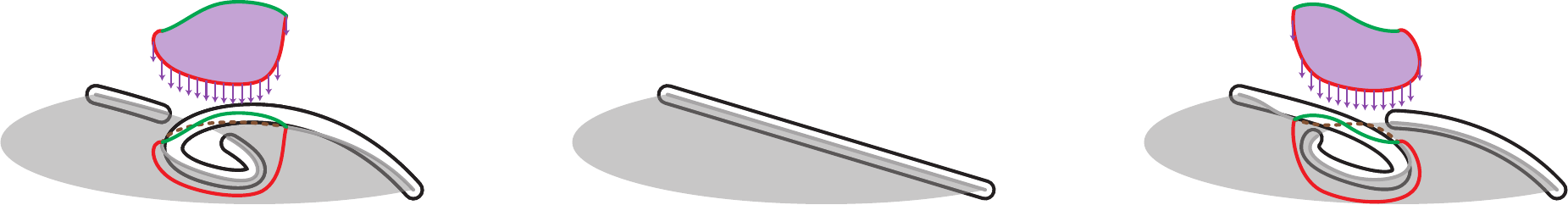}
    \caption{Arcs $\alpha$ (red), $\beta$ (blue), and $\kappa$ (green) defining a geometric $\partial$-compressing disk (purple) for a spanning surface $F$ (gray with $\partial F\setminus\kappa$ brown)}
    \label{Fi:Unkink}
    \end{figure}

Third, suppose that $F$ is geometrically inessential. Then, since $F$ is incompressible, there is a disk $D$ with $\partial D=\alpha\cup\beta$ as described in Definition \ref{D:GeomEss}.  Delete a thin regular neighborhood of $L$ and write $F^*=F\setminus\mathring{\nu}L$, $D^*=D\setminus\mathring{\nu}L$, $\beta^*=\partial D^*\cap\partial\nu L$, and $\alpha^*=\partial D^*\setminus\mathring\beta^*$.  By assumption, $F^*$ is incompressible, and $\alpha^*$ is not $\partial$-parallel in $F^*$; therefore, $\beta$ is not parallel to $\partial F^*$ through $\partial \nu L$.  Since the natural projection $\nu L\to L$ is injective on $\beta$ (rather than wrapping it more than once around in the longitudinal direction), it follows that $D^*$ is a meridianal boundary compressing disk for $F^*$, and thus that $F^*$ and $F$ (being the same surface seen from two perspectives per Remark \ref{R:Exterior}) are meridianally boundary compressible.
\end{proof}

Condition \ref{condition:2} gives a simple and intuitive description of geometric essentiality.  Namely, 
a spanning surface is geometrically essential if and only if it arises from another spanning surface via either a tubing move (the reverse of a compression move) or a kinking move (see Figure \ref{Fi:Unkink}).  

\begin{figure}[h]
    \centering
    \includegraphics[width=\textwidth]{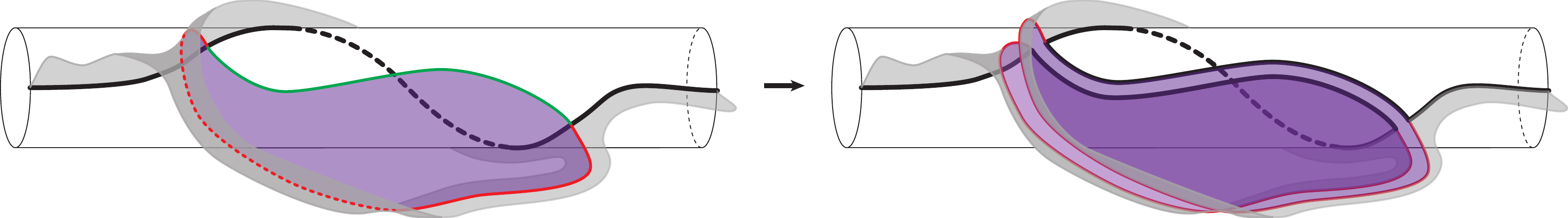}
    \caption{meridianal boundary compression of a spanning surface changes boundary slope by $\pm2$ (here by $+2$). (This image is adapted from \cite{ak}.)}
    \label{Fi:Whale}
\end{figure}

Meanwhile, condition \ref{condition:1} enables us to provide a surgery description of a geometric boundary-compression move on a spanning surface.  Namely, viewing everything in the link exterior, take a meridianal compressing disk $D$ for $F$ with $\partial D\cap F=\alpha$, cut $F$ along $\alpha$, and glue in two copies of $D$.  See Figure \ref{Fi:Whale}.  Although the entire meridianal boundary compression move $F\to F'$ is perhaps sometimes difficult to visualize, the change of $\partial F\to \partial F'$ in $\partial\nu L$ is more straightforward---see Figure \ref{Fi:Not_Bad_Boundary_Compression}.

\begin{figure}[h]
    \centering
    \includegraphics[width=\textwidth]{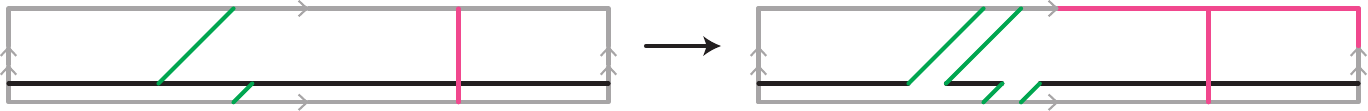}\\
    \caption{How a meridianal boundary compression along a disk $D$ changes $\partial F\subset \partial\nu L$. Color guide for the picture on the left (also in Figures \ref{Fi:Bad_Boundary_Compression1}, \ref{Fi:Bad_Boundary_Compression2}): {\color{black} $\lambda=\partial F$}\color{black}, {\color{myPink} meridian}\color{black}, {\color{myGreen} $\partial D\cap\partial\nu L$} \color{black}}
    \label{Fi:Not_Bad_Boundary_Compression}
\end{figure}

Given a meridianal boundary compression $F\to F'$, orient $L$ and $\partial F\subset \partial \nu L$  coherently, give $\kappa$ the orientation it inherits from $\partial F$, extend this orientation to the meridian $\mu=\kappa\cup\beta$, and write $\varepsilon=-\text{lk}(\mu,L)$. If $\varepsilon=-1$, then $F$ is isotopic to $F'\natural \MobPos$; otherwise, $\varepsilon=1$, and $F$ is isotopic to $F'\natural \MobNeg$. 
    
\begin{rem}\label{R:Ozawa_Rubinstein}
Traditionally, $\partial$-compression of a properly embedded surface $F$ in a manifold $M$ involves surgery along a disk $D$ whose boundary consists of an arc $\alpha\subset F$ and an arc $\beta\subset\partial M$, provided that $\alpha$ is not $\partial$-parallel in $F$.  
In particular, Ozawa and Rubinstein call a properly embedded surface in a link exterior $S^3\setminus\mathring\nu L$ {\it geometrically essential} if it admits neither a geometric compressing disk, as in Definition \ref{D:GeomEss}, nor a $\partial$-compressing disk as defined in this remark \cite{ozawa_rub}.
\end{rem}

The two examples below show, however, that $\partial$-compressing a spanning surface in this way need not yield a spanning surface, thus motivating the more restrictive definitions used throughout this paper.

\begin{figure}[h]
    \centering
    \labellist\small
    \pinlabel {{\color{myGreen} $\alpha_L$}} at 350 150
    \pinlabel {{\color{myRed} $\alpha_F$}} at 410 35
    \endlabellist
    \includegraphics[width=\textwidth]{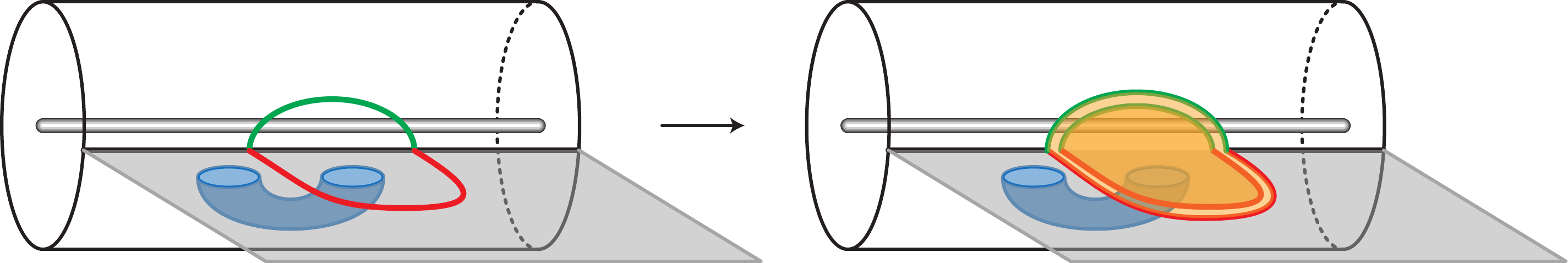}
    \caption{A boundary compression of a spanning surface need not yield a spanning surface. See Example \ref{Ex:Bad_Boundary_Compression}.}
    \label{Fi:Bad_Boundary_Compression}
\end{figure}

\begin{figure}[h]
    \centering
    \includegraphics[width=\textwidth]{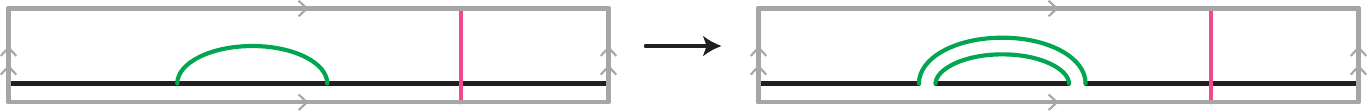}\\
    \caption{How the $\partial$-compression described in Example \ref{Ex:Bad_Boundary_Compression} changes $\partial F=\partial F'\subset \partial\nu L$}
    \label{Fi:Bad_Boundary_Compression1}
\end{figure}

\begin{example}\label{Ex:Bad_Boundary_Compression}
Let $F$ be a spanning surface for a link $L$.
View $F$ as a properly embedded surface in the link exterior $S^3\setminus\mathring{\nu}L$, so that $\partial F=\lambda$ consists of a parallel copy of $L$ on $\partial\nu L$.  See Figure \ref{Fi:Bad_Boundary_Compression}. Take an arc $\alpha\subset \lambda$ and, fixing its endpoints, perturb it slightly (i) in $\partial\nu L$ to get an arc $\alpha_L\subset\partial\nu L$ with $\alpha_L\cap\lambda=\partial\alpha_L=\partial\alpha$ and (ii) in $F$ to get an arc $\alpha_F\subset F$ with $\alpha_F\cap\lambda=\partial\alpha_F=\partial \alpha$.  The arcs $\alpha_L$ and $\alpha_F$ co-bound a disk $D$ whose interior is disjoint from $F$.  Let $D_\alpha$ denote the disk that $\alpha_F$ cuts off of $F$.  Tube $F$ along an arc $\beta$ that is disjoint from $D$ and that has one endpoint in $D_\alpha$ and the other in  $F\setminus D_\alpha$. The disk $D$ is a boundary compression disk for the resulting surface $F'$, but boundary compressing $F'$ along $D$ yields a surface that is not a spanning surface because it has a new boundary component on $\partial\nu L$, namely a trivial circle on $\partial\nu L$ near $\alpha\cup \alpha_L$. See Figures \ref{Fi:Bad_Boundary_Compression} and \ref{Fi:Bad_Boundary_Compression1}.
\end{example}

\begin{example}\label{Ex:Solomon}
Let $L$ denote the right-handed Solomon link $L_{1/4}$, and let $L'$ denote its mirror image $L_{-1/4}$.  
Figure \ref{Fi:1/4} shows an orientation-preserving homeomorphism $\phi:S^3\setminus \mathring\nu L\to S^3\setminus \mathring\nu L'$. 
(There is always an orientation-reversing homeomorphism between the exteriors of links that are mirror images, but the existence of this 
map is more surprising; how unusual is it?)
This map $\phi$ fixes meridia on one link component $L_1\to L'_1$. On the other component $L_2\to L'_2$, however, it takes a meridian $(0,1)$ to a $(-1,1)$ curve, while fixing a longitude $(1,0)$, and thus takes an arbitrary $(1,q)$ curve to a $(q+1,-q)$ curve.  Now take $F$ to be the annulus spanning $L$, whose component-wise boundary slopes are both 2, and take $F^*=F\natural \MobNeg$, where the M\"obius band is attached along $L_2$.  Viewing $F^*$ in the exterior of $L$, its boundary now consists of a $(1,0)$ curve (i.e. a longitude) on $\partial \nu L_1$ and a $(1,2)$ curve on $\partial \nu L_2$.  Write $D$ for a meridianal $\partial$-compressing disk for $F^*$.  

\begin{figure}[h]
\centering
\labellist \small
\pinlabel{$\underset{\text{isotopy}}{\longrightarrow}$} at 320 125
\pinlabel{$\overset{\text{Dehn}}{\underset{\text{twist}}{\longrightarrow}}$} at 725 125
\pinlabel{$\underset{\text{isotopy}}{\longrightarrow}$} at 1130 125
\endlabellist
\includegraphics[width=\textwidth]{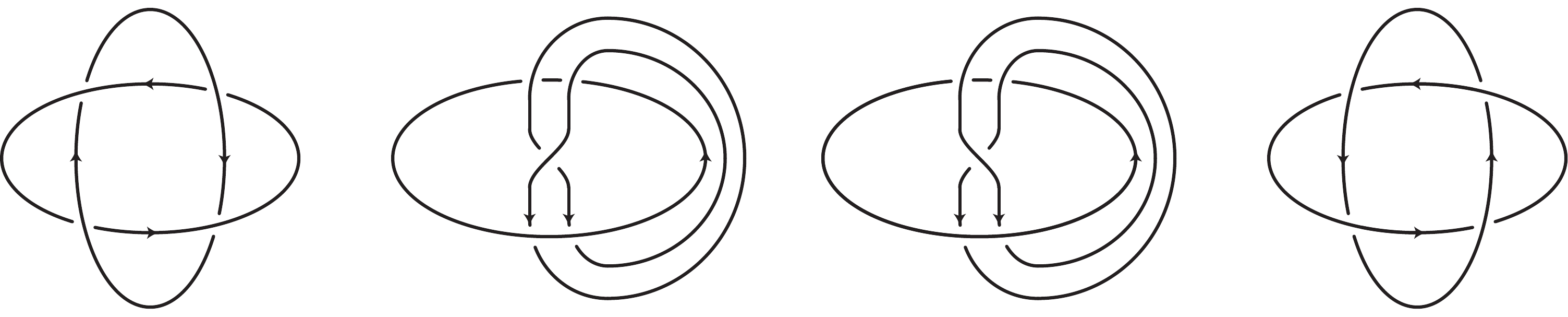}\\
\labellist \tiny
\pinlabel{$\to$} at 375 125
\endlabellist
\includegraphics[width=
3.15in]{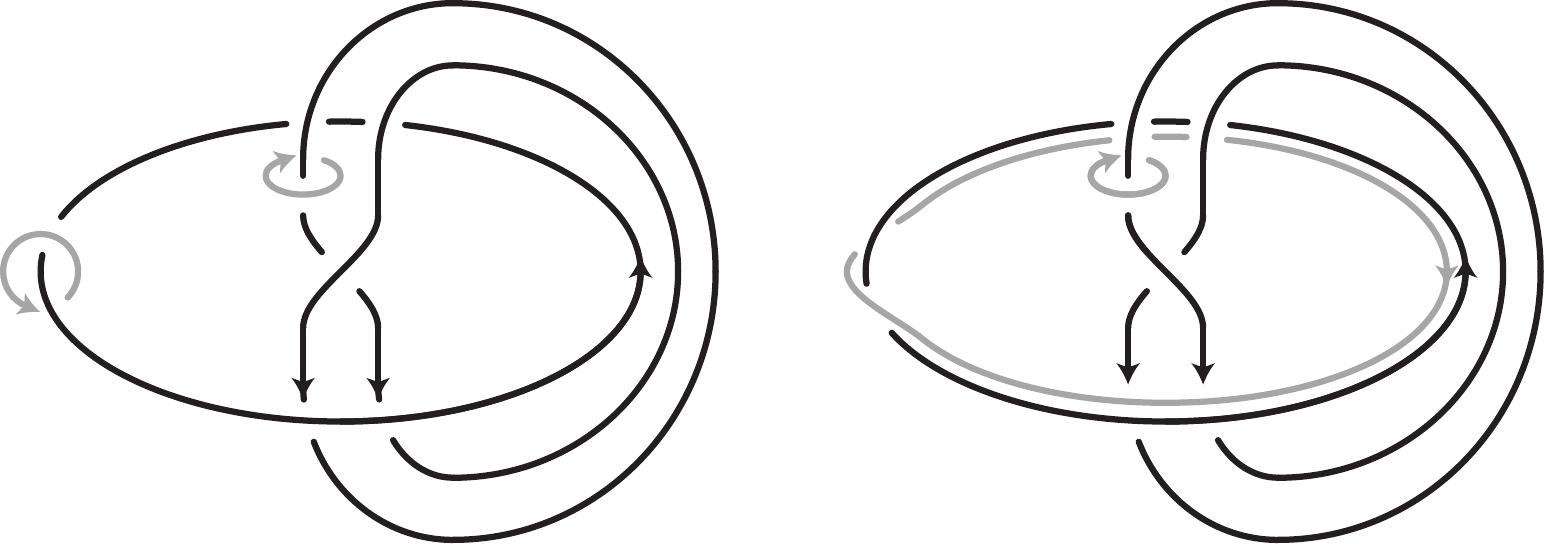}\hfill
\labellist \tiny
\pinlabel{$\to$} at 365 125
\endlabellist
\includegraphics[width=
3.15in]{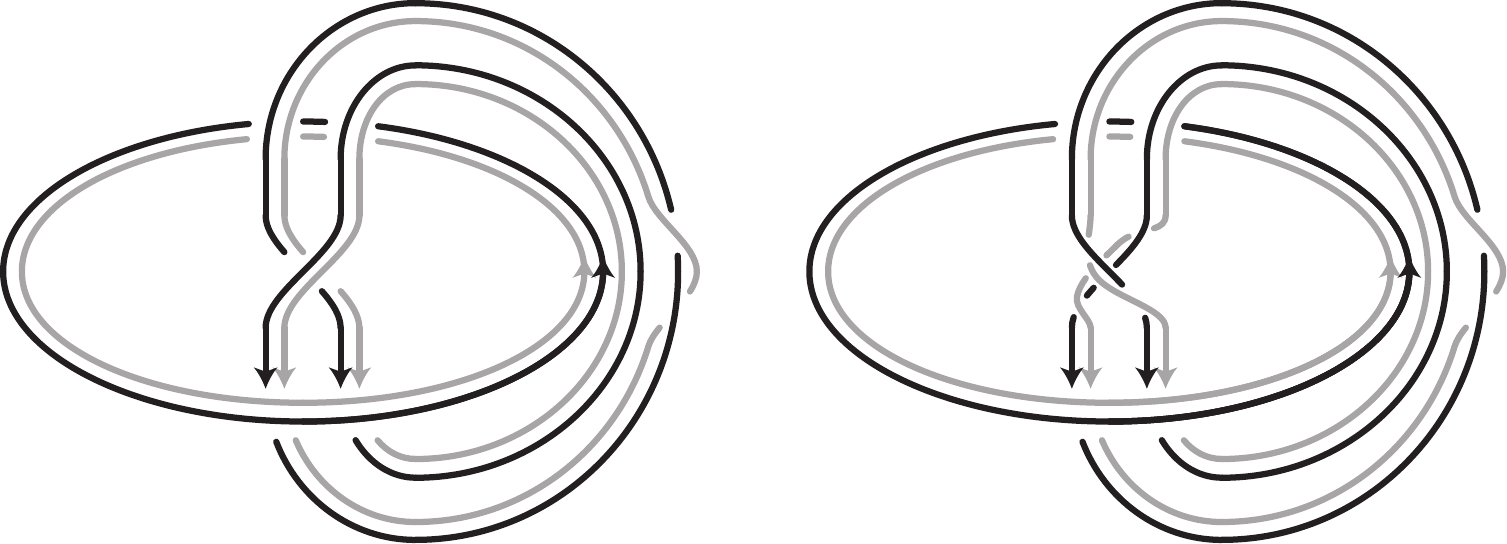}
\caption{Top: A twist along a horizontal disk (not) shown center gives a homeomorphism $\phi:S^3\setminus \mathring\nu L\to S^3\setminus \mathring\nu L'$ for $L= L_{1/4}$ and $L'= L_{-1/4}$. Bottom: Images of meridia and longitudes under $\phi$}
\label{Fi:1/4}
\end{figure}

Write $\phi(F^*)=F'$.  By the preceding discussion, its boundary consists of a $(1,0)$ curve on $\partial \nu L'_2$ and some $(1,q)$ curve on $\partial\nu L'_1$ (because $\partial F^*\cap\partial \nu L_1$ intersects each meridian in a single point and $\phi$ restricts to a homeomorphism of $\partial\nu L_1$ that fixes these meridia).  Therefore, $F'$ is a spanning surface for $L'$.  Moreover, $\phi(D)$ is a $\partial$-compressing disk, in the traditional sense, for $F'$.  Yet, applying this $\partial$-compression to $F'$ yields the surface $\phi(F)$ (because $\partial$-compressing $F^*$ along $D$ yields $F$, and $F'=\phi(F^*)$).  This provides another example of a $\partial$-compression of a spanning surface whose result is not a spanning surface.
\end{example}

We pose the following question, in search of more interesting behavior along the lines of Examples \ref{Ex:Bad_Boundary_Compression} and \ref{Ex:Solomon}.

\begin{figure}[h]
    \centering
    \includegraphics[width=\textwidth]{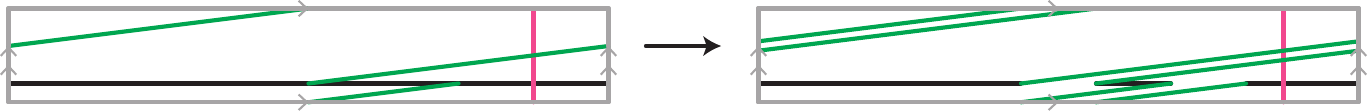}
    \caption{How the boundary compression described in Question \ref{Q:Bad_Boundary_Compression} might change $\partial F\subset \partial\nu L$}
    \label{Fi:Bad_Boundary_Compression2}
\end{figure}

\begin{question}\label{Q:Bad_Boundary_Compression}
    Is there a boundary compression $F\to F'$ of a spanning surface $F$ in a knot exterior $S^3\setminus\mathring{\nu}L$ such that $\partial F'$ is connected, but $F'$ is not a spanning surface? That is, such that $\partial F'$ is a $(p,q)$ curve on $\partial \nu L$ for some $p\neq \pm 1$?  See Figure \ref{Fi:Bad_Boundary_Compression2}.
\end{question}


Although our definition of {\it geometrically essential} differs from that of Ozawa and Rubinstein \cite{ozawa_rub}, these differences entirely reflect a difference of context. Motivated by a variety of conjectures related to the Neuwirth conjecture (see Remark \ref{R:Neuwirth}), Ozawa and Rubinstein were interested in all sorts of properly embedded surfaces in link exteriors, and so it was sensible to consider all compressing disks and $\partial$-compressing disks for such surfaces. In papers like this one specifically concerned with spanning surfaces, however, it is sensible to provide special status to surgery moves that stay within this purview.  

The thematic distinction between {\it algebraic} and {\it geometric} runs throughout this paper and \cite{essence}, and it is valuable to have a linguistic motif to specify this distinction clearly and succinctly.  This is why, for example, we use the term {\it geometrically essential} throughout, rather than the synonymous {\it meridianally essential}---a spanning surface which is not geometrically essential has either a geometric compressing disk or a geometric $\partial$-compressing disk (``geometric'' comes to be implicitly understood because of consistent linguistic choices), whereas a spanning surface which is not meridianally essential has either a geometric compressing disk or a meridianal $\partial$-compressing disk (the conditions are the same, but the cognitive load is greater).  
Nevertheless, we acknowledge the potential for confusion that can arise from using terminology in this sort of flexible way, where certain details in important definitions may differ between papers that are written with distinct contexts.  

Discussions that simultaneously consider both notions of geometric essentiality are trickier in this sense.  While this is likely to be a fairly niche concern, we suggest appropriate language in the course of the next remark.

\begin{rem}\label{R:Neuwirth}
The Neuwirth conjecture asserts that every nontrivial knot $K\subset S^3$ lies on a closed surface $F\subset S^3$ such that $K$ is nonseparating on $F$ and $F\setminus\mathring\nu L$ is $\pi_1$-essential.  While many classes of knots are known to satisfy this conjecture, its status remains open.  

There are several related conjectures, and we highlight three of them here.  Ozawa and Rubinstein posited the ``strong Neuwirth conjecture'' that every non-torus knot $K\subset S^3$ has a $\pi_1$-essential nonorientable spanning surface, but Dunfield found a counterexample in a twisted torus knot which is the closure of a 52-crossing braid on 4-strands. Dunfield's proof uses normal surface theory and computational tools.  It remains open, however, whether every non-torus knot has a nonorientable spanning surface which is geometrically essential in the {\it strong} sense of Ozawa and Rubinstein (see Remark \ref{R:Ozawa_Rubinstein}) or merely {\it as a spanning surface} (as in Definition \ref{D:GeomEss}). Ozawa and Rubinstein call the former the ``weakly strong Neuwirth conjecture" \cite{ozawa_rub}.  Below, we call the latter the ``meridianal Neuwirth conjecture.''
\end{rem}

It is notable, however, that every nontrivial torus knot $K=K_{p,q}$ has a nonorientable spanning surface $F$ which is geometrically essential as a spanning surface.  To see this, assume without loss of generality that $p<q$, and consider the minimal diagram of $K$ which has $(p-1)q$ crossings and $q$-fold rotational symmetry, and denote its all-$A$ and all-$B$ state surfaces.  One of these surfaces, $F_x$, is the unique incompressible Seifert surface for $K$ and has $\beta_1(F_x)=(p-1)(q-1)+1$ with boundary slope $0$.  The other surface $F_y$ satisfies $\beta_1(F_y)=(p-1)q=\beta_1(F_x)+p$, while its slope differs from that of $F_x$ by $2(p-1)q$, which importantly is more than $2p$. Therefore, if one performs meridinal $\partial$-compressions (and, if possible, geometric compressions) on $F_y$ until no more are possible, then the resulting spanning surface $F^*$ will be geometrically essential as a spanning surface but will still have nonzero boundary slope and will therefore be nonorientable.  On the other hand,  Ozawa and Rubinstein prove in \cite[Example 3.11]{ozawa_rub} that this surface $F^*$ is not what they call a {\it pre-essential} when $p$ and $q$ are both odd. In that case, it is not immediately clear whether or not $F^*$ is geometrically essential in their strong sense (if not, this answers Question \ref{Q:Bad_Boundary_Compression}).  With this motivation, we close with our own modified version of the Neuwirth conjecture, which finds further immediate motivation in \cite{kinking}.

\begin{conjecture}[Meridianal Neuwirth conjecture]
Every knot $K\subset S^3$ has a spanning surface which is meridianally essential, i.e. geometrically essential as a spanning surface.
\end{conjecture}

\end{document}